\documentclass[twoside]{article}

\usepackage[accepted]{aistats2019}
\usepackage[utf8]{inputenc}
\usepackage[T1]{fontenc}
\usepackage{booktabs}
\usepackage{multirow}
\usepackage{nccmath}

\usepackage{amsmath}
\usepackage{amsthm}
\usepackage{amssymb}
\usepackage{empheq}
\usepackage{xcolor, color, colortbl}
\usepackage{mdframed}
\usepackage{pifont}
\newcommand{\cmark}{\ding{51}}%
\newcommand{\xmark}{\ding{55}}%
\usepackage{enumitem}
\usepackage{graphicx} 
\usepackage{subfigure}
\usepackage{dsfont}
\usepackage{natbib}

\usepackage[toc,page]{appendix}
\usepackage{titlesec}

\usepackage[linesnumbered, ruled,vlined]{algorithm2e}
\SetKwBlock{Repeat}{repeat}{}

\usepackage{nicefrac}
\usepackage{booktabs}
\usepackage{multirow}
\usepackage{rotating}

\definecolor{mydarkblue}{rgb}{0,0.08,0.45}
\definecolor{mydarkgreen}{RGB}{39,130,67}
\newcommand{\green}{\color{mydarkgreen}}
\definecolor{mydarkred}{RGB}{192,47,25}
\newcommand{\red}{\color{mydarkred}}

\usepackage[colorlinks=true,
    linkcolor=mydarkblue,
    citecolor=mydarkblue,
    filecolor=mydarkblue,
    urlcolor=mydarkblue,
    pdfview=FitH]{hyperref}

\urlstyle{same}


\def\xx{{\boldsymbol x}}
\def\qq{{\boldsymbol q}}
\def\XX{{\boldsymbol X}}
\def\YY{{\boldsymbol Y}}
\def\ZZ{{\boldsymbol Z}}
\def\aa{{\boldsymbol a}}
\def\qq{{\boldsymbol q}}

\def\yy{{\boldsymbol y}}
\def\vv{{\boldsymbol v}}
\def\uu{{\boldsymbol u}}

\def\zz{{\boldsymbol z}}
\def\DD{{\boldsymbol D}}
\def\PP{{\boldsymbol P}}

\def\bphi{{\boldsymbol \phi}}
\def\QQ{{\boldsymbol Q}}
\def\HH{{\boldsymbol H}}
\def\balpha{{\boldsymbol \alpha}}

\def\RR{{\mathbb R}}
\def\EE{{\mathbb E}}
\newcommand{\Econd}{\mathbf{E}}
\renewcommand{\vec}{\mathbf{vec}}
\DeclareMathOperator{\prox}{\mathbf{prox}}
\def\defas{\stackrel{\text{def}}{=}}
\DeclareMathOperator*{\dom}{dom}

\DeclareMathOperator*{\Fix}{Fix}

\newcommand*\mybluebox[1]{%
\colorbox{myblue}{\hspace{1em}#1\hspace{1em}}}

\newcommand{\TOS}{\textsc{Tos}}
\newcommand{\SAGA}{\textsc{Saga}}
\newcommand{\VRTOS}{\textsc{Vr-Tos}}

\newcommand{\SVRG}{\textsc{Svrg}}
\newcommand{\ProxSVRG}{Prox\textsc{Svrg}}

\newcommand{\SGD}{\textsc{Sgd}}

\DeclareMathOperator*{\argmin}{{arg\,min}}
\DeclareMathOperator*{\minimize}{minimize}

\definecolor{myblue}{HTML}{D2E4FC}
\definecolor{Gray}{gray}{0.92}

\newmdtheoremenv{theo}{Theorem}
\newtheorem{theorem}{Theorem}
\newtheorem{corollary}{Corollary}
\newtheorem{lemma}{Lemma}
\newtheorem{definition}{Definition}

\providecommand{\customgenericname}{}
\newcommand{\newcustomtheorem}[2]{%
  \newenvironment{#1}[1]
  {%
   \renewcommand\customgenericname{#2}%
   \renewcommand\theinnercustomgeneric{##1}%
   \innercustomgeneric
  }
  {\endinnercustomgeneric}
}

\newcustomtheorem{customtheorem}{Theorem}
\newcustomtheorem{customlemma}{Lemma}

\setlength{\textfloatsep}{10pt}

\newcommand{\tablefont}[1] {{\fontsize{8}{10}\sffamily{#1}}}

\graphicspath{{./figures/}}


%
%


\begin{document}
\twocolumn[

\aistatstitle{
Proximal Splitting Meets Variance Reduction}

\aistatsauthor{ Fabian Pedregosa \And Kilian Fatras \And  Mattia Casotto }

\aistatsaddress{ ETH Z\"urich and UC Berkeley\footnotemark{} \\USA and Switzerland \And  Univ. Bretagne-Sud, \small{CNRS}, \small{IRISA}\\ \textsc{Inria} Rennes and \textsc{Obelix} \And Akur8 \\France} ]

\footnotetext[1]{Currently at Google AI, Canada}

\begin{abstract}
Despite the raise to fame of stochastic variance reduced methods like \SAGA\ and \ProxSVRG, their use in non-smooth optimization is still limited to a few simple cases. Existing methods require to compute the proximal operator of the non-smooth term at each iteration, which, for complex penalties like the total variation, overlapping group lasso or trend filtering, is an iterative process that becomes unfeasible for moderately large problems.
In this work we propose and analyze \VRTOS, a variance-reduced method to solve problems with an arbitrary number of non-smooth terms. Like other variance reduced methods, it only requires to evaluate one gradient per iteration and converges with a constant step size, and so is ideally suited for large scale applications.
Unlike existing variance reduced methods, it admits multiple non-smooth terms whose proximal operator only needs to be evaluated once per iteration.
We provide a convergence rate analysis for the proposed methods that achieves the same asymptotic rate as their full gradient variants and illustrate its computational advantage on 4 different large scale datasets.
\end{abstract}

\section{Introduction}

Stochastic variance reduced methods~\citep{roux2012stochastic,johnson2013accelerating, shalev2013stochastic} have been recently proposed as an improved alternative to the venerable stochastic gradient descent (\SGD) method~\citep{robbins1951}.
As \SGD, these methods only require to visit a small batch of random examples per iteration. This makes them ideally suited for large scale machine learning problems. Unlike \SGD, the variance of the updates decreases to zero --hence the name-- and converge with non-decreasing step sizes.

While initial stochastic variance reduced methods only considered smooth objectives, variants with support for a non-smooth term like \ProxSVRG~\citep{xiao2014proximal} and \SAGA~\citep{defazio2014saga} were soon developed. These methods are highly efficient whenever the nonsmooth part is \emph{proximal}, that is,  its proximal operator is available in closed form or at least fast to compute. This includes penalties such as the $\ell_1$ or group lasso norm, but not more complex ones like the overlapping group lasso~\citep{jacob2009group}, multidimensional total variation~\citep{barbero2014modular} or trend filtering~\citep{kim2009ell1}, to name a few.

A key observation is that many of these complex penalties can be
decomposed as a sum of proximal terms. Proximal splitting methods like the
three operator splitting \citep{davis2017three} or the Condat-V{\~u} algorithm~\citep{condat2013direct, vu2013splitting} then provide a principled
approach to incorporate these penalties into the optimization.
However, these methods require to compute the full gradient of the smooth term at each iteration, which can become costly in the context of large scale machine learning problems as it involves a full pass over the dataset. A question of key practical interest is whether these proximal splitting methods can be accelerated through the use of stochastic variance reduction techniques.

Our {\bfseries main contribution} is the development and analysis of \VRTOS, a stochastic variance reduced method that can solve problems with a sum of proximal terms.

The proposed method bridges two previously distant families of algorithms and inherit the best of both:
like the three operator splitting of \citet{davis2017three}, it can solve problems with multiple proximal terms, and like variance reduced stochastic methods its cost is independent on the number of smooth terms, converges with a fixed step size, and reaches the same asymptotic convergence rate than full gradient methods.
Furthermore, we also develop a sparse
variant of the proposed algorithm which can take advantage of
the sparsity in the input data.
The paper is organized as follows:
\begin{itemize}[leftmargin=*]
\item \emph{Method.} \S\ref{sec:methods} describes the \VRTOS\ algorithm, and extends it in \S\ref{scs:sparse} to leverage sparsity in the input data. \S\ref{scs:extension_k_terms} extends these methods to the case of an arbitrary number of proximal terms.

\item \emph{Analysis.} In \S\ref{scs:analysis} we provide a non-asymptotic convergence analysis of the proposed method. We show that, like other variance reduced methods, it converges with a fixed step size and can achieve the same asymptotic rate as the full gradient variants.

\item \emph{Experiments.} In \S\ref{scs:experiments} we compare the proposed method and related algorithms on a logistic regression problem with overlapping group lasso penalty on 4  datasets.
\end{itemize}

\subsection{Definitions and notation}
By convention, we denote vectors and vector-valued functions in lowercase boldface (e.g. $\xx$) and matrices in uppercase boldface letters (e.g. $\boldsymbol D$).
The { proximal operator } of a convex lower semicontinuous function $h$ is defined as
$  \prox_{\gamma h}(\xx) \defas \argmin_{\zz \in \RR^p} \{ h(\zz) + \frac{1}{2 \gamma}\|\xx - \zz\|^2\}$.
We say a function $f$ is $L$-smooth if it is differentiable and its gradient is $L$-Lipschitz, while it is { $\mu$-strongly convex} if $f - \frac{\mu}{2}\|\cdot\|^2$ is convex.

%
We denote by $[\,\xx\,]_b$ the $b$-th coordinate in $\xx$. This notation is overloaded so that for a collection of blocks $T = \{B_1, B_2, \ldots\}$, $[ \xx ]_T$ denotes the vector $\xx$ restricted to the coordinates in the blocks of $T$. For convenience, when $T$ consists of a single block $B$ we use $[ \xx ]_B$  as a shortcut of $[ \xx ]_{\{B\}}$.
Finally, we distinguish $\EE$, the full expectation taken with respect to all the randomness in the system, from $\Econd$, the conditional expectation with respect to the random index sampled at iteration $t$,  conditioned on all randomness up to iteration $t$.

\section{Methods}\label{sec:methods}

In this section we present our main contribution, the variance reduced three operator splitting method.
We will first consider problems with only two non-smooth terms, and generalize this formulation to an arbitrary number in \S\ref{scs:extension_k_terms}.

We consider the following optimization problem
\begin{empheq}[box=\mybluebox]{equation}\tag{OPT}\label{eq:opt_problem}
\begin{aligned}
  &\vphantom{\sum^i}\minimize_{\xx \in \RR^p}\, f(\xx) + g(\xx) + h(\xx) \,,\\
  & \vphantom{\sum_i^n}\text{ with } f(\xx) = \textstyle\frac{1}{n} \sum_{i=1}^n \psi_i(\xx) + \omega(\xx)~
  \end{aligned}
\end{empheq}
where each $\psi_i$ is convex and $L_\psi$-smooth, $\omega$ is convex and $L_\omega$-smooth and $g, h$ are \emph{proximal}, i.e., convex and we have access to their proximal operator.

This formulation allows to express a broad range of problems arising in machine learning and signal processing: the finite-sum includes common loss functions such as least squares or logistic loss; the two proximal terms $g, h$ can be extended to an arbitrary number  and include penalties such as the group lasso with overlap, total variation, $\ell_1$ trend filtering, etc. Furthermore, the proximal terms can be extended-valued, thus allowing for convex constraints through the use of the indicator function. With respect to previous work, this significantly enlarges the class of functions stochastic variance reduced methods can solve efficiently.

We allow the terms inside the finite sum to be an addition of two terms: $\psi_i$ and $\omega$. This might seem superfluous since it is not more general than the standard formulation with a single term. However, in practice $\psi_i$ (e.g., a least squares or logistic loss, see \ref{apx:implementation}) can be highly structured and allow for reduced storage schemes and/or have  sparse gradients (see \S\ref{scs:sparse}), properties which might not be shared by $\omega$, (e.g., an $\ell_2$ regularization term).

\begin{algorithm}[t]
 \KwIn{$\yy_0 \in \RR^p$, $\balpha_0 \in \RR^{n \times p}$, $\gamma > 0$}

 {\bfseries Temporary storage}: $\zz_t$, $\vv_t$ and $\xx_t$, all in $\RR^p$

 \KwResult{approximate solution to \eqref{eq:opt_problem} }

\For{$t=0, 1, \ldots $ }{

$\zz_t = \prox_{\gamma h}(\yy_t)$

Sample $i \in \{1, \ldots, n\}$ uniformly at random

$\vv_{t} = \nabla \psi_i(\zz_t) - \balpha_{i, t} + \overline{\balpha}_t + \nabla \omega(\zz_t)$

$\xx_t = \prox_{\gamma g}(2 \zz_t - \yy_t - \gamma \vv_{t})$

$\yy_{t+1} = \yy_t + \xx_t - \zz_t$

Update $\balpha_{t+1}$ according to \eqref{eq:q_memorization}\label{l:update_alpha}
}

\Return $\zz_t$

 \caption{Variance Reduced \TOS\  (\VRTOS)}\label{alg:vrtos}
\end{algorithm}

Central to our algorithm is the concept of $q$-memorization~\citep{hofmann2015variance}, which we recall below. It provides a convenient abstraction over common gradient memorization techniques like the ones in \SAGA\ and \SVRG.

\begin{definition}
A \emph{uniform $q$-memorization} algorithm selects at each iteration $t$ a random index set $J_t$ of memory terms to update according to
\begin{equation}\label{eq:q_memorization}
\balpha_{j, t+1} = \begin{cases}\nabla f_j(\zz_t) &\text{ if $j \in J_t$}\\\balpha_{j, t} &\text{ otherwise\,,}\end{cases}
\end{equation}
such that any $j$ has the same probability $q/n$ of being updated.
\end{definition}

We now introduce the variance-reduced three operator splitting (\VRTOS), a method to solve problems of the form \eqref{eq:opt_problem}. It is specified in Algorithm~\ref{alg:vrtos} and takes as input a vector of coefficients $\yy_0 \in \RR^p$, a table $\balpha_0 \in \RR^{n\times p}$ to store previous gradients and a step size $\gamma > 0$. Although in the general case this table is required to be of size $n\times p$, for linearly-parametrized loss functions like the logistic or least squares loss this can be reduced to size $n$ (\ref{apx:implementation}). Furthermore, the \SVRG-like update detailed below avoids the need for this storage at the expense of a lightly increased per iteration cost.

The proposed method performs one evaluation of each of the proximal terms and builds the gradient estimator $\vv_t$ from the table of previous gradients $\balpha_t$ and the index $i$ sampled uniformly at random. It is easy to see that $\vv_t$ is an unbiased estimate of the gradient, that is, $\Econd\,\vv_t = \nabla f(\zz_t)$.

This method allows the memory terms to be updated using any scheme that verifies the $q$-memorization framework (line \ref{l:update_alpha}). Some common schemes are:
\begin{itemize}[leftmargin=*]
\item \emph{\SAGA-like update}. At each iteration, the algorithm updates the same coefficient that has been sampled, i.e. $J_t = \{i\}$. In this scheme each memory term has probability $1/n$ of being updated, and so $q=1$.
\item \emph{\SVRG-like update}. Fix parameter $q > 0$ and draw at each iteration $r$ from a uniform distribution in the $[0, 1]$ interval. If $r < q/n$, the algorithm performs a complete update $\balpha_{j, t+1} = \nabla \psi_j(\zz_t)$ for all $j$, otherwise they are left unchanged.

Like in the \SVRG\ algorithm~\citep{johnson2013accelerating}, it is possible to avoid storing the memory terms since the $\overline\balpha_t$ is constant unless a full refresh is triggered. In this setting, only the $p$-dimensional vectors $\overline\balpha_t$ and $\widetilde{\zz}_t$ needs to be stored, where $\widetilde{\zz}_t$ is the value of $\zz_t$ last time a full refresh was triggered. This variant avoids the need to store $\balpha_t$, at the cost of a slight per iteration cost, as $\balpha_i = \nabla f_i(\widetilde{\zz}_t)$ needs to be computed at each iteration.

This memory update scheme was proposed by \citet{hofmann2015variance}, and unlike the original \SVRG\ algorithm the number of iterates between two full regresh is a random variable instead of a fixed number of iterations.
\end{itemize}

\subsection{Sparse \VRTOS}\label{scs:sparse}

\paragraph{Need for a sparse variant.} Modern web-scale optimization problems that arise in machine learning are not only large, they are also often \emph{sparse}.  For example, in the \texttt{LibSVM} datasets suite\footnote{\url{https://www.csie.ntu.edu.tw/~cjlin/libsvmtools/datasets/}}, 8 out of the 11 datasets with more than a million samples have a density below $0.01\%$, and the largest one in number of samples has a density below 1 per million.
Linearly-parametrized loss functions of the form $ \psi_i(\xx) = l_i(\aa_i^T \xx)$ have a gradient of the form $\nabla \psi_i(\xx) = \aa_i l_i'(\aa_i^T \xx)$, which inherits the same sparsity pattern as the data $\aa_i$. Since the data might be extremely sparse, it is hence of great practical interest to leverage sparsity in the partial gradients. This is the case in generalized linear models such as least squares or logistic regression, where $\aa_i$ are the rows of a data matrix.

In this subsection we assume that $g$ and $\omega$ are block separable, i.e.,
 can be decomposed block coordinate-wise as $g(\xx) = \textstyle{\sum_{B \in \mathcal{B}}} g_B([\xx]_B)$ and $\omega(\xx) = \textstyle{\sum_{B \in \mathcal{B}}} \omega_B([\xx]_B)$, where $\mathcal{B}$ is a partition of the coefficients into subsets which will call \emph{blocks} and $g_B, \omega_B$ only depends on coordinates in block $B$.
Furthermore, we will make use of the following notation:
\begin{itemize}[leftmargin=*]
\item \emph{Extended support}. We define the extended support of $\nabla \psi_i$, denoted $T_i$ as the set of blocks of $\mathcal{B}$ that intersect with its support, formally defined as $T_i \defas \{B: \text{supp}(\nabla f_i) \cap B \neq \varnothing, \,B\in\mathcal{B} \}$.
For totally separable penalties such as the $\ell_1$ norm, the blocks are individual coordinates and so the extended support covers the same coordinates as the support.

\item \emph{Reweighting constants}.
Let $\boldsymbol{P}_i$ be the projection onto the extended support, i.e., the diagonal matrix where $[\PP_i]_{B, B}$ is the identity if $B \in T_i$ and zero otherwise.
For simplicity we assume that each block appears in at least one $T_i$, as otherwise the problem can be reformulated without it.
For each block $B \in \mathcal{B}$ we define $d_B$ as the inverse frequency of that block in the extended support, i.e. $d_B = 1 / (\frac{1}{n}\sum_{i=1}^n \mathds{1}\{B \in T_i\})^{-1}$. For notational convenience we define the block-diagonal matrix $\DD$ as $[\DD]_{B, B} = d_B \boldsymbol{I}$ for each block $B \in \mathcal{B}$. Note that by definition $\frac{1}{n}\sum_{i=1}^n \PP_i = \DD^{-1}$. Computation of this diagonal matrix should be done as a preprocessing step of the algorithm.

\begin{algorithm}[t]
 \KwIn{$\yy_0 \in \RR^p$, $\balpha_0 \in \RR^{n \times p}$, $\gamma > 0$}

 {\bfseries Temporary storage}: $\zz_t$, $\vv_t$ and $\xx_t$, all in $\RR^p$

 \KwResult{approximate solution to \eqref{eq:opt_problem} }

\For{$t=0, 1, \ldots $ }{

Sample $i \in \{1, \ldots, n\}$ uniformly at random

$T_i = \text{extended support of } \nabla \psi_i$

$[\zz_t]_{T_i} = [\prox^{\DD^{-1}}_{\gamma h}(\yy_t)]_{T_i}$

$[\vv_{t}]_{T_i} = [\nabla \psi_i(\zz_t)\!-\!\balpha_{i, t} + \DD(\overline{\balpha}_t + \nabla \omega(\zz_t))]_{T_i}$

$[\xx_t]_{T_i} = [\prox_{\gamma \varphi_i}(2 \zz_t - \yy_t - \gamma \vv_{t})]_{T_i}$

$[\yy_{t+1}]_{T_i} = [\yy_t + \xx_t - \zz_t]_{T_i}$

Update $\balpha_{t+1}$ according to \eqref{eq:q_memorization}
}

\Return $\prox^{\DD^{-1}}_{\gamma h}(\yy_t)$

 \caption{Sparse \VRTOS}\label{alg:vrtos_sparse}
\end{algorithm}

\item The \emph{scaled proximal operator} is defined for a function $\varphi$, step size $\gamma > 0$, positive definite matrix $\boldsymbol{H}$ and norm $\|\cdot\|_{\HH}^2 \defas \langle \cdot, \HH\cdot \rangle$ as
\begin{equation}
  \prox^{\boldsymbol{H}}_{\gamma \varphi}(\xx) \defas \argmin_{\zz \in \RR^p}\big\{\, \varphi(\zz) + \frac{1}{2\gamma}\|\xx - \zz\|_{\boldsymbol{H}}^2\,\big\}
\end{equation}
\end{itemize}

We now have all necessary ingredients to present the sparse variant of \VRTOS. This is specified in Algorithm~\ref{alg:vrtos_sparse}. In this variant, all operations are restricted to the extended support.

The algorithm requires to compute the scaled proximal operators of $g$ and $h$. By block separability of $g$ its scaled proximal operator can be computed in block-wise as $[\prox^{\DD^{-1}}_{\gamma g}(\xx)]_{B} = [\prox_{(d_B \gamma) h}(\xx)]_B$ for all $B \in \mathcal{B}$.
Hence the cost of computing $[\xx_t]_{T_i}$ will depend on the extended support size and not on the dimensionality.

We can unfortunately not guarantee the same complexity for $[\zz_t]_{T_i}$ since we do not have a closed form for the scaled proximal operator of $h$ in general.
We review some specific cases in which it is possible to compute this scaled proximal operator in  \ref{apx:optimizing_multiple_penalties}.
Alternatively, in the next subsection we propose a reformulation that avoids the need to compute this scaled proximal operator at the expense of higher memory usage.

In the case that one proximal term is zero, the proposed algorithm with \SAGA-like update of the memory terms defaults to the Sparse \SAGA\ variant of \citet{pedregosa2017breaking}. With \SVRG-like update of the memory terms it instead yields a novel sparse variant of Prox\SVRG~\citep{xiao2014proximal}.
For both of the proposed algorithms, when input is dense, $\PP_i = \DD = \boldsymbol{I}$ and we recover Algorithm~\ref{alg:vrtos}.

\subsection{Extension to an arbitrary number of proximal terms}\label{scs:extension_k_terms}

The proposed method can be easily extended to the more general setting of an objective function with an arbitrary number of proximal terms of the form
\begin{empheq}[box=\mybluebox]{equation}\tag{OPT-$k$}\label{eq:obj_fun_k}
\begin{aligned}
    &\minimize_{\xx \in \RR^p}\, f(\xx) + \textstyle\sum_{j=1}^k g_j(\xx)\,,\nonumber \\
    &\text{ with } f(\xx) = \textstyle\frac{1}{n} \sum_{i=1}^n \psi_i(\xx) + \omega(\xx)~,
\end{aligned}
\end{empheq}
where $\psi_i$ and $\omega$ are as in \eqref{eq:opt_problem} and  $g_1, \ldots, g_k$ are proximal.
This is done by expressing the above as a problem of the form~\eqref{eq:opt_problem} in an enlarged space and then applying the proposed algorithm to this reformulation. For this, we will introduce $k$ new variables which we will constrain to be equal via an indicator function. The above problem can be written equivalently as follows,
\begin{equation*}\label{eq:obj_extended}
\min_{\XX \in \RR^{k\times p}}\,f(\overline\XX) + \underbrace{\textstyle\sum_{j=1}^k g_j(\XX_j)}_{\defas g(\XX)} + \underbrace{\imath\{\XX_1\!=\!\cdots\!=\!\XX_k\}}_{\defas h(\XX)}\,,
\end{equation*}
where we have split the original variable into $k$ variables $\XX_1, \ldots, \XX_k$ and constrained them to be equal using an indicator function in the last term.
In this formulation the first term is smooth, and the other two terms are proximal. The second term is proximal since the variables in $g_i$ are decoupled, each $g_i$ is proximal by assumption and the last term is an indicator function over a linear subspace, and hence its scaled proximal operator can be computed in closed form as follows (Lemma \ref{lemma:projection_kterms}):
\begin{align}
    &[\prox^{\boldsymbol{D}^{-1}}_{\gamma h}(\XX)]_{i, j} = \left({\textstyle\sum_{i=1}^n} a_{i, j} \XX_{i, j}\right) / \left({\textstyle\sum_{i=1}^n} a_{i, j}\right)\nonumber \\
    &\text{ with } a_{i, j} = \DD^{-1}_{i p + j, i p + j}~,
\end{align}

Hence, the problem with multiple proximal terms \eqref{eq:obj_fun_k} can be formulated as a problem with two proximal terms \eqref{eq:opt_problem} and so it is possible to apply the proposed method defined in the previous subsections.
This gives a variance reduced method  for problems with an arbitrary number of proximal term.
It is worth noting that for the sparse variants this formulation avoids the potentially difficult computation of the scaled proximal operator of $h$. 



\section{Related work}\label{scs:related_work}

{ \begin{table*}[t]
\centering
\footnotesize
\setlength\tabcolsep{5pt}\begin{tabular}{c c | c c c c |}
\cline{2-6}
\multicolumn{1}{c|}{} & \multirow{2}{*}{Methods} &
\multirow{1}{*}{incremental} & \multirow{1}{*}{non-decreasing}  & multiple non-smooth & \multirow{2}{*}{sparse updates}\\
\multicolumn{1}{c|}{} &  &\multirow{-1}{*}{updates} & \multirow{-1}{*}{step size}  & terms & \\
\cline{2-6}

\multicolumn{1}{c|}{} & {\cellcolor{Gray} \VRTOS  } & \cellcolor{Gray} & \cellcolor{Gray} & \cellcolor{Gray} & \cellcolor{Gray}   \\
\multicolumn{1}{c|}{\multirow{4}{*}}&\footnotesize(\emph{this work})
\cellcolor{Gray}&
\multirow{-2}{*}{\green\large\cmark} \cellcolor{Gray}
\cellcolor{Gray}&
\cellcolor{Gray} \multirow{-2}{*}{\green\large\cmark}  & \cellcolor{Gray} \multirow{-2}{*}{\green\large\cmark}& \cellcolor{Gray} \multirow{-2}{*}{\green\large\cmark}\\

\multicolumn{1}{c|}{}&{ \SAGA  } &
 & & &
\\
\multicolumn{1}{c|}{\multirow{4}{*}}&
\multirow{-1}{*}{\citep{defazio2014saga}}&
\multirow{-2}{*}{\green\large\cmark}
&
 \multirow{-2}{*}{\green\large\cmark} &\multirow{-2}{*}{\red\large\xmark} & \multirow{-2}{*}{{\green{\large\cmark}}\scriptsize\citep{pedregosa2017breaking}}\\

 \multicolumn{1}{c|}{\multirow{4}{*}}&\multirow{1}{*}{\cellcolor{Gray} \ProxSVRG  } &
 \cellcolor{Gray} &\cellcolor{Gray} &\cellcolor{Gray} &\cellcolor{Gray} \\
 \multicolumn{1}{c|}{\multirow{4}{*}}&
 \multirow{-1}{*}{\citep{xiao2014proximal}}\cellcolor{Gray}&
 \multirow{-2}{*}{\green{\large\cmark}} \cellcolor{Gray}
 \cellcolor{Gray}&
 \cellcolor{Gray} \multirow{-2}{*}{\green{\large\cmark}} &  \multirow{-2}{*}{\red\large\xmark} \cellcolor{Gray} & \multirow{-2}{*}{{\red\large\xmark}{\,${\dagger}$}}  \cellcolor{Gray}\\

\multicolumn{1}{c|}{\multirow{4}{*}}&\multirow{1}{*}{ \TOS  } &
 & & & \\
\multicolumn{1}{c|}{\multirow{4}{*}}&
\multirow{-1}{*}{\citep{davis2017three}}&
\multirow{-2}{*}{\red\large\xmark}
&
 \multirow{-2}{*}{\green{\large\cmark}} &  \multirow{-2}{*}{\green{\large\cmark}}& \multirow{-2}{*}{N/A}\\

 \multicolumn{1}{c|}{\multirow{4}{*}}&\multirow{1}{*}{\cellcolor{Gray} Stochastic \TOS  } &
 \cellcolor{Gray} &\cellcolor{Gray} & \cellcolor{Gray} & \cellcolor{Gray} \\
 \multicolumn{1}{c|}{\multirow{4}{*}}&
 \multirow{-1}{*}{\citep{yurtsever2016stochastic}}\cellcolor{Gray}&
 \multirow{-2}{*}{} \cellcolor{Gray}
 \cellcolor{Gray}\multirow{-2}{*}{\large\green\cmark}&
 \cellcolor{Gray} \multirow{-2}{*}{{\red\large\xmark}} & \cellcolor{Gray}\multirow{-2}{*}{\large\green\cmark} & \cellcolor{Gray} \multirow{-2}{*}{{\red\large\xmark}}\\

\cline{2-6}\vspace{-2.8ex}\\
\end{tabular}
\caption{{\bfseries Comparison with related work.} The proposed method is unique in that it combines the advantages of variance-reduced methods (incremental updates, non-decreasing step sizes and sparse updates) with the advantages of proximal splitting (support for multiple non-smooth terms). $\dagger$: a sparse variant of \ProxSVRG\ follows as a special case of Algorithm~\ref{alg:vrtos_sparse} with $h=0$ and the SVRG-like update of the memory terms.}\label{table:related_work}
\end{table*}
}

We comment on the most closely related ideas, summarized in Table \ref{table:related_work}.

Methods that support objective functions of the form \eqref{eq:opt_problem} with two or more proximal terms and a smooth term accessed via its gradient have recently been proposed. Examples are the
the primal-dual hybrid gradient method (also known as the Condat-V{\~u})~\citep{condat2013primal,vu2013splitting},\footnote{We note that this method  can optimize the more general objective function $f(\xx) + g(\xx) + h(\boldsymbol{L}\xx)$, for an arbitrary linear operator $\boldsymbol{L}$ that is fixed to the identity in our setting.}
the generalized forward-backward splitting~\citep{raguet2013generalized} or the three operator splitting~\citep{davis2017three}. Due to its excellent empirical performance and amenability to sparse updates we have chosen this last method as the basis for the proposed method. The proposed \VRTOS\ method can be seen as a generalization of this last method, as both method are identical when $n=1$.

A different stochastic variant of the three operator splitting was proposed by \citet{yurtsever2016stochastic} for the slightly more general case in which $f$ is given by an expectation. Like the proposed algorithms, this method only needs to evaluate the gradient of one element in the finite sum per iteration.
Unlike the proposed methods, the variance of the updates does not decrease to zero and requires --as other non-variance reduced method-- a decreasing step size. Furthermore, all updates are dense even in the presence of sparse gradients so the method performs poorly on large sparse problems.

\citep{balamurugan2016stochastic} proposed a variance-reduced method to solve problems a general class of saddle point problems including
$\min_\xx\max_\uu \frac{1}{n}\sum_{i=1}^n f_i(\xx) + M(\xx, \uu)$,
where $M(\cdot)$ is proximal. With $M(\xx, \uu) = g(\xx) + \langle \xx, \uu\rangle - h^*(\uu)$, this is equivalent to the problem in \eqref{eq:opt_problem}. However, the method requires $M$ to be strongly concave in $\uu$, which is equivalent to $h$ being smooth, and so is not applicable to the same class of problems as the proposed method. We note that this requirement is not merely an artifact of the theory, as the algorithm requires knowledge of this smoothness parameter.

Stochastic variance-reduced variants of ADMM have also been recently proposed, see e.g. \citep{zheng2016stochastic, yu2017fast}. Compared to the proposed methods, none of the existing variants support sparse updates and require tuning more than one step-size parameter.

\section{Analysis}\label{scs:analysis}

In this section we provide a non-asymptotic convergence rate analysis for the proposed method:
\vspace{-0.5em}\begin{itemize}[leftmargin=*]
\item All the proposed variants converge with a step size $1/(3L_f)$, with $L_f \defas L_\psi + d_{\max}L_\omega$, where $d_{\max}$ is the maximum element in the diagonal matrix $\DD$ ($d_{\max} = 1$ for non-sparse variants).
\item For \VRTOS\ (Algorithm~\ref{alg:vrtos}) we obtain convergence rates that asymptotically match those of the full-gradient variant, i.e., $\mathcal{O}(1/t)$ convergence rate for convex problems (Theorem~\ref{thm:sublinear_convergence}) and a linear convergence rate under strong convexity of $f$ and smoothness of $h$ (Theorem~\ref{thm:linear_convergence}).
\item For the sparse variant, Sparse \VRTOS\ (Algorithm~\ref{alg:vrtos_sparse}), we obtain a linear convergence rate under the same assumptions (Theorem~\ref{thm:linear_convergence}). However, for general convex objectives we could only obtain a worse $\mathcal{O}(1/\sqrt{t})$ convergence rate (Theorem~\ref{thm:sublinear_convergence_sparse}).
\end{itemize}

In this section we will use the following {\bfseries extra notation}.
We define the following primal ($\mathcal{P}$), and dual function ($\mathcal{D}$) as:
\begin{align}
    &\mathcal{P}(\xx) \defas f(\xx) + g(\xx) + h(\xx)~,\nonumber \\
    &\mathcal{D}(\uu) \defas (f + g)^*(-\uu) + h^*(\uu)~,\label{eq:primal_dual_loss}
\end{align}
where $^*$ denotes the Fenchel conjugate. We denote by $\xx^\star$ an arbitrary minimizer of the primal objective and define the ``dual iterate'' $\uu_t \defas \DD^{-1}(\yy_t - \zz_t) / \gamma$ ($\DD = \boldsymbol{I}$ for the dense variants).
We also define the following generalized three operator splitting operator:
\begin{align}\label{eq:operator}
    &\boldsymbol{G}_{\gamma}(\yy) \defas \yy  -  \zz_\yy + \prox^{\DD^{-1}}_{\gamma g}(2\zz_\yy - \yy - \gamma \DD\nabla f(\zz_\yy))\,,\nonumber \\
    &~\text{with $\zz_\yy = \prox^{\DD^{-1}}_{\gamma h}(\yy)$}~,
\end{align}

and its set of fixed points, which we denote  $\Fix(\boldsymbol{G}_\gamma)$.
Another quantity that will appear often in the analysis is $H_0 \defas {1}/{(2  n L_f)}\sum_{i=1}^n \|\balpha_{i, 0} - \psi_{i}(\xx^\star)\|^2$.

Throughout this section we make the following two technical assumptions:
\vspace{-0.5em}\paragraph{Assumption 1: Regularity.} We assume each $\psi_i$ is $L_\psi$-smooth, $\omega$ is $L_{\omega}$-smooth, $g$ and $h$ are proper (i.e., have nonempty domain), lower semicontinuous (i.e., its sublevel sets are closed) convex functions. We recall that lower semicontinuity is a weak form of continuity that allows extended-valued functions with domain over a closed set.

{\bfseries Assumption 2: Qualification conditions.} We assume the relative interior of $\dom g$ and $\dom h$ have a non-empty intersection.
This is a very weak and standard assumption, which allows to rule out pathological cases such as disjoint domains and allows to relate the primal and dual optimal objective (see e.g.\citep[Proposition 15.13]{bauschke2017convex} or \citep[Proposition 5.3.8]{bertsekas2015convex}), a property sometimes referred to as strong or total duality.

{\bfseries Sublinear convergence}.
The following theorem shows a $\mathcal{O}(1/t)$ convergence rate for \VRTOS\ on arbitrary convex objectives.

One of the issues when analyzing the convergence of the three operator splitting is that the objective function might be $+\infty$, for example when both proximal terms are an indicator function.
Following \citet{chambolle2016ergodic,pedregosa2018adaptive}, we will state the convergence rate for general functions in terms of the \emph{saddle point suboptimality}, defined as
\begin{align}
    &\mathcal{L}(\widetilde\xx, \uu) - \mathcal{L}(\xx, \widetilde\uu)\,,~\text{ with }~\nonumber \\
    &\mathcal{L}(\xx, \uu) \defas f(\xx) + g(\xx)+  \langle \xx, \uu\rangle - h^*(\uu)~,
\end{align}
where $\mathcal{L}$ is the Lagrangian associated with $\mathcal{P}$ and $\mathcal{D}$. As \citet{davis2017three}, we will also state convergence rates in terms of the objective suboptimality under a Lipschitz assumption on $h$ in \eqref{eq:sub_copnvergence_obj}.

\begin{theorem}\label{thm:sublinear_convergence}
Let $\overline{\xx}_t$ denote the averaged (also known as ergodic) iterate, i.e., $\overline{\xx}_t = {(\sum_{k=0}^t\xx_k )/(t+1)}$ and $\overline{\uu}_t = (\sum_{k=0}^t\uu_k )/(t+1)$. Then the \VRTOS\ method (Algorithm~\ref{alg:vrtos}) converges for any step size $\gamma \leq 1 / (3L_f)$, and for $\gamma = 1/ (3L_f)$ we have the following bound for all $(\xx, \uu) \in \dom g \times \dom h^*$:
\begin{equation}\label{eq:sub_copnvergence_saddle}
\EE\left[\mathcal{L}(\overline\xx_t, \uu) - \mathcal{L}(\xx, \overline\uu_t)\right] \leq \frac{10 n }{q(t+1)}C_0,
\end{equation}
with $\yy = \xx + \gamma \uu$, $\yy^\star \in \Fix(\boldsymbol{G}_\gamma)$, and $C_0 = \left[\frac{3 L_{f}q}{20 n}\|\yy_0 - \yy\|^2 + \frac{3 L_{f}q}{2n}\|\yy_0 - \yy^\star\|^2 + H_0  \right]~$, where we recall $H_0 = {1}/{(2  n L_f)}\sum_{i=1}^n \|\balpha_{i, 0} - \psi_{i}(\xx^\star)\|^2$.

Furthermore, if $h$ is $\beta_h$-Lipschitz we have the following rate in terms of the primal objective:
\begin{equation}\label{eq:sub_copnvergence_obj}
    \mathcal{P}(\overline{\xx}_t) - \mathcal{P}(\xx^\star) \leq \frac{10 n }{q(t+1)}\widetilde{C}_0 ~,
\end{equation}
with $\widetilde{C}_0 = \frac{6 L_{f}q}{20 n}\|\zz_0 - \xx^\star\|^2 + \frac{3 L_{f}q}{2n}\|\yy_0 - \yy^\star\|^2 + \frac{q}{15 n L_f}\beta_h^2 + H_0$.
\end{theorem}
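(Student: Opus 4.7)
The plan is to analyze \VRTOS\ through a saddle-point Lyapunov argument that merges the fixed-point contraction analysis of three-operator splitting with the SAGA-style variance-reduction bookkeeping. First I would use the prox definitions to extract the subgradient inclusions that underlie the algorithm. From $\zz_t = \prox_{\gamma h}(\yy_t)$ we get $\uu_t = (\yy_t - \zz_t)/\gamma \in \partial h(\zz_t)$, so $h^*(\uu_t) = \langle \zz_t, \uu_t\rangle - h(\zz_t)$. From $\xx_t = \prox_{\gamma g}(2\zz_t - \yy_t - \gamma \vv_t)$ we get $\ww_t \defas (2\zz_t - \yy_t - \gamma \vv_t - \xx_t)/\gamma \in \partial g(\xx_t)$. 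Plugging these into convexity inequalities for $g$, $h^*$, and $f$ (the latter using $L_f$-smoothness evaluated at $\zz_t$), I can bound the per-iteration Lagrangian gap $\mathcal{L}(\xx_t,\uu) - \mathcal{L}(\xx,\uu_t)$ by an expression of the form $\langle \ww_t + \uu_t, \xx_t - \xx\rangle + \langle \vv_t - \nabla f(\zz_t), \xx - \xx_t\rangle + R_t$, where $R_t$ collects the usual smoothness/convexity residuals at $(\xx_t,\zz_t,\xx^\star)$.

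Second, I would apply the standard ``three-operator splitting'' telescoping identity. Using $\yy_{t+1} = \yy_t + \xx_t - \zz_t$ together with the definition of $\ww_t$, one obtains an exact polarization identity of the form
\begin{equation*}
2\gamma\langle \ww_t + \uu_t,\, \xx_t - \xx\rangle = \|\yy_t - \yy\|^2 - \|\yy_{t+1} - \yy\|^2 - \|\xx_t - \zz_t\|^2 + 2\gamma\langle \vv_t - \nabla f(\zz_t), \xx_t - \xx\rangle,
\end{equation*}
valid for any $\yy = \xx + \gamma \uu$. This is where the two distinct reference points in $C_0$ appear: the $\|\yy_t - \yy\|^2$ term provides the dual gap, while tracking $\|\yy_t - \yy^\star\|^2$ against a fixed point $\yy^\star \in \Fix(\boldsymbol{G}_\gamma)$ is needed to control the noise term, because only the latter benefits from the firm nonexpansiveness of $\boldsymbol{G}_\gamma$ at $\zz_t$.

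Third, I would handle the variance-reduction bookkeeping. Taking conditional expectation, $\Econd \vv_t = \nabla f(\zz_t)$, so the inner product $\langle \vv_t - \nabla f(\zz_t), \xx - \xx_t\rangle$ needs to be split: one half cancels by unbiasedness against any $\mathcal{F}_t$-measurable vector (in particular $\xx$ and $\zz_t$), and the remaining $\langle \vv_t - \nabla f(\zz_t), \zz_t - \xx_t\rangle$ is controlled by Young's inequality, producing a term proportional to $\Econd \|\vv_t - \nabla f(\zz_t)\|^2$. I would then invoke the SAGA-type variance bound
\begin{equation*}
\Econd\|\vv_t - \nabla f(\zz_t)\|^2 \leq 2L_\psi \bigl(f(\zz_t) - f(\xx^\star) - \langle \nabla f(\xx^\star), \zz_t - \xx^\star\rangle\bigr) + 2 L_f^2\, H_t,
\end{equation*}
together with the recursion $\Econd H_{t+1} = (1 - q/n) H_t + \tfrac{q}{n}\cdot \tfrac{1}{2nL_f}\sum_i \|\nabla\psi_i(\zz_t)-\nabla\psi_i(\xx^\star)\|^2$, which holds for any $q$-memorization scheme. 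The Bregman term in the variance bound is absorbed into the $L_f$-smoothness residual $R_t$ precisely when $\gamma \leq 1/(3L_f)$, which is where the step-size constraint emerges.

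Finally, I would assemble everything into a Lyapunov function of the form $\Phi_t = \|\yy_t - \yy\|^2 + c_1 \|\yy_t - \yy^\star\|^2 + c_2 H_t$, with $c_1, c_2$ tuned so that the $H_t$-recursion and the variance remainder cancel exactly, yielding $\Econd \Phi_{t+1} \leq \Phi_t - 2\gamma\bigl(\mathcal{L}(\xx_t,\uu) - \mathcal{L}(\xx, \uu_t)\bigr)$. Taking full expectation, telescoping for $k=0,\ldots,t$, dividing by $t+1$, and applying Jensen's inequality (the Lagrangian is convex in its first argument and concave in its second) yields \eqref{eq:sub_copnvergence_saddle} with constant $C_0$ as stated, once $\gamma = 1/(3L_f)$ is substituted. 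For the primal bound \eqref{eq:sub_copnvergence_obj}, I would take the supremum over $\uu \in \dom h^*$ on the left-hand side; since $h$ is $\beta_h$-Lipschitz, $\dom h^* \subseteq \{\uu : \|\uu\| \leq \beta_h\}$, and maximizing the quadratic $c_1\|\yy_0 - (\xx^\star + \gamma\uu)\|^2$ over this ball produces the additional $\frac{q}{15 n L_f}\beta_h^2$ term in $\widetilde C_0$ after using $(a+b)^2 \leq 2a^2 + 2b^2$ to split $\|\yy_0 - \xx^\star - \gamma\uu\|^2$.

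The main obstacle will be the third step: tuning the Young-inequality constants and the Lyapunov weights $c_1, c_2$ so that (i) the $H_t$ replenishment term is exactly neutralized, (ii) the smoothness residual absorbs the Bregman part of the variance bound under $\gamma \leq 1/(3L_f)$, and (iii) the leading coefficient on the gap in the one-step inequality stays positive and clean enough to produce the $10n/q$ prefactor. Everything else is essentially standard three-operator-splitting algebra combined with the $q$-memorization identity.
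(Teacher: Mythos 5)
Your proposal follows essentially the same route as the paper: the same subgradient inclusions from the two prox steps, the same polarization identity producing the telescoping in $\|\yy_t - \yy\|^2$, the same three-term Lyapunov function $\|\yy_t-\yy\|^2 + c_1\|\yy_t-\yy^\star\|^2 + c_2 H_t$ (the paper's $W_t = \lambda\|\yy_t-\yy\|^2 + V_t$), the same telescoping-plus-Jensen conclusion, and the same use of $\dom h^* \subseteq \{\|\uu\|\le\beta_h\}$ with $\|\yy_0-\yy\|^2 \le 2\|\zz_0-\xx\|^2 + 4\gamma^2\beta_h^2$ for the primal bound. The one substantive divergence is your treatment of the zero-mean cross term $\langle \vv_t - \nabla f(\zz_t), \xx_t - \xx\rangle$: you split at $\zz_t$ and apply Young's inequality, which leaves a residual $\tfrac{\epsilon}{2}\|\xx_t-\zz_t\|^2$ that must be absorbed into the negative $(\tfrac{L_f}{2}-\tfrac{1}{2\gamma})\|\xx_t-\zz_t\|^2$ smoothness residual --- feasible for $\gamma\le 1/(3L_f)$ but only with $\epsilon \le \tfrac{1}{\gamma}-L_f$, and it perturbs the final constants; the paper instead splits at the virtual full-gradient iterate $\widetilde\xx_t = \prox_{\gamma g}(2\zz_t-\yy_t-\gamma\nabla f(\zz_t))$ (which is measurable with respect to the past, so the second piece vanishes in expectation) and bounds the first piece by Cauchy--Schwarz plus nonexpansiveness of the prox, yielding a clean $\gamma\,\Econd\|\vv_t-\nabla f(\zz_t)\|^2$ with no leftover $\|\xx_t-\zz_t\|^2$, which is what makes the stated constants $10n/q$ and $C_0$ come out exactly. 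Your variance bound as written also has a bookkeeping slip ($2L_f^2 H_t$ should be $2(1+\eta^{-1})L_f H_t$ given the paper's normalization of $H_t$), but that is a sketch-level typo rather than a gap in the argument.
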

The previous theorem gives a $\mathcal{O}(1/t)$ convergence rate in terms of the saddle point suboptimality for arbitrary convex functions and $\mathcal{O}(1/t)$ rate in function suboptimality under a Lipschitz assumption on $h$, matching the strongest bounds of \SAGA~\citep{defazio2014saga}.

For their sparse variants, however, we have only been able to prove a slower $\mathcal{O}(1/\sqrt{t})$ rate on the operator residual, despite the fact that in practice the algorithm exhibits a much faster empirical convergence (see \S \ref{scs:experiments}). \ref{apx:fixed_point_characterization} contains a characterization of the fixed points of this operator that justifies why this is a meaningful suboptimality criterion for \eqref{eq:opt_problem}. Although there is no direct correspondence between rates on the gradient and on objective values, lower bounds are asymptotically equivalent~\citep{nesterov2012make}.
\newcommand{\TheoremSubSparse}{
Sparse \VRTOS\ (Algorithm~\ref{alg:vrtos_sparse}) converges for every step size ${\gamma \leq 1/(3L_f)}$. In particular, for $\gamma = {1}/{(3L_f)}$ and $\yy_t$ obtained after $t \geq 1$ updates we have the bound
  \begin{equation}
\min_{k=0, \ldots, t}\left\{ \EE \|\yy_k - \boldsymbol{G}_\gamma(\yy_k)\| \right\} \leq \sqrt{ \frac{C_0}{L q (t+1)}}  = \mathcal{O}\left(\frac{1}{\sqrt{t}}\right),
\end{equation}
with $C_0= \frac{5 d_{\max}n}{{Lq(t+1)}} \left[({2Lq}/{n})\|\yy_0 - \yy^\star\|^2 + H_0\right]$.
}
\begin{theorem}\label{thm:sublinear_convergence_sparse}
\TheoremSubSparse
\end{theorem}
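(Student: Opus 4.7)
\textbf{Proof plan for Theorem \ref{thm:sublinear_convergence_sparse}.} My approach mirrors standard variance-reduction analyses (as in \citet{defazio2014saga,hofmann2015variance}) but adapted to the sparse three-operator setting. The natural inner product here is $\langle \cdot, \DD^{-1}\cdot\rangle$, since this is the metric used by the scaled proximal operators in Algorithm~\ref{alg:vrtos_sparse}. I will work with a Lyapunov function
\begin{equation*}
\Phi_t \;\defas\; \|\yy_t - \yy^\star\|_{\DD^{-1}}^2 \;+\; c\, H_t, \qquad H_t \defas \frac{1}{2 n L_f}\sum_{i=1}^n \|\balpha_{i,t} - \nabla\psi_i(\xx^\star)\|^2,
\end{equation*}
for an appropriately chosen constant $c>0$, where $\yy^\star\in\Fix(\boldsymbol{G}_\gamma)$ and $\xx^\star = \prox_{\gamma h}^{\DD^{-1}}(\yy^\star)$. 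The first step is to unroll one iteration: because the update $\yy_{t+1}-\yy_t = \xx_t - \zz_t$ is supported on $T_i$, and because $\PP_i$ commutes with $\DD$ block-wise, one gets $\|\yy_{t+1}-\yy^\star\|_{\DD^{-1}}^2 = \|\yy_t - \yy^\star + \PP_i(\xx_t-\zz_t)\|_{\DD^{-1}}^2$. Taking the conditional expectation over $i$ and using the identity $\frac{1}{n}\sum_i\PP_i = \DD^{-1}$ will collapse the cross term into a "dense" residual identical to the one that drives $\boldsymbol{G}_\gamma$.

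The second step is an operator/variance inequality. The gradient estimator satisfies $\Econd[\DD\PP_i \vv_t]=\nabla f(\zz_t)$ when interpreted correctly against the sparse support, and its variance decomposes into a smoothness term and a memory-error term $H_t$; this is the same mechanism as in \citet{pedregosa2017breaking}, now applied to the third operator. Combining this with the firm nonexpansiveness of $\prox^{\DD^{-1}}_{\gamma g}$ and $\prox^{\DD^{-1}}_{\gamma h}$ (in the $\DD^{-1}$ metric) yields, after standard algebraic manipulations and cocoercivity of $\nabla f$ with constant $L_f = L_\psi + d_{\max}L_\omega$, a one-step recursion of the form
\begin{equation*}
\Econd[\Phi_{t+1}] \;\leq\; \Phi_t \;-\; \rho\, \|\yy_t - \boldsymbol{G}_\gamma(\yy_t)\|^2,
\end{equation*}
for some $\rho>0$ proportional to $q/n$, provided $\gamma\leq 1/(3L_f)$. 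The coefficient $c$ in the Lyapunov function is picked so that the negative term produced by the $q$-memorization update cancels the positive memory-error term appearing in the variance bound, exactly as in the standard \SAGA/\SVRG\ proofs.

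The third step is to take the full expectation, telescope from $k=0$ to $t$, and use non-negativity of $\Phi_{t+1}$ to obtain
\begin{equation*}
\rho \sum_{k=0}^t \EE\,\|\yy_k-\boldsymbol{G}_\gamma(\yy_k)\|^2 \;\leq\; \Phi_0 \;=\; \|\yy_0-\yy^\star\|_{\DD^{-1}}^2 + c H_0.
\end{equation*}
Bounding $\|\cdot\|_{\DD^{-1}}^2 \leq d_{\max}\|\cdot\|^2$ recovers the constant $C_0$ announced in the statement. Taking the minimum over $k$ and applying Jensen's inequality $\bigl(\min_k \EE X_k\bigr)^2 \leq \min_k \EE X_k^2$ gives the $\mathcal{O}(1/\sqrt{t})$ bound on $\EE\|\yy_k-\boldsymbol{G}_\gamma(\yy_k)\|$.

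\textbf{Main obstacle.} The principal difficulty is the second step: all the sparse projections $\PP_i$, the weighting $\DD$, and the scaled proximal operators must be juggled in the same inner product so that the firm nonexpansiveness identities still apply and so that the cross terms between $\nabla\omega(\zz_t)$ (reweighted by $\DD$) and the sparse memory average $\overline\balpha_t$ telescope correctly in expectation. This is what prevents me from obtaining the stronger $\mathcal{O}(1/t)$ rate of Theorem~\ref{thm:sublinear_convergence}: unlike the dense case, the one-step inequality only produces a negative term in $\|\yy_t-\boldsymbol{G}_\gamma(\yy_k)\|^2$ rather than a saddle-point gap, which forces us to telescope a squared residual and therefore to pay a square root when extracting a rate.
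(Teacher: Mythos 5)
Your plan follows essentially the same route as the paper's proof: a Lyapunov function combining the distance to a fixed point of $\boldsymbol{G}_\gamma$ with the memory-error term $H_t$, a one-step decrease whose negative contribution is the squared (projected) operator residual rather than a saddle-point gap, absorption of the variance terms via the $q$-memorization decrease of $H_t$, a $d_{\max}$ factor from passing between the $\DD^{-1}$ and Euclidean norms, and a final telescope plus minimum plus square root. The only differences are cosmetic (you place the constant on $H_t$ and measure the iterate distance in the $\DD^{-1}$ metric, whereas the paper uses the Euclidean norm and lets $\Econd \PP_i = \DD^{-1}$ generate the weighting; also the unbiasedness identity is $\Econd[\PP_i\vv_t]=\nabla f(\zz_t)$, without the extra $\DD$), and your diagnosis of why only $\mathcal{O}(1/\sqrt{t})$ is obtained matches the paper's.
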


{\bfseries Linear convergence.}
The three operator splitting has been shown to have a linear convergence rate under the assumption of strong convexity of the smooth term and smoothness of one of the proximal terms~\citep[\S4.4]{davis2015three}. Although this last condition is rarely verified in practice since its main application is on non-smooth proximal terms, it is instructive to see that the proposed method --despite the reduced cost per iteration-- also enjoys a linear convergence rate under the same assumptions.

\newcommand{\TheoremLinear}{
Let $\psi_i$ be $\mu_\psi$-strongly convex and $\omega$ be $\mu_{\omega}$-strongly convex, where $\mu_\psi + \mu_\omega > 0$. Furthermore, let $h$ be $L_h$-smooth.
Then for any step size $\gamma \leq {1}/{(3 L_f)}$, all the proposed methods converge geometrically in expectation. For $\gamma = {1}/{(3 L_f)}$, we have the following bound for Algorithm \ref{alg:vrtos} ($d_{\max{}}=1$ in this case) and Algorithm~\ref{alg:vrtos_sparse}:
  \begin{equation}
\EE \|\zz_{t+1} - \xx^\star\|^2 \leq \left(1 - \min \Big \{ \frac{q}{4n}, \frac{1}{3 d^3_{\max}\delta^2 \kappa} \Big \}\right)^t  D_0\quad,
\end{equation}
with $D_0 \defas {d_{\max}}\left[\frac{q}{2\gamma(1 - \gamma\mu)n} \|\yy_0 - \yy^\star\|^2 + H_0\right]$,  $\delta = (1 + L_h/(3L_f))$, $\kappa = L_f/\mu$ and $\yy^\star \in \Fix(\boldsymbol{G}_\gamma)$.
}
\begin{theorem}[Linear convergence]\label{thm:linear_convergence}
\TheoremLinear
\end{theorem}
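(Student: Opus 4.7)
The plan is to build a Lyapunov function combining distance of the primal iterate $\yy_t$ to a fixed point $\yy^\star$ of $\boldsymbol{G}_\gamma$ together with the memory discrepancy $H_t \defas \frac{1}{2nL_f}\sum_{i=1}^n\|\balpha_{i,t}-\nabla\psi_i(\xx^\star)\|^2$, and show it contracts geometrically in expectation. Concretely, set
\[
\mathcal{V}_t \;\defas\; \tfrac{c_1}{\gamma}\|\yy_t-\yy^\star\|_{\DD^{-1}}^2 + c_2\, H_t,
\]
with constants $c_1,c_2>0$ to be tuned. The target recursion is $\EE\,\mathcal{V}_{t+1}\le (1-\rho)\,\mathcal{V}_t$ for $\rho=\min\{q/(4n),\,1/(3 d_{\max}^3\delta^2\kappa)\}$; iterating this and converting to a bound on $\EE\|\zz_{t+1}-\xx^\star\|^2$ via nonexpansiveness of $\prox^{\DD^{-1}}_{\gamma h}$ will give the stated estimate, with $D_0$ absorbing the initial Lyapunov value scaled by $d_{\max}$.

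First I would establish the \emph{deterministic} contraction of $\boldsymbol{G}_\gamma$. Under $\mu$-strong convexity of $f=\tfrac{1}{n}\sum_i\psi_i+\omega$ and $L_h$-smoothness of $h$, the Davis--Yin analysis \citep[\S4.4]{davis2015three} yields, for $\gamma=1/(3L_f)$,
\[
\|\boldsymbol{G}_\gamma(\yy)-\yy^\star\|_{\DD^{-1}}^2 \;\le\; \bigl(1-\tfrac{c}{\delta^2\kappa}\bigr)\|\yy-\yy^\star\|_{\DD^{-1}}^2,
\]
for a universal constant $c$, where the factor $\delta=1+L_h/(3L_f)$ comes from bounding the cocoercivity/contraction constants of the composed proximal and gradient operators in the scaled $\DD^{-1}$ norm; this is where the assumption that $h$ is $L_h$-smooth is crucial.

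Next I would connect the stochastic iterate to the deterministic operator. Writing $\yy_{t+1}-\yy^\star = (\boldsymbol{G}_\gamma(\yy_t)-\yy^\star) - \gamma\DD(\vv_t-\nabla f(\zz_t))$ and taking conditional expectation exploits unbiasedness $\Econd\,\vv_t=\nabla f(\zz_t)$, so cross terms vanish and
\[
\Econd\|\yy_{t+1}-\yy^\star\|_{\DD^{-1}}^2 \;\le\; \bigl(1-\tfrac{c}{\delta^2\kappa}\bigr)\|\yy_t-\yy^\star\|_{\DD^{-1}}^2 + \gamma^2 d_{\max}\,\Econd\|\vv_t-\nabla f(\zz_t)\|^2.
\]
The variance term is bounded in the usual SAGA fashion by splitting $\vv_t-\nabla f(\zz_t)$ around the optimum and using $\EE\|X-\EE X\|^2\le\EE\|X\|^2$:
\[
\Econd\|\vv_t-\nabla f(\zz_t)\|^2 \;\le\; 4L_\psi\bigl(f(\zz_t)-f(\xx^\star)-\langle\nabla f(\xx^\star),\zz_t-\xx^\star\rangle\bigr) + 4nL_f\,H_t,
\]
via co-coercivity of $\nabla\psi_i$. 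The first summand can be absorbed into the contraction of $\|\yy_t-\yy^\star\|_{\DD^{-1}}^2$ using standard TOS descent identities (this is the step where the $1/(3L_f)$ step size enters crisply). Finally, the $q$-memorization scheme gives
$\Econd H_{t+1}\le (1-q/n)H_t + \tfrac{q}{n}\cdot\tfrac{1}{2nL_f}\sum_i\|\nabla\psi_i(\zz_t)-\nabla\psi_i(\xx^\star)\|^2$,
and the last quantity is again controlled by the Bregman divergence above.

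Combining these three ingredients yields a pair of coupled recursions on $\|\yy_t-\yy^\star\|_{\DD^{-1}}^2$ and $H_t$; choosing $c_2$ of order $n/q$ relative to $c_1$ makes the $H_t$ cross-terms cancel and produces the advertised rate $1-\min\{q/(4n),1/(3 d_{\max}^3\delta^2\kappa)\}$. The main obstacle I foresee is the second step: carrying the scaled-norm contraction of $\boldsymbol{G}_\gamma$ through the sparse algorithm cleanly, since the reweighting matrix $\DD$ interacts with both the strong-convexity modulus and the smoothness of $h$, and is ultimately responsible for the $d_{\max}^3$ factor in the contraction denominator. A secondary subtlety is the initial-condition bookkeeping that converts the Lyapunov bound on $\yy_t$ into one on $\zz_{t+1}=\prox^{\DD^{-1}}_{\gamma h}(\yy_t)$ while exhibiting the factor $d_{\max}$ that multiplies $D_0$.
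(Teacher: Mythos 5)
There is a genuine gap at the heart of your second step. The identity $\yy_{t+1}-\yy^\star = (\boldsymbol{G}_\gamma(\yy_t)-\yy^\star) - \gamma\DD(\vv_t-\nabla f(\zz_t))$ is false: from the algorithm, $\yy_{t+1}-\boldsymbol{G}_\gamma(\yy_t) = \prox^{\DD^{-1}}_{\gamma g}(2\zz_t-\yy_t-\gamma\vv_t) - \prox^{\DD^{-1}}_{\gamma g}(2\zz_t-\yy_t-\gamma\DD\nabla f(\zz_t))$, i.e.\ the stochastic noise sits \emph{inside} the proximal operator of $g$. This difference of prox evaluations is neither linear in $\vv_t-\nabla f(\zz_t)$ nor zero-mean, so the cross term with $\boldsymbol{G}_\gamma(\yy_t)-\yy^\star$ does not vanish under $\Econd$; unbiasedness of $\vv_t$ buys you nothing here. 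Non-expansiveness only gives $\|\yy_{t+1}-\boldsymbol{G}_\gamma(\yy_t)\|\le\gamma\|\vv_t-\nabla f(\zz_t)\|_{\DD}$, and bounding the cross term by Cauchy--Schwarz then produces an error of order $\gamma\,\Econd\|\vv_t-\nabla f(\zz_t)\|$ (first power of $\gamma$ and of the noise), which cannot be absorbed into a geometric rate. This is exactly the obstruction that forces the paper to avoid the ``perturbed deterministic operator'' route entirely: its master recurrence (Lemma~\ref{lemma:master_inequality}) expands $\|\yy_{t+1}-\yy^\star\|^2$ directly and controls the inner product $\Econd\langle\yy_{t+1}-\yy_t,\yy_t-\yy^\star\rangle$ by applying blockwise firm non-expansiveness to the pairs $(\zz_t,\xx^\star)$ and $(\xx_t,\xx^\star)$, where $\xx^\star=\prox^{\DD^{-1}}_{\gamma g}(2\xx^\star-\yy^\star-\gamma\DD\nabla f(\xx^\star))$ by the fixed-point characterization; only then do Young's inequality and the variance bound enter, at the $\gamma^2$ level.

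A second gap is your first step: the deterministic contraction of $\boldsymbol{G}_\gamma$ in the $\DD^{-1}$-norm with factor $1-c/(\delta^2\kappa)$ is asserted by appeal to Davis--Yin, but their result does not cover the reweighted operator, and in any case the paper never proves or uses such a contraction. Instead, smoothness of $h$ enters at exactly one point (Eq.~\eqref{eq:using_smoothness}): the Lipschitz-type inverse bound for the scaled prox (Lemma~\ref{lemma:properties_prox}, Eq.~\eqref{eq:prox_l_smooth}) converts the negative term $-\gamma\mu\|\zz_t-\xx^\star\|^2/d_{\max}$ coming from strong convexity (Lemma~\ref{lemma:strongly_convex_bound}) into $-\gamma\mu\|\yy_t-\yy^\star\|^2/(d_{\max}^3(1+\gamma L_h)^2)$, which is where both $\delta$ and the $d_{\max}^3$ factor originate. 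Two smaller discrepancies: the paper's $H_t$ is not $\frac{1}{2nL_f}\sum_i\|\balpha_{i,t}-\nabla\psi_i(\xx^\star)\|^2$ for all $t$ but switches to Bregman divergences $B_{f_i}(\zz_t,\xx^\star)$ after each memory update (this hybrid is what permits the step size $1/(3L_f)$ together with arbitrary initialization of $\balpha_0$), and your variance bound carries a spurious factor of $n$ in front of $L_fH_t$. Your overall architecture (Lyapunov function coupling $\|\yy_t-\yy^\star\|^2$ with $H_t$, evolution of $H_t$ under $q$-memorization, final conversion to $\zz_{t+1}$ via non-expansiveness and $d_{\max}$) matches the paper, but the central estimate as you propose to obtain it would fail.
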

%

{ \begin{table*}[t]
\centering
\footnotesize
\hspace*{-1.4cm}
\setlength\tabcolsep{5pt}\begin{tabular}{c c | c c c c |}
\cline{2-6}
\multicolumn{1}{c|}{} & {Method} &
{step size} &  Proximal oracle & Convergence rate & {Extra assumptions}\\
\cline{2-6}

\multicolumn{1}{c|}{\multirow{2}{*}{\begin{sideways}\textbf{\sffamily Geometric}\end{sideways}}}
&
{\cellcolor{Gray} \SAGA  } &
\cellcolor{Gray} &
 \cellcolor{Gray} &
\cellcolor{Gray} &
\cellcolor{Gray} \\
\multicolumn{1}{c|}{\multirow{4}{*}}&\scriptsize\citep{defazio2014saga}
\cellcolor{Gray}&
\multirow{-2}{*}{\Large\nicefrac{$1$\,}{\,$3L_f$}} \cellcolor{Gray} &
\multirow{-2}{*}{$\mathbf{prox}_{\gamma(g + h)}$}
\cellcolor{Gray}&
\cellcolor{Gray} \multirow{-2}{*}{$\displaystyle \Big( 1 - \min\big\{\textstyle\frac{1}{4 n},\frac{1}{3 \kappa}  \big\} \Big)^t C_0$} &
 \cellcolor{Gray} \multirow{-2}{*}{{Each $\psi_i$ is  $\mu$-cvx}} \\

\multicolumn{1}{c|}{}&{ \ProxSVRG  } &
 &
 &
 &
\multirow{2}{*}{$f$ is $\mu$-cvx}\\
\multicolumn{1}{c|}{\multirow{4}{*}}&
\multirow{-1}{*}{\scriptsize\citep{xiao2014proximal}}&
\multirow{-2}{*}{\Large\nicefrac{$1$\,}{\,$10L_f$}}  &
\multirow{-2}{*}{$\mathbf{prox}_{\gamma(g + h)}$}
&
 \multirow{-2}{*}{$\Big(\frac{1}{\kappa 0.6 m } + \frac{2}{3}\Big)^t  C_0$} &\\

\multicolumn{1}{c|}{} &
{\cellcolor{Gray} \VRTOS  } &
\cellcolor{Gray} &
 \cellcolor{Gray} &
\cellcolor{Gray} &
\cellcolor{Gray} \multirow{-1}{*}{{Each $\psi_i$ is  $\mu$-cvx}}\\
\multicolumn{1}{c|}{\multirow{4}{*}}&{\scriptsize(this work)}
\cellcolor{Gray}&
\multirow{-2}{*}{\Large\nicefrac{$1$\,}{\,$3L_f$}} \cellcolor{Gray} &
\multirow{-2}{*}{$\mathbf{prox}_{\gamma g}~,~\mathbf{prox}_{\gamma h}$}
\cellcolor{Gray}&
\cellcolor{Gray} \multirow{-2}{*}{$\displaystyle \Big( 1 - \min\big\{\textstyle\frac{q}{4 n},\frac{1}{3 d_{\max{}}^2\delta^2 \kappa}  \big\} \Big)^t C_0$} &
 \cellcolor{Gray} \multirow{-1}{*}{{and $h$ is $L_h$-smooth}} \\
 \cline{2-6}\addlinespace[0.2cm]

\cmidrule{2-6}\vspace{-3.2ex}\\
\cline{2-6}
\multicolumn{1}{c|}{\multirow{4}{*}}& {\cellcolor{Gray}\multirow{-1}{*}{ \SAGA}  } &
\cellcolor{Gray} &
  \cellcolor{Gray} &
\cellcolor{Gray} &
\cellcolor{Gray}  \\
\multicolumn{1}{c|}{\multirow{4}{*}}&
\multirow{-1}{*}{\scriptsize\citep{defazio2014saga}}\cellcolor{Gray}&
\multirow{-2}{*}{\Large\nicefrac{$1$\,}{\,$3L_f$}} \cellcolor{Gray} &
\multirow{-2}{*}{$\mathbf{prox}_{\gamma (g + h)}$}
\cellcolor{Gray}& \multirow{-2}{*}{$\mathcal{O}(1/t)$}
\cellcolor{Gray} \multirow{-2}{*}{} &
 \cellcolor{Gray} \multirow{-2}{*}{None}\\
\multicolumn{1}{c|}{\multirow{-2}{*}{\begin{sideways}\textbf{\sffamily Sublinear}\end{sideways}}}
& {\multirow{-1}{*}{ Stochastic \textsc{Tos}}  } &
 &
 &
 &
\multirow{-1}{*}{$f$ is $\mu$-cvx +} \\
\multicolumn{1}{c|}{\multirow{4}{*}}&
\multirow{-1}{*}{\scriptsize \citep{yurtsever2016stochastic}}&
\multirow{-2}{*}{$\mathcal{O}${\Large(\nicefrac{1\,}{\,$t$})}} &
\multirow{-2}{*}{$\mathbf{prox}_{\gamma g}~,~\mathbf{prox}_{\gamma h}$}
&
\multirow{-2}{*}{$\mathcal{O}(1/t)$} &
\multirow{-1}{*}{bound on gradients} \\
\multicolumn{1}{c|}{\multirow{4}{*}} &
\cellcolor{Gray}\VRTOS &
\cellcolor{Gray}&
\cellcolor{Gray}&
\cellcolor{Gray}\multirow{3}{*}{$\displaystyle $} &  \cellcolor{Gray}\\
\multicolumn{1}{c|}{\multirow{4}{*}}& \cellcolor{Gray}{\scriptsize (this work, dense/sparse variant)}
&\cellcolor{Gray} \multirow{-2}{*}{\Large\nicefrac{$1$\,}{\,$3L_f$}}
&\cellcolor{Gray} \multirow{-2}{*}{$\mathbf{prox}_{\gamma g}~,~\mathbf{prox}_{\gamma h}$}
&\cellcolor{Gray} \multirow{-2}{*}{$\mathcal{O}(1/{t})$ / $\mathcal{O}(1/\sqrt{t})$}
&\cellcolor{Gray}  \multirow{-2}{*}{None} \\
\cline{2-6}
\end{tabular}
\caption{{\bfseries Assumptions and properties of related incremental methods.} In every case, we take the step size recommended by the theory, where we assume $\omega=0$ to make them comparable. Proximal oracle is the proximal operators that are needed by the algorithm. Extra assumptions refer to those other than Assumptions 1 and 2.  The linear rates use the quantities $\delta = (1 + \gamma L_h)$,  $\kappa = L_f/\mu$.  For \ProxSVRG, $m$ denotes the epoch size and the convergence rate is relative to the number of epochs and not iterations like the rest.
}
\label{table:convergence_rates}
\end{table*}
}

\subsection{Discussion}\label{scs:discussion}

{\bfseries Comparison of convergence rates}. We summarize the obtained convergence rates for the proposed methods and compare them against the best known rates for related stochastic methods in Table~\ref{table:convergence_rates}.
In the linearly-convergent regime, we obtain rates that are similar to \SAGA\, but with the rate factor multiplied by ${1}/{(\delta^2 d^3_{\max})}$, quantity that depends on the smoothness of $g$ and the sparsity of the gradients.

{\bfseries An improved \ProxSVRG\ variant}. The analysis of \ProxSVRG~\citep{xiao2014proximal} requires that the step size verifies an implicit equation that depends among other things on the strong convexity parameter. For typical choices of the parameters this is $1/(10L_f)$~\citep[Theorem 1]{xiao2014proximal}. In contrast, Sparse \VRTOS\ with \SVRG-like sampling with $h=0$ yields a variant of \ProxSVRG\ with more favorable properties. First, none of its parameters depend on the strong convexity constant (while still obtaining a linear convergence rate since $L_h=0$ in this case), which is most often unknown. Second, it admits the much larger step size  $1/(3L_f)$, which is, to the best of our knowledge, the largest step size of any \SVRG\ variant. Third, it can leverage sparsity in the input data through sparse updates.

{\bfseries Linear convergence without smoothness of the proximal term}. Theorem \ref{thm:linear_convergence} requires smoothness of one of the proximal terms to guarantee linear convergence. Despite this, linear convergence is observed in practice without this assumption (Figure~\ref{fig:bench_fused}). This has also been observed in the case of the original (non-variance reduced) three operator splitting~\citep{davis2017three, pedregosa2018adaptive}, although an explanation for this is still an open problem.
Furthermore, the lack of linear convergence when both proximal terms are non-smooth does not seem to be a limitation of the proof, as a counterexample was provided in  \citep[Appendix D.6]{davis2015three}. In this work, the authors constructed a strongly monotone operator with a sublinear convergence.

%

{\bfseries Step size adaptivity to linear convergence}. A practical consequence of the above theorems is that using the same step size $\gamma={1}/{(3 L_f)}$ we obtain a sublinear convergence by Theorem \ref{thm:sublinear_convergence} and a linear rate (under additional assumptions) by Theorem \ref{thm:linear_convergence}. That is, one can use the ``universal'' step size ${1}/{(3L_f)}$ and automatically obtain linear convergence whenever the assumptions of Theorem \ref{thm:linear_convergence} are verified.

{\bfseries Limitations}. The following are some scenarios under which the proposed method is
expected to perform poorly. The cost in computation and storage scales linearly with the number of proximal terms, hence it cannot cope with other scenarios with many nonsmooth terms such as empirical risk minimization with the hinge loss or group lasso with overlap with a large number of overlaps (for instance $> 100$).
Also, there are still penalties that cannot be reduced to a sum of proximal terms, such as the nuclear norm. Algorithms based on Frank-Wolfe~\citep{jaggi2013revisiting} or with approximate proximal operators ~\citep{schmidt2011convergence} might be better suited in such regimes.

\section{Experiments}\label{scs:experiments}

\begin{figure*}[t]
\centering
\begin{tabular}{lrrrr}
\toprule
{\bfseries\sffamily Dataset} & \multicolumn{1}{c}{\#\tablefont{samples}} & \multicolumn{1}{c}{\#\tablefont{dimensions}
} & {\tablefont{density}} & \multicolumn{1}{c}{$L_f/\mu$}\\
\midrule
{\bfseries\sffamily RCV1 (full)}~\citep{lewis2004rcv1} & \hfill 697,641 & \hfill 47,236 & \hfill $1.5 \times 10^{-3}$ & \hfill 2.50 $\times 10^4$ \\
{\bfseries\sffamily URL}~\citep{ma2009identifying} & \hfill 2,396,130 & \hfill 3,231,961 & \hfill $3.5 \times 10^{-5}$ & \hfill 1.28 $\times 10^7$\\
{\bfseries\sffamily KDD10}~\citep{yu2010feature} & \hfill 19,264,097 & \hfill 29,890,095 & \hfill 9.8 $\times 10^{-7}$ & \hfill $5.2\times10^8$ \\
{\bfseries\sffamily Criteo}~\citep{juan2016field} & \hfill 45,840,617 & \hfill 1,000,000 & \hfill $3.8\times 10^{-5}$ & \hfill $1.1\times 10^7$\\
\bottomrule
\end{tabular}

\vspace*{1em}
\centering \includegraphics[width=0.92\linewidth]{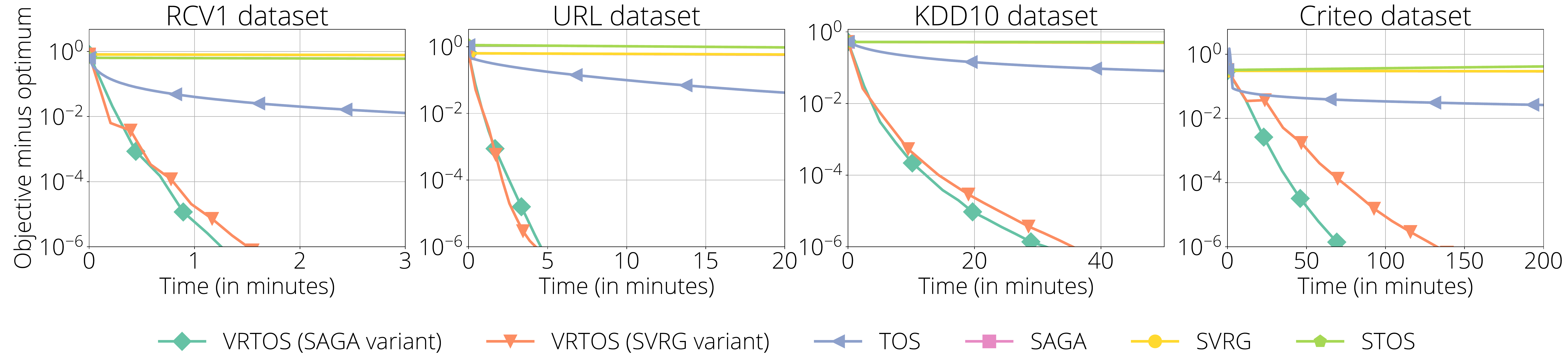}
\caption{{\bfseries Top}: Description of considered datasets. {\bfseries Bottom:} Suboptimality vs time of different algorithms on a logistic regression with overlapping group lasso penalty problem. }
\label{fig:bench_fused}
\end{figure*}

Although the proposed methods can be applied more broadly, we consider for the experiments a logistic regression problem with squared $\ell_2$ regulrization and an overlapping group lasso penalty~\citep{jacob2009group}. Following \citet{jacob2009group} we choose
groups  of 10 variables with 2 variables of overlap between two successive groups: $\{\{1, \ldots, 10\}, \{8, \ldots, 18\}, \{16, \ldots, 26\}, \ldots\}$.
The amount of group regularization was chosen such that the solution has roughly $10\%$ of non-zero coefficients and the of $\ell_2$ regularization was fixed to ${1}/{n}$. We consider the following methods:
\begin{itemize}[leftmargin=*]
\item The proposed method Sparse \VRTOS\ (Algorithm~\ref{alg:vrtos_sparse}), where the overlapping group lasso penalty is split as a sum of two non-overlapping group lasso penalties, for which the proximal operator is available in closed form. We used the formulation with 3 proximal terms of \S\ref{scs:extension_k_terms} to better leverage sparsity in the dataset and consider \SAGA\ and \SVRG-like updates, denoted \VRTOS\ (\SAGA\ variant) and \VRTOS\ (\SVRG\ variant) respectively. This implementation is publicly available in the C-OPT package.\footnote{\url{http://openopt.github.io/copt/}}

It is worth noting that while original penalty is \emph{not} block separable, each of the terms in the splitting as two group lasso penalties \emph{is} block separable. This will allow us to make a much more efficient use of sparsity than what is possible on on methods like \SAGA\ and \ProxSVRG.

\item The three operator splitting (denoted \TOS), in its recently proposed variant with adaptive step size~\citep{pedregosa2018adaptive}.
\item The stochastic three operator spitting of \citep{yurtsever2016stochastic} with the same splitting as \VRTOS, denoted \textsc{Stos}.
\item \SAGA\ and \ProxSVRG, where the proximal operator is evaluated approximately using 10 iterations of the Douglas-Rachford method.
\end{itemize}

The above methods were compared on 4 large-scale datasets described in the table of Figure~\ref{fig:bench_fused}. Further details and implementation aspects are discussed in \ref{apx:implementation}.

The best performing algorithms overall are the proposed \VRTOS\ variants, which are over an order of magnitude faster than the second best method, the adaptive three operator splitting. The stochastic three operator splitting, not being able to take advantage of the sparsity in the gradients, performs poorly in this benchmark, appearing as a straight line. \SAGA\ and \ProxSVRG\ were the slowest since they require to compute a costly proximal operator at each iteration and are unable to leverage the sparsity of the dataset due to the non-block-separability of the non-smooth term.

It is worth noting from Figure~\ref{fig:bench_fused} that the two variants of Sparse \VRTOS\ exhibit an empirical linear convergence, despite the fact that the theory only predicts in this regime a much slower $\mathcal{O}(1/\sqrt{t})$ convergence rate (Theorem~\ref{thm:sublinear_convergence}).

We provide extra experiments in \ref{apx:overlapping_benchmarks}.

\section{Future work}


This work can be extended in several ways. As highlighted in \S\ref{scs:discussion}, a theoretical explanation for the empirical linear convergence without smoothess of any proximal term, even for the full gradient algorithm, is lacking. We conjecture \emph{partly smooth} is a sufficient condition on the penalties to ensure local linear convergence, as recently proven for related methods~\citep{liang2018local}.
Second, we conjecture that the convergence rate of the sparse variant can be improved up to to  $\mathcal{O}(1/t)$.
A third direction for future work would be the development an extension that allow for a linear operator inside one of the proximal terms, as in \citep{condat2013direct,zhao2018stochastic,yan2018new}.

\section*{Acknowledgements}
The authors warmly thank
Vincent Roulet, Vlad Niculae, R\'emi Leblond and Federico Vaggi for their feedback on the manuscript, as well as Adrien Taylor, Alexandre D'Aspremont, Gabriel Peyr\'e, Guillaume Obozinski, P. Balamurugan, Francis Bach and Marwa El Halabi for fruitful discussions.

This work has been done while FP was under funding from the European Union's Horizon 2020 research and innovation programme under the Marie Sklodorowska-Curie grant agreement 748900. KF is funded through the project OATMIL ANR-17-CE23-0012 of the French National Research Agency (ANR).
Computing time on was donated by Amazon through the program ``AWS Cloud Credits for Research''.

\bibliographystyle{apalike}
\bibliography{biblio}

\clearpage
\appendix
\onecolumn

\titleformat{\section}{\Large\bfseries}{\thesection}{1em}{}
\titleformat{\subsection}{\large\bfseries}{\thesubsection}{1em}{}
\gdef\thesection{Appendix \Alph{section}}



{\centering{\LARGE\bfseries Proximal Splitting Meets Variance Reduction}

\vspace{1em}
\centering{{\LARGE\bfseries Supplementary material}}

}

\paragraph{Outline.} The supplementary material of this paper is organized as follows.
\begin{itemize}[leftmargin=*]
    \item \ref{apx:basic_defs} presents basic definitions and properties that will be used throughout the proofs but which are not specific to our methods. Most of these can be found in convex optimization textbooks, such as \citep{bauschke2017convex, nesterov2004introductory}.
    \item \ref{apx:fixed_point_characterization} give a characterization of the fixed points of the three operator splitting, relating the set of fixed points of the three operator splitting to the solutions of primal and dual objectives. This is a stronger result than the one stated in \citep{davis2017three} and used in some of our proofs.
    \item \ref{apx:proofs} gives the proofs of those results in the Analysis section of the paper.
    \item \ref{apx:optimizing_multiple_penalties} discusses splitting strategies for different penalties and examines some cases in which the scaled proximal operator can be computed in closed form.
    \item \ref{apx:experiments} discusses implementation aspects of the proposed algorithms.
\end{itemize}

\vspace{1em}

\section{Basic definitions and properties}\label{apx:basic_defs}

\vskip 1em

\begin{definition}[proper function] A function $f: \mathcal{X} \subseteq \RR^p \to ]-\infty, \infty]$ is said to be proper if its domain is not empty.
\end{definition}

\vskip 0.5em

\begin{definition}[Fenchel conjugate] The Fenchel conjugate of a function $f: \mathcal{X} \subseteq \RR^p \to ]-\infty, \infty]$ is defined as
\begin{equation}
f^*(\xx^\star) = \sup \left \{ \left. \left\langle \xx^\star , \xx \right\rangle - f \left( \xx \right) \right| \xx \in \mathcal{X} \right\}.
\end{equation}
\end{definition}

\vskip 0.5em

\begin{definition}[lower semicontinuity] We say that a proper convex function $f$ is lower-semicontinuous if all of its levelsets ${\{\xx \in \dom(f)\,|\,f(\xx) \leq \alpha \}}$ are closed.
\end{definition}

\vskip 0.5em

\begin{definition}[relative interior] The relative interior of a convex set $C \subseteq \RR^p$ is defined as
\begin{equation}
\operatorname{relint}(C) \defas \{\xx \in C : \forall {\yy \in C} \; \exists {\lambda > 1}: \lambda \xx + (1-\lambda)\yy \in C\}
\end{equation}
\end{definition}

\vskip 0.5em

\begin{definition}[Bregman divergence]\label{def:bregman}
 The Bregman divergence associated with a convex function $f$ for points $\xx, \yy$ in its domain is defined as:
  $$
  B_f(\xx, \yy) \defas f(\xx) - f(\yy) - \langle \nabla f(\yy), \xx - \yy \rangle
  $$
  Note that this is always positive due to the convexity of $f$.
\end{definition}

\vskip 0.5em

\begin{definition}[Proximal operators]
Here, we redefine 2 variants of a critical notion. The proximal operator is defined for a function $\varphi$, step size $\gamma > 0$ as:
\begin{equation}
  {\prox_{\gamma \varphi}}(\xx) \defas {\argmin_{\zz \in \RR^p}}\big\{\, \varphi(\zz) + \frac{1}{2\gamma}\|\xx - \zz\|\big\}\,
\end{equation}
The \emph{scaled proximal opeartor} is defined for a function $\varphi$, step size $\gamma > 0$ and positive definite matrix $\boldsymbol{H}$ as the solution of the following optimization problem
\begin{equation}
  {\prox^{\boldsymbol{H}}_{\gamma \varphi}}(\xx) \defas {\argmin_{\zz \in \RR^p}}\big\{\, \varphi(\zz) + \frac{1}{2\gamma}\|\xx - \zz\|_{\boldsymbol{H}}^2\,\big\}\,~\text{ with $\|\cdot\|_{\HH}^2 \defas \langle \cdot, \HH\cdot \rangle$}.
\end{equation}
\end{definition}

\vskip 0.5em

\begin{lemma}[subgradient characterization of proximal operator]\label{lemma:prox_characterization}
Let $g$ be a convex proper lower semicontinuous function. Then for any $
\xx$, $\HH$ positive definite matrix and any $\gamma > 0$ we have the following characterization of proximal operator:
\begin{equation}
\zz = \prox^{\HH}_{\gamma g}(\xx) \iff \HH(\xx - \zz)/{\gamma} \in \partial g(\zz)
\end{equation}
\end{lemma}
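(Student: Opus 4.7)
The plan is to derive the characterization as a first-order optimality condition for the strongly convex minimization problem defining $\prox^{\HH}_{\gamma g}(\xx)$. Since $\HH$ is positive definite, the quadratic term $q(\zz) \defas \frac{1}{2\gamma}\|\xx-\zz\|_{\HH}^2$ is strongly convex, and adding it to the proper convex lower semicontinuous function $g$ yields a strongly convex proper lower semicontinuous objective $F(\zz) \defas g(\zz) + q(\zz)$. Hence $F$ admits a unique minimizer, so $\prox^{\HH}_{\gamma g}(\xx)$ is well-defined and the statement ``$\zz = \prox^{\HH}_{\gamma g}(\xx)$'' is equivalent to ``$\zz$ minimizes $F$''.

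Next I would invoke the standard Fermat rule for convex optimization: $\zz$ is a minimizer of $F$ if and only if $0 \in \partial F(\zz)$. To split the subdifferential of the sum, I would use the Moreau--Rockafellar sum rule. Because $q$ is finite-valued (hence continuous) on all of $\RR^p$, the qualification condition for the sum rule holds trivially, giving $\partial F(\zz) = \partial g(\zz) + \partial q(\zz)$. Since $q$ is differentiable with $\nabla q(\zz) = -\HH(\xx-\zz)/\gamma$, its subdifferential is the singleton $\{\nabla q(\zz)\}$.

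Combining these two steps, the optimality condition $0 \in \partial F(\zz)$ becomes
\begin{equation}
0 \in \partial g(\zz) - \HH(\xx-\zz)/\gamma,
\end{equation}
which is exactly $\HH(\xx - \zz)/\gamma \in \partial g(\zz)$, establishing the claimed equivalence.

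The argument is essentially a textbook application of convex analysis, so no step poses a real obstacle; the only point requiring a small amount of care is checking that the sum rule applies without an extra constraint qualification, which is handled by the fact that $q$ has full domain. The result holds identically in the unscaled case $\HH = \boldsymbol{I}$ by specialization, which is the form most commonly quoted in the literature (e.g.\ \citep{bauschke2017convex}).
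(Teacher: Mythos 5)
Your proof is correct and follows essentially the same route as the paper's: both reduce the claim to the first-order optimality condition $0 \in \partial g(\zz) + \frac{\HH}{\gamma}(\zz - \xx)$ for the minimization problem defining the scaled proximal operator. You simply make explicit two points the paper leaves implicit — well-definedness via strong convexity of the objective, and the Moreau--Rockafellar sum rule with its (trivially satisfied) qualification condition — which is a welcome but not substantively different elaboration.
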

\begin{proof}
By the definition of proximal operator we have that $\zz = \prox_{\gamma g}^{\HH}(\xx)$ is equivalent to
\begin{align}
&\zz \in \argmin_{\zz' \in \RR^p} g(\zz') + \frac{1}{2\gamma}\| \zz' - \xx\|_{\HH}^2\\
&\iff 0 \in \partial g(\zz) + \frac{\HH}{\gamma}(\zz - \xx)\\
&\iff \frac{\HH}{\gamma}(\xx - \zz) \in \partial g(\zz)
\end{align}
where the first equivalence is a consequence of the first order optimality conditions.
\end{proof}

\begin{lemma}[Conjugate-inverse identity]\label{lemma:conjugate_inverse}
Let $h$ be a convex, proper lower semicontinuous function. Then
\begin{equation}
\uu \in \partial h(\zz) \iff \zz \in \partial h^*(\uu)~.
\end{equation}
In other words, $(\partial h)^{-1} = \partial h^*$.
\end{lemma}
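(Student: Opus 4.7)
The plan is to deduce the equivalence from the Fenchel--Young inequality together with the Fenchel--Moreau biconjugation theorem. First I would recall that the very definition of $h^*$ as a supremum yields the Fenchel--Young inequality
\begin{equation*}
h(\zz) + h^*(\uu) \;\geq\; \langle \uu, \zz \rangle \qquad \text{for all } \zz, \uu,
\end{equation*}
with equality precisely when $\zz$ attains the supremum defining $h^*(\uu)$. The workhorse of the proof is then the following intermediate characterization: $\uu \in \partial h(\zz)$ if and only if $h(\zz) + h^*(\uu) = \langle \uu, \zz \rangle$.

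To establish this characterization, I would argue both directions from the definition of the subdifferential. If $\uu \in \partial h(\zz)$, then $h(\yy) \geq h(\zz) + \langle \uu, \yy - \zz \rangle$ for all $\yy$, which rearranges to $\langle \uu, \yy \rangle - h(\yy) \leq \langle \uu, \zz \rangle - h(\zz)$; taking supremum over $\yy$ gives $h^*(\uu) \leq \langle \uu, \zz \rangle - h(\zz)$, and combined with Fenchel--Young this forces equality. Conversely, equality in Fenchel--Young at $(\zz, \uu)$ means $\zz$ realizes the supremum defining $h^*(\uu)$, and unpacking the supremum again yields exactly the subgradient inequality $\uu \in \partial h(\zz)$.

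The decisive observation is that the equality condition $h(\zz) + h^*(\uu) = \langle \uu, \zz \rangle$ is symmetric in the roles of $h$ and $h^*$. Applying the same characterization but with $h$ replaced by $h^*$, I obtain $\zz \in \partial h^*(\uu) \iff h^*(\uu) + h^{**}(\zz) = \langle \uu, \zz \rangle$. Under the assumptions that $h$ is proper, convex, and lower semicontinuous, the Fenchel--Moreau theorem gives $h^{**} = h$, so the two equality conditions coincide and the equivalence $\uu \in \partial h(\zz) \iff \zz \in \partial h^*(\uu)$ follows.

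The only non-routine step is the appeal to Fenchel--Moreau, and this is exactly where the lower semicontinuity hypothesis is essential: without it, $h^{**}$ would be the lower semicontinuous convex envelope of $h$ and could be strictly smaller, breaking the symmetry argument. Properness is likewise needed to avoid the degenerate conjugate $h^* \equiv -\infty$. Beyond invoking these standard results, the rest is bookkeeping with the Fenchel--Young inequality.
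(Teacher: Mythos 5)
Your proof is correct and complete: the Fenchel--Young equality characterization of the subdifferential combined with the Fenchel--Moreau biconjugation theorem is exactly the standard argument, and you correctly identify where properness and lower semicontinuity are used. The paper does not prove this lemma itself but defers to \citep[Corollary 16.30]{bauschke2017convex} and \citep[Proposition 11.3]{rockafellar1998variational}, whose proofs follow the same route you take, so your proposal is a faithful self-contained version of the cited argument.
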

\begin{proof}
See e.g. \citep[Corollary 16.30]{bauschke2017convex} or \citep[Proposition 11.3]{rockafellar1998variational}.
\end{proof}

\begin{lemma}[Generalized variance decomposition]\label{lemma:generalized_variance_decomposition}
Let $\zeta_i$ be a random variable and let $\Econd $ the expectation with respect to this random variable. Furthermore, let $\QQ_i$ be an orthogonal projection such that $\QQ_i \boldsymbol{\zeta}_i = \boldsymbol{\zeta}_i$, $\Econd \QQ_i$ is invertible and $\boldsymbol{A} = (\Econd \QQ_i)^{-1}$. Then we have
	\begin{equation}
	\Econd \|\boldsymbol\zeta_i - \QQ_i\boldsymbol{A} \Econd \boldsymbol\zeta_i\|^2 = \Econd \|\boldsymbol\zeta_i\|^2 - \|\Econd \boldsymbol\zeta_i\|^2_{\boldsymbol{A}}~.\label{eq:variance_decomp2}
	\end{equation}
\end{lemma}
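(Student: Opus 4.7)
The plan is to expand the squared norm on the left-hand side and evaluate each term using the three structural hypotheses: (i) $\QQ_i\boldsymbol\zeta_i = \boldsymbol\zeta_i$, (ii) $\QQ_i$ is symmetric and idempotent (since it is an orthogonal projection, so $\QQ_i^\top\QQ_i = \QQ_i$), and (iii) $\Econd\QQ_i = \boldsymbol{A}^{-1}$. Writing $\boldsymbol v \defas \boldsymbol{A}\,\Econd\boldsymbol\zeta_i$ (a deterministic vector) for brevity, the expansion gives
\begin{equation*}
\Econd\|\boldsymbol\zeta_i - \QQ_i \boldsymbol v\|^2 \;=\; \Econd\|\boldsymbol\zeta_i\|^2 \;-\; 2\,\Econd\langle \boldsymbol\zeta_i, \QQ_i\boldsymbol v\rangle \;+\; \Econd\|\QQ_i\boldsymbol v\|^2.
\end{equation*}

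For the cross term, I would use symmetry of $\QQ_i$ together with $\QQ_i\boldsymbol\zeta_i = \boldsymbol\zeta_i$ to rewrite $\langle\boldsymbol\zeta_i,\QQ_i\boldsymbol v\rangle = \langle \QQ_i\boldsymbol\zeta_i, \boldsymbol v\rangle = \langle \boldsymbol\zeta_i, \boldsymbol v\rangle$; taking expectations and substituting back $\boldsymbol v = \boldsymbol{A}\Econd\boldsymbol\zeta_i$ turns this into $\|\Econd\boldsymbol\zeta_i\|_{\boldsymbol{A}}^2$. For the last term, I would use $\QQ_i^\top\QQ_i = \QQ_i$ to get $\|\QQ_i\boldsymbol v\|^2 = \langle \boldsymbol v, \QQ_i\boldsymbol v\rangle$, then pull $\boldsymbol v$ outside the expectation to obtain $\langle \boldsymbol v, (\Econd\QQ_i)\boldsymbol v\rangle = \langle \boldsymbol v, \boldsymbol{A}^{-1}\boldsymbol v\rangle$; substituting $\boldsymbol v = \boldsymbol{A}\Econd\boldsymbol\zeta_i$ this collapses to $\langle \boldsymbol{A}\Econd\boldsymbol\zeta_i,\Econd\boldsymbol\zeta_i\rangle = \|\Econd\boldsymbol\zeta_i\|_{\boldsymbol{A}}^2$ as well.

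Combining the three pieces yields $\Econd\|\boldsymbol\zeta_i\|^2 - 2\|\Econd\boldsymbol\zeta_i\|_{\boldsymbol{A}}^2 + \|\Econd\boldsymbol\zeta_i\|_{\boldsymbol{A}}^2 = \Econd\|\boldsymbol\zeta_i\|^2 - \|\Econd\boldsymbol\zeta_i\|_{\boldsymbol{A}}^2$, which is exactly \eqref{eq:variance_decomp2}. There is no genuine obstacle here; the result is a direct calculation and the only thing to watch is bookkeeping with the projection properties. The most conceptually delicate point is recognizing the precise role of each hypothesis: $\QQ_i\boldsymbol\zeta_i = \boldsymbol\zeta_i$ is needed exactly in the cross term to absorb the projection onto $\boldsymbol\zeta_i$, while the invertibility hypothesis $\boldsymbol{A} = (\Econd\QQ_i)^{-1}$ is used exactly once, in the third term, to cancel the projection against $\boldsymbol{A}$ and recover the weighted norm. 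This also explains the form of the reweighting $\QQ_i\boldsymbol{A}\Econd\boldsymbol\zeta_i$ in the statement: it is precisely the choice that makes $\QQ_i\boldsymbol{A}\Econd\boldsymbol\zeta_i$ an unbiased estimator of $\Econd\boldsymbol\zeta_i$ (since $\Econd(\QQ_i\boldsymbol{A})=\boldsymbol{I}$), which is what allows the standard variance identity to generalize.
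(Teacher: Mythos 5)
Your proof is correct and follows essentially the same route as the paper's: expand the square, use symmetry and idempotence of $\QQ_i$ together with $\QQ_i\boldsymbol\zeta_i=\boldsymbol\zeta_i$ for the cross term, and use $\Econd\QQ_i=\boldsymbol{A}^{-1}$ to collapse the quadratic term, so that both reduce to $\|\Econd\boldsymbol\zeta_i\|_{\boldsymbol{A}}^2$. The closing remark about $\QQ_i\boldsymbol{A}\Econd\boldsymbol\zeta_i$ being an unbiased estimator of $\Econd\boldsymbol\zeta_i$ is a nice observation not made explicitly in the paper, but the calculation itself is identical.
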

\begin{proof}
The assumption of $\QQ_i$ being an orthogonal projection implies that it is symmetric and idempotent. Developing the square we have
\begin{align}
	\Econd \|\boldsymbol\zeta_i - \QQ_i \boldsymbol{A} \Econd \boldsymbol\zeta_i\|^2 &= \Econd \|\boldsymbol\zeta_i\|^2 + \Econd\langle\QQ_i \boldsymbol{A}  \Econd\boldsymbol\zeta_i, \QQ_i \boldsymbol{A}  \Econd\boldsymbol\zeta_i\rangle - 2 \Econd \langle \boldsymbol\zeta_i, \QQ_i\boldsymbol{A}  \Econd \boldsymbol\zeta_i\rangle \\
    &= \Econd \|\boldsymbol\zeta_i\|^2 + \Econd\langle\QQ_i \QQ_i \boldsymbol{A}  \Econd\boldsymbol\zeta_i,  \boldsymbol{A}  \Econd\boldsymbol\zeta_i\rangle - 2 \Econd \langle \boldsymbol\QQ_i\zeta_i, \boldsymbol{A}  \Econd \boldsymbol\zeta_i\rangle \\
    &\qquad \text{ (by symmetry of $\QQ_i$)}\nonumber\\
	&= \Econd \|\boldsymbol\zeta_i\|^2 + \Econd\langle\QQ_i \boldsymbol{A}  \Econd\boldsymbol\zeta_i, \boldsymbol{A} \Econd\boldsymbol\zeta_i\rangle - 2 \Econd \langle \boldsymbol\zeta_i,  \boldsymbol{A} \Econd \boldsymbol\zeta_i\rangle \\
	&\qquad \text{ (idempotence of $\QQ_i$ and assumption $\QQ_i \boldsymbol\zeta_i = \boldsymbol\zeta_i$ respectively) }\nonumber\\
	&= \Econd \|\boldsymbol\zeta_i\|^2 + \langle  \Econd\boldsymbol\zeta_i, \boldsymbol{A} \Econd\boldsymbol\zeta_i\rangle - 2  \langle \Econd\boldsymbol\zeta_i,  \boldsymbol{A}\Econd \boldsymbol\zeta_i\rangle \\
	&\qquad \text{ (taking expectations )}\nonumber\\
	& = \Econd \|\boldsymbol\zeta_i\|^2 - \|\Econd \boldsymbol\zeta_i\|^2_{\boldsymbol{A}}
\end{align}
\end{proof}


\hfill

\begin{lemma}[Smooth inequality 1]\label{lemma:l_smooth_ineq}
  Let $f_i$ be $L_f$-smooth and convex for $i=1, \ldots, n$. Then it is verified that
  \begin{equation}
  \frac{1}{n}\sum_{i=1}^n\|\nabla f_i(\xx) - \nabla f_i(\yy)\|^2 \leq 2 L_{f}B_f(\xx, \yy)~.
  \end{equation}
\end{lemma}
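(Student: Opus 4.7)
The plan is to reduce this to the standard co-coercivity inequality applied term-by-term, then average. Recall that if a function $\phi$ is convex and $L$-smooth, then its gradient is $1/L$-co-coercive, which is equivalent to the inequality
\begin{equation*}
\frac{1}{2L}\|\nabla \phi(\xx) - \nabla \phi(\yy)\|^2 \leq \phi(\xx) - \phi(\yy) - \langle \nabla \phi(\yy), \xx - \yy\rangle = B_\phi(\xx, \yy).
\end{equation*}
This is a classical consequence of the Baillon–Haddad theorem (see, e.g., Nesterov, \emph{Introductory Lectures on Convex Optimization}, Theorem 2.1.5), and I would simply cite it.

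First I would apply this inequality to each $f_i$ (which by hypothesis is convex and $L_f$-smooth), obtaining
\begin{equation*}
\frac{1}{2L_f}\|\nabla f_i(\xx) - \nabla f_i(\yy)\|^2 \leq B_{f_i}(\xx, \yy) \quad \text{for each } i = 1, \ldots, n.
\end{equation*}
Next I would average these $n$ inequalities, which yields
\begin{equation*}
\frac{1}{2L_f n}\sum_{i=1}^n \|\nabla f_i(\xx) - \nabla f_i(\yy)\|^2 \leq \frac{1}{n}\sum_{i=1}^n B_{f_i}(\xx, \yy).
\end{equation*}
Finally, I would observe that the Bregman divergence is linear in the underlying function, so since $f = \frac{1}{n}\sum_{i=1}^n f_i$, the right-hand side equals $B_f(\xx, \yy)$ by Definition~\ref{def:bregman}. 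Multiplying both sides by $2L_f$ delivers the claim.

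There is no real obstacle here; the whole argument is essentially a single invocation of gradient co-coercivity combined with the linearity of the Bregman divergence in its generating function. The only subtlety worth mentioning is that the pointwise co-coercivity inequalities use the Bregman divergences of the individual summands $f_i$, not of $f$ itself, but this is exactly what allows them to combine cleanly into $B_f(\xx, \yy)$ after averaging.
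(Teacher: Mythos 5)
Your proof is correct and follows essentially the same route as the paper's: invoke the co-coercivity inequality from Nesterov's Theorem 2.1.5 on each $f_i$, average over $i$, and use that $f = \frac{1}{n}\sum_{i=1}^n f_i$ so the averaged Bregman divergences collapse to $B_f(\xx,\yy)$. The paper's proof is just a terser version of exactly this argument.
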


\begin{proof}
Since each $f_i$ is $L_f$-smooth, it is verified \citep[Theorem 2.1.5]{nesterov2004introductory} that
\begin{equation}
\|\nabla f_i(\xx) - \nabla f_i(\yy)\|^2 \leq 2 L_{f}(f_i(\xx) - f_i(\yy) - \langle \nabla f_i(\yy), \xx - \yy\rangle)
\end{equation}
The result is obtained by averaging over $i$.
\end{proof}

\vspace{0.5em}

\begin{lemma}[Bound on matrix norm]\label{lemma:bound_matrix_norm}
Let $\DD$ be a diagonal matrix with strictly positive diagonal elements,  let $d_{\max}$ and $d_{\min}$ denote its maximum and minimum diagonal entry respectively. Then for any $\xx \in \RR^p$ we have the following inequalities
\begin{equation}
d_{\min{}}\|\xx\|^2\leq \|\xx\|^2_{\DD} \leq d_{\max}\|\xx\|^2
\end{equation}
\end{lemma}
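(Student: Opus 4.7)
The plan is straightforward because this is essentially an elementary property of quadratic forms induced by diagonal positive definite matrices. I would begin by expanding the definition $\|\xx\|_\DD^2 = \langle \xx, \DD\xx\rangle$ given in the paper, and use the fact that $\DD$ is diagonal to write this as a sum of scalar quantities indexed by coordinates.

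First I would observe that for a diagonal matrix with entries $[\DD]_{i,i} = d_i > 0$, one has
\begin{equation*}
\|\xx\|_\DD^2 = \sum_{i=1}^p d_i\, x_i^2,
\end{equation*}
where $x_i = [\xx]_i$. Since $d_{\min} \leq d_i \leq d_{\max}$ for every $i$, and since each $x_i^2 \geq 0$, multiplying these inequalities by $x_i^2$ and summing over $i$ yields
\begin{equation*}
d_{\min} \sum_{i=1}^p x_i^2 \;\leq\; \sum_{i=1}^p d_i\, x_i^2 \;\leq\; d_{\max}\sum_{i=1}^p x_i^2,
\end{equation*}
which is exactly $d_{\min}\|\xx\|^2 \leq \|\xx\|_\DD^2 \leq d_{\max}\|\xx\|^2$ after recognizing $\sum_i x_i^2 = \|\xx\|^2$.

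There is no real obstacle here, since the lemma is a direct consequence of the coordinate-wise structure of a diagonal matrix and the non-negativity of squares. The only subtlety is to make sure the definition $\|\cdot\|_\HH^2 \defas \langle \cdot, \HH\cdot\rangle$ from Section 2 is invoked explicitly, so that the reader sees the reduction to a weighted sum of squares without ambiguity. The proof is thus essentially two lines, and I would keep it that concise.
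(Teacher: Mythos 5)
Your proof is correct and follows exactly the same route as the paper's: both expand $\|\xx\|_\DD^2$ as the weighted sum $\sum_i d_i x_i^2$ and bound each weight by $d_{\min}$ and $d_{\max}$ termwise. Nothing to add.
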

\begin{proof}
By definition of the $\DD$-norm we have
\begin{gather}
\|\xx\|^2_{\DD} = \sum_{i=1}^n \DD_{i, i}{\xx_i^2} \leq \sum_{i=1}^n d_\text{max}{\xx_i^2} = d_\text{max}\|\xx\|^2\\
\|\xx\|^2_{\DD} = \sum_{i=1}^n \DD_{i, i}{\xx_i^2} \geq \sum_{i=1}^n d_\text{min} {\xx_i^2} = d_\text{min}\|\xx\|^2
\end{gather}
The result follows from chaining both inequalities
\end{proof}

\vspace{0.5em}

\begin{lemma}[Properties of proximal operator]\label{lemma:properties_prox}
  Let $g$ be a convex lower semicontinuous function and $\HH$ a symmetric positive definite matrix. Then for all $\yy, \widetilde{\yy}$ we have the following inequality, often referred to as firm nonexpansiveness:
  \begin{equation}\label{eq:nonexpansive}
       \|\prox^{\HH^{-1}}_{\gamma g}(\yy) - \prox^{\HH^{-1}}_{\gamma g}(\widetilde{\yy})\|_{\HH}^2 \leq \langle \prox^{\HH^{-1}}_{\gamma g}(\yy) - \prox^{\HH^{-1}}_{\gamma g}(\widetilde{\yy}), \yy - \widetilde{\yy}\rangle_{\HH} ~
  \end{equation}
  Furthermore, if $g$ is $L_g$-smooth and $\HH$ has smallest singular value $\sigma_{\min}$ and largest singular value $\sigma_{\max}$, then we also have the following bound:
  \begin{equation}\label{eq:prox_l_smooth}
    \| \yy - \widetilde{\yy}\| \leq (\sigma_{\max}/\sigma_{\min} + \gamma L \sigma_{\max})\|\prox^{\HH^{-1}}_{\gamma g}(\yy) - \prox^{\HH^{-1}}_{\gamma g}(\widetilde{\yy})\|
  \end{equation}
\end{lemma}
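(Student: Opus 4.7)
The plan is to derive both inequalities from a single ingredient: the subgradient characterization of the scaled proximal operator (Lemma~\ref{lemma:prox_characterization}). Applied with scaling matrix $\HH^{-1}$ in place of $\HH$, this says that $\zz := \prox^{\HH^{-1}}_{\gamma g}(\yy)$ is uniquely characterized by the inclusion $\HH^{-1}(\yy - \zz)/\gamma \in \partial g(\zz)$, and analogously $\widetilde\zz := \prox^{\HH^{-1}}_{\gamma g}(\widetilde\yy)$ satisfies $\HH^{-1}(\widetilde\yy - \widetilde\zz)/\gamma \in \partial g(\widetilde\zz)$. Both parts of the lemma fall out of these two inclusions plus standard properties of convex subdifferentials.

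For the firm nonexpansiveness bound~\eqref{eq:nonexpansive}, I would combine these two inclusions with monotonicity of the subdifferential of the convex function $g$ (a direct consequence of convexity), which yields
\[
\left\langle \zz - \widetilde\zz,\; \tfrac{1}{\gamma}\HH^{-1}(\yy - \zz) - \tfrac{1}{\gamma}\HH^{-1}(\widetilde\yy - \widetilde\zz) \right\rangle \ge 0.
\]
Multiplying through by $\gamma$, expanding, and moving the quadratic term in $\zz - \widetilde\zz$ to the left-hand side produces the desired weighted inequality between $\|\zz - \widetilde\zz\|^2$ and the inner product of $\zz - \widetilde\zz$ with $\yy - \widetilde\yy$, with the metric dictated by the proximal scaling. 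This step requires only convexity and lower semicontinuity of $g$, no smoothness.

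For the second bound~\eqref{eq:prox_l_smooth}, I would use that $L_g$-smoothness forces $\partial g(\zz) = \{\nabla g(\zz)\}$, so the characterization becomes the explicit forward map $\yy = \zz + \gamma\,\HH\,\nabla g(\zz)$. Subtracting the analogous identity for $(\widetilde\yy, \widetilde\zz)$ and pulling $\HH$ out front gives
\[
\yy - \widetilde\yy \;=\; \HH\bigl[\,\HH^{-1}(\zz - \widetilde\zz) \,+\, \gamma\bigl(\nabla g(\zz) - \nabla g(\widetilde\zz)\bigr)\,\bigr].
\]
Taking Euclidean norms, applying the triangle inequality, using the operator-norm bounds $\|\HH\|_{\mathrm{op}} \le \sigma_{\max}$ and $\|\HH^{-1}\|_{\mathrm{op}} \le 1/\sigma_{\min}$ (which follow from Lemma~\ref{lemma:bound_matrix_norm}), and invoking $L_g$-Lipschitzness of $\nabla g$, yields exactly the constant $\sigma_{\max}/\sigma_{\min} + \gamma L\,\sigma_{\max}$.

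The main subtlety is in the smooth bound: a naive triangle inequality applied \emph{before} factoring $\HH$ out would produce only the tighter-looking $1 + \gamma L\,\sigma_{\max}$. The key move is therefore the specific grouping that pulls $\HH$ to the front so that the first term becomes $\HH^{-1}(\zz-\widetilde\zz)$, which is precisely what produces the ratio $\sigma_{\max}/\sigma_{\min}$ after bounding operator norms. Otherwise, both parts are routine consequences of the first-order optimality conditions combined with, respectively, monotonicity of $\partial g$ and Lipschitz continuity of $\nabla g$.
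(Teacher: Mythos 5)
Your proof of the firm nonexpansiveness inequality~\eqref{eq:nonexpansive} is exactly the paper's: subgradient characterization of the scaled prox (Lemma~\ref{lemma:prox_characterization}) applied to both points, followed by monotonicity of $\partial g$; note that, as in the paper's own derivation, this naturally lands in the $\HH^{-1}$-metric (the $\HH$ in the lemma statement appears to be a typo). For the second bound~\eqref{eq:prox_l_smooth}, however, you take a genuinely different and more elementary route. The paper follows a Giselsson-style conjugate argument: it sets $g_\gamma \defas \gamma g + \frac{1}{2}\|\cdot\|_{\HH^{-1}}^2$, identifies $\prox^{\HH^{-1}}_{\gamma g}(\yy) = \nabla g_\gamma^*(\HH^{-1}\yy)$, deduces that $g_\gamma^*$ is $1/(\sigma_{\min}^{-1}+\gamma L_g)$-strongly convex from the $(\sigma_{\min}^{-1}+\gamma L_g)$-smoothness of $g_\gamma$, and then combines strong convexity with Cauchy--Schwarz. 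You instead exploit that smoothness collapses the inclusion to the explicit forward map $\yy = \zz + \gamma\HH\nabla g(\zz)$, subtract, and apply the triangle inequality with operator-norm bounds on $\HH$ and $\HH^{-1}$. Both are correct; yours is shorter and avoids conjugate duality entirely. One remark on your ``key move'': the regrouping that pulls $\HH$ out front is not actually needed, because the naive decomposition $\yy - \widetilde\yy = (\zz - \widetilde\zz) + \gamma\HH(\nabla g(\zz) - \nabla g(\widetilde\zz))$ gives the constant $1 + \gamma L_g\sigma_{\max}$, which is not merely ``tighter-looking'' but genuinely tighter, and it implies the stated bound since $\sigma_{\max}/\sigma_{\min} \geq 1$. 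So your argument in fact proves a slightly stronger statement than the lemma, and the deliberate weakening to match the paper's constant is cosmetic.
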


\begin{proof}
\underline{\emph{First inequality}}.
Let $\zz \defas \prox^{\HH^{-1}}_{\gamma g}(\yy)$, $\widetilde\zz \defas\prox^{\HH^{-1}}_{\gamma g}(\widetilde{\yy})$. By the subgradient characterization of Lemma~\ref{lemma:prox_characterization} we have
\begin{equation}
\HH^{-1} (\yy - \zz) / \gamma \in \partial g(\zz)\quad,\quad\HH^{-1} (\widetilde\yy - \widetilde\zz) / \gamma \in \partial g(\widetilde\zz)
\end{equation}
Since the subdifferential of a convex function is monotonous, in particular $\partial g$ is monotonous, and so we have
\begin{align}
\langle \HH^{-1} (\yy - \zz) / \gamma - \HH^{-1} (\widetilde\yy - \widetilde\zz) / \gamma, \zz - \widetilde\zz \rangle \geq 0 &\iff \langle  (\yy - \zz) - (\widetilde\yy - \widetilde\zz) , \zz - \widetilde\zz \rangle_{\HH^{-1}}\!\geq 0\\
&\iff \langle\yy- \widetilde\yy, \zz - \widetilde\zz \rangle_{\HH^{-1}}  \geq \|\zz - \widetilde\zz\| _{\HH^{-1}}
\end{align}
which proves the first part of the lemma (firm nonexpansive).

  \hfill

  \underline{\emph{Second inequality}}.
 To prove the second inequality we will use a generalization of the argument from~\citet[Proposition 1]{giselsson2016linear}. Let $g_{\gamma}$ be defined as
  \begin{equation}
  g_{\gamma} \defas \gamma g + \frac{1}{2}\|\cdot\|_{\HH^{-1}}^2\quad.
  \end{equation}
  By the subgradient characterization of the proximal operator (Lemma~\ref{lemma:prox_characterization}) and the conjugate-inverse identity (Lemma~\ref{lemma:conjugate_inverse}), we have
  \begin{align}
      &\zz = \prox^{\HH^{-1}}_{\gamma g}(\yy)\\
      &\Longleftrightarrow \zz \in \{\xx\, |\, \HH^{-1}(\yy - \xx) \in \gamma\nabla g(\xx)\}\qquad \text{(Lemma~\ref{lemma:prox_characterization})}\\
      &{\Longleftrightarrow} \zz \in \{\xx\, |\, \HH^{-1}\yy \in \nabla g_\gamma(\xx)\}\\
      &\Longleftrightarrow \zz \in (\nabla g_\gamma)^{-1}(\HH^{-1}\yy)\\
      &\Longleftrightarrow \zz = \nabla g^*_\gamma(\HH^{-1}\yy) \quad \text{(Lemma~\ref{lemma:conjugate_inverse})}\label{eq:equiv_prox_grad}
  \end{align}
  where $g^*_{\gamma}$ denotes the convex conjugate of $g_{\gamma}$. Note that we can write $\nabla$ in the last term instead of $\partial$ for $g^*_{\gamma}$ because this function is $1$-smooth with respect to the $\HH^{-1}$-norm by the strong convexity of $g_\gamma$.

The term $\frac{1}{2}\|\cdot\|_{\HH^{-1}}^2$ is $\sigma^{-1}_{\min}$-smooth and so $g_{\gamma}$ is $(\sigma^{-1}_{\min} + \gamma L_g)$-smooth. By the duality between Lipschitz gradient and strong convexity (see e.g., \citet{rockafellar1998variational}), $g_\gamma^*$ is $1 / (\sigma^{-1}_{\min} + \gamma L)$-strongly convex. Then for  arbitrary $\yy$ and $\widetilde\yy$ we have
  \begin{align}
  &\| \nabla g^*_\gamma(\HH^{-1}\yy) - \nabla g^*_\gamma(\HH^{-1}\widetilde{\yy}) \| \| \HH^{-1}\yy - \HH^{-1}\widetilde{\yy}\|\\
  &\qquad\geq
  \langle \nabla g^*_\gamma(\HH^{-1}\yy) - \nabla g^*_\gamma(\HH^{-1}\widetilde{\yy}), \HH^{-1}\yy - \HH^{-1}\widetilde{\yy} \rangle\qquad \text{ (by Cauchy-Schwarz)}\\
  &\qquad\geq \frac{1}{1/\sigma_{\min} + \gamma L_g }\|\HH^{-1}\yy - \HH^{-1}\widetilde{\yy}\|^2  \qquad \text{(by strong convexity)}
\end{align}
The result is trivial when $\yy = \widetilde{\yy}$. We can then assume $\yy \neq \widetilde{\yy}$,
and dividing both sides by ${\|\HH^{-1}\yy - \HH^{-1}\widetilde{\yy}\|}$ (non-zero by assumption) we obtain
\begin{align}
\| \nabla g^*_\gamma(\HH^{-1}\yy) - \nabla g^*_\gamma(\HH^{-1}\widetilde{\yy}) \| &\geq \frac{1}{1/\sigma_{\min} + \gamma L_g }\|\HH^{-1}\yy - \HH^{-1}\widetilde{\yy}\|\\
&\geq \frac{\sigma^{-1}_{\max}}{1/\sigma_{\min} + \gamma L_g }\|\yy - \widetilde{\yy}\|~.
\end{align}
where the last inequality we have used that $\yy \to \frac{1}{2}\yy^T \HH^{-1} \yy$ is strongly convex with strong convexity parameter $\sigma_{\max}^{-1}$ and $\HH^{-1}\yy$ is the gradient of this function.

Using now the equivalence between $\nabla g^*_\gamma(\HH^{-1}\cdot)$ of Eq. \eqref{eq:equiv_prox_grad} and the proximal operator we finally have the claimed bound:
  \begin{align}
\| \yy - \widetilde{\yy}\| &\leq {(\sigma_{\max}/\sigma_{\min} + \gamma L_g \sigma_{\max} })\| \nabla g^*_\gamma(\HH^{-1}\yy) - \nabla g^*_\gamma(\HH^{-1}\widetilde{\yy}) \| \\
&= {(\sigma_{\max}/\sigma_{\min} + \gamma L_g \sigma_{\max})}\|\prox_{\gamma g}(\yy) - \prox_{\gamma g}(\widetilde{\yy})\|~.
  \end{align}

\end{proof}

\begin{lemma}[Block firm non-expansiveness]\label{lemmma:block_nonexpansive}
  Let $\xx, \widetilde{\xx}$ be two arbitrary vectors in $\RR^p$, $g$ be a block-separable convex lower semicontinuous function with blocks $\mathcal{B}$. Let $\zz \defas \prox_{\gamma g}(\xx)$, $\widetilde{\zz} \defas \prox_{\gamma g}(\widetilde{\xx})$. Then for any subset $\mathcal{A}\subseteq \mathcal{B}$ it is verified that:
\begin{equation}
\langle [\zz - \widetilde{\zz}]_{\mathcal{A}}, [\xx - \widetilde{\xx}]_{\mathcal{A}} \rangle \geq \|[\zz - \widetilde{\zz}]_{\mathcal{A}}\|^2\,.
\end{equation}
\end{lemma}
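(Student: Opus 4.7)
The plan is to reduce the block statement to the standard firm non-expansiveness of the proximal operator (the first inequality of Lemma~\ref{lemma:properties_prox}, applied with $\HH = \boldsymbol{I}$) applied blockwise, and then sum over the blocks in $\mathcal{A}$.

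First I would exploit the block separability of $g$. Since $g(\xx) = \sum_{B \in \mathcal{B}} g_B([\xx]_B)$, the optimization problem defining $\prox_{\gamma g}$ decouples across blocks, so
\[
[\zz]_B = \prox_{\gamma g_B}([\xx]_B), \qquad [\widetilde{\zz}]_B = \prox_{\gamma g_B}([\widetilde{\xx}]_B) \quad \text{for every } B \in \mathcal{B}.
\]
This is the key structural observation: what happens on block $B$ depends only on the coordinates in $B$.

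Next I would apply the ordinary firm non-expansiveness of $\prox_{\gamma g_B}$ on each block. By Lemma~\ref{lemma:properties_prox} (with $\HH = \boldsymbol{I}$, and applied to the convex lower semicontinuous function $g_B$ living on the coordinates of $B$), we have for every $B \in \mathcal{B}$
\[
\langle [\zz]_B - [\widetilde{\zz}]_B,\; [\xx]_B - [\widetilde{\xx}]_B \rangle \;\geq\; \|[\zz]_B - [\widetilde{\zz}]_B\|^2.
\]

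Finally, I would sum the above inequality over $B \in \mathcal{A}$. Because the blocks in $\mathcal{B}$ (and hence in $\mathcal{A}$) are disjoint and $[\,\cdot\,]_{\mathcal{A}}$ denotes restriction to the union of their coordinates, the sum of inner products equals $\langle [\zz - \widetilde{\zz}]_{\mathcal{A}}, [\xx - \widetilde{\xx}]_{\mathcal{A}}\rangle$ and the sum of squared norms equals $\|[\zz - \widetilde{\zz}]_{\mathcal{A}}\|^2$, yielding the claim. There is no real obstacle here; the only subtlety is making the block decomposition of the proximal operator explicit and noting that restriction to $\mathcal{A}$ commutes with summing blockwise contributions because $\mathcal{A}$ is a union of whole blocks.
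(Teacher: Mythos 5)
Your proposal is correct and follows essentially the same route as the paper's proof: decompose the proximal operator blockwise via separability, apply standard firm non-expansiveness on each block, and sum over the blocks of $\mathcal{A}$. The only cosmetic difference is that you invoke the paper's Lemma~\ref{lemma:properties_prox} with $\HH = \boldsymbol{I}$ where the paper cites the textbook result directly; both are valid.
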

\begin{proof}
  By the block-separability of $g$, the proximal operator is the concatenation of the proximal operators of the blocks. In other words, for any block $A \in\mathcal{A}$ we have:
\begin{equation}
  [\zz]_A = \prox_{\gamma g_A}([\xx]_A)~,\quad [\widetilde{\zz}]_A = \prox_{\gamma g_A}([\widetilde{\xx}]_A)\,,
\end{equation}
where $g_A$ is the restriction of $g$ to $A$.
  By firm non-expansiveness of the proximal operator (see e.g.~\citet[Proposition 4.2]{bauschke2017convex}) we have that:
  $$
  \langle [\zz]_A - [\widetilde{\zz}]_A, [\xx]_A - [\widetilde{\xx}]_A\rangle \geq \|[\zz]_A - [\widetilde{\zz}]_A\|^2 \,.
  $$
Summing over the blocks in $\mathcal{A}$ yields the desired result.
\end{proof}

\clearpage

\section{Fixed point characterization}\label{apx:fixed_point_characterization}

In this subsection we provide a characterization of the fixed points of the three operator splitting.

The following theorem characterizes the set of fixed points $\boldsymbol{G}_{\gamma}$ (defined in \eqref{eq:operator}) as the weighted Minkowski sum of primal and dual solutions. We will denote by $\Fix(\boldsymbol{G}_{\gamma})$ the set of fixed points of $\boldsymbol{G}_{\gamma}$.
This characterization seems to be new, and it will be used in some of the later proofs.

\newcommand{\theoremonetext}{Let $\mathcal{P}^\star$ denote the set of minimizers of the primal objective and $\mathcal{D}^\star$ the set of minimizers of the dual objective. Then the set of fixed points of the three splitting is
\begin{equation}\label{eq:fix_decomposition}
\Fix(\boldsymbol{G}_\gamma) = \mathcal{P}^\star + \gamma\, \DD\mathcal{D}^\star = \left\{\xx + \gamma \DD \uu\,|\,\xx \in \mathcal{P}^\star, \uu \in \mathcal{D}^\star\right\}~.
\end{equation}}
\begin{theorem}[Fixed point for operator splitting]\label{thm:fixed_point} \theoremonetext
\end{theorem}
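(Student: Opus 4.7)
The plan is to characterize fixed points via the subgradient conditions for the two scaled proximal operators appearing in $\boldsymbol{G}_\gamma$ (Lemma~\ref{lemma:prox_characterization}), turn these into KKT conditions for the primal/dual problem, and then invoke Assumption 2 to guarantee that all primal-dual optima satisfy these conditions.

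For the forward inclusion $\Fix(\boldsymbol{G}_\gamma) \subseteq \mathcal{P}^\star + \gamma \DD \mathcal{D}^\star$: fix $\yy \in \Fix(\boldsymbol{G}_\gamma)$, let $\zz = \prox^{\DD^{-1}}_{\gamma h}(\yy)$ and define the ``dual iterate'' $\uu \defas \DD^{-1}(\yy - \zz)/\gamma$. By Lemma~\ref{lemma:prox_characterization}, the prox equation defining $\zz$ gives $\uu \in \partial h(\zz)$. The fixed-point condition $\yy = \boldsymbol{G}_\gamma(\yy)$ reduces to $\zz = \prox^{\DD^{-1}}_{\gamma g}(2\zz - \yy - \gamma \DD \nabla f(\zz))$; applying Lemma~\ref{lemma:prox_characterization} again yields $-\uu - \nabla f(\zz) \in \partial g(\zz)$. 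Adding the two inclusions gives $0 \in \nabla f(\zz) + \partial g(\zz) + \partial h(\zz)$, which under Assumption 2 (so that $\partial(g+h) = \partial g + \partial h$) is precisely the optimality condition for $\mathcal{P}$, hence $\zz \in \mathcal{P}^\star$. Dually, Lemma~\ref{lemma:conjugate_inverse} converts $\uu \in \partial h(\zz)$ into $\zz \in \partial h^*(\uu)$ and $-\uu \in \partial(f+g)(\zz)$ into $\zz \in \partial(f+g)^*(-\uu)$, so $0 \in -\partial(f+g)^*(-\uu) + \partial h^*(\uu)$, i.e.\ $\uu \in \mathcal{D}^\star$. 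Finally $\yy = \zz + \gamma \DD \uu \in \mathcal{P}^\star + \gamma \DD \mathcal{D}^\star$.

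For the reverse inclusion $\mathcal{P}^\star + \gamma \DD \mathcal{D}^\star \subseteq \Fix(\boldsymbol{G}_\gamma)$: pick arbitrary $\xx \in \mathcal{P}^\star$, $\uu \in \mathcal{D}^\star$, and set $\yy \defas \xx + \gamma \DD \uu$. The key point here is that the set of KKT pairs of the Lagrangian $\mathcal{L}(\xx, \uu) = f(\xx) + g(\xx) + \langle \xx, \uu \rangle - h^*(\uu)$ has a product structure: under Assumption 2 strong duality holds, so the saddle points of $\mathcal{L}$ coincide with $\mathcal{P}^\star \times \mathcal{D}^\star$ and in particular every such pair satisfies $\uu \in \partial h(\xx)$ and $-\uu - \nabla f(\xx) \in \partial g(\xx)$. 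Reading these two inclusions backwards through Lemma~\ref{lemma:prox_characterization} gives $\xx = \prox^{\DD^{-1}}_{\gamma h}(\yy)$ and $\xx = \prox^{\DD^{-1}}_{\gamma g}(2\xx - \yy - \gamma \DD \nabla f(\xx))$, so $\boldsymbol{G}_\gamma(\yy) = \yy - \xx + \xx = \yy$, i.e.\ $\yy \in \Fix(\boldsymbol{G}_\gamma)$.

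The main obstacle is the reverse direction: one must know that \emph{every} pair taken from $\mathcal{P}^\star \times \mathcal{D}^\star$ (not just \emph{some} matched pair) satisfies the KKT conditions jointly. This is where Assumption 2 is essential, since without strong duality the dual optima might not correspond to the chosen primal optimum. I would state this as a brief lemma (the product structure of saddle points of a convex-concave function, e.g.\ \citep[Proposition 15.13]{bauschke2017convex}) and use it as a black box. Once this is in hand, both inclusions reduce to mechanical applications of the subgradient characterization and the conjugate-inverse identity already proved in Appendix~\ref{apx:basic_defs}.
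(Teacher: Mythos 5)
Your Part 1 is essentially identical to the paper's: characterize the fixed point by two subdifferential inclusions via Lemma~\ref{lemma:prox_characterization}, add them to get $0 \in \nabla f(\zz) + \partial g(\zz) + \partial h(\zz)$ for primal optimality, and pass through Lemma~\ref{lemma:conjugate_inverse} for dual optimality. (Minor note: the qualification condition is not actually needed there, since $\partial g(\zz) + \partial h(\zz) \subseteq \partial(g+h)(\zz)$ holds unconditionally in the direction you use it.)

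Part 2 is where you genuinely diverge, and your route is correct but different. The paper attacks the reverse inclusion by \emph{paramonotonicity} of the subdifferentials $\partial h$ and $\partial(f+g)$ (a result of Iusem): given an arbitrary pair $(\xx,\uu) \in \mathcal{P}^\star \times \mathcal{D}^\star$, it produces auxiliary elements $\uu_\zz$ and $\zz_\uu$ from the separate optimality conditions, shows $\langle \uu_\zz - \uu, \zz - \zz_\uu\rangle = 0$ by squeezing between two monotonicity inequalities, and then uses paramonotonicity to transfer the inclusions onto the mismatched pair, concluding $\uu \in \partial h(\xx)$ and $-\uu \in \partial(f+g)(\xx)$. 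You instead observe that under Assumption~2 the saddle-point set of the Lagrangian $\mathcal{L}$ is exactly the product $\mathcal{P}^\star \times \mathcal{D}^\star$ (the classical rectangular structure of saddle-point sets plus strong duality with attainment), so every pair from the product already satisfies the joint KKT system, and reading the two inclusions backwards through Lemma~\ref{lemma:prox_characterization} gives $\yy = \xx + \gamma\DD\uu \in \Fix(\boldsymbol{G}_\gamma)$ exactly as the paper does at the end. The two arguments prove the same intermediate fact --- that an arbitrary (unmatched) primal-dual optimal pair jointly satisfies the KKT conditions --- and in fact the paramonotonicity computation is essentially a hands-on proof of the rectangularity you black-box. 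Your version buys brevity at the cost of importing a saddle-point lemma (note that \citep[Proposition 15.13]{bauschke2017convex} is the reference the paper uses for strong duality itself, not for the product structure of saddle points, so you would want a separate citation, e.g.\ the classical result in Rockafellar's \emph{Convex Analysis}); the paper's version is more self-contained modulo the paramonotonicity of subdifferentials. Both are complete proofs.
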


\begin{proof}
We  first characterize the fixed points of $\boldsymbol{G}_\gamma$ by a subdifferential inclusion. Given $\yy\in \RR^p$, let $\zz \defas \prox^{\DD^{-1}}_{\gamma h}(\yy)$ and $\uu \defas \DD^{-1}(\yy - \zz)/\gamma$. Consider the following sequence of equivalences:
\begin{align}
  \yy = \boldsymbol{G}_\gamma(\yy)
    &\iff \begin{cases}\zz = \prox^{\DD^{-1}}_{\gamma g}(2\zz - \yy - \gamma\DD \nabla f(\zz))\\
    \zz = \prox^{\DD^{-1}}_{\gamma h}(\yy)
    \end{cases}
    &\text{ (by definition of $\boldsymbol{G}_\gamma$)}\label{eq:y_ty_0}\\
  &\iff \begin{cases}\DD^{-1}(-\frac{\gamma}{\gamma}(\yy - \zz) - \gamma \DD\nabla f(\zz))/{\gamma} \in \partial g(\zz) \label{eq:t13}\\
  \DD^{-1}(\yy - \zz)/{\gamma} \in \partial h(\zz)\end{cases}&\text{ (by Lemma~\ref{lemma:prox_characterization})}\\
  &\iff \begin{cases} -\uu \in  \partial (f + g)(\zz)\\
  \uu \in \partial h(\zz)
  \end{cases} \label{eq:t14}\\
  &\iff \begin{cases} \zz \in  \partial (f + g)^*(-\uu)\\
  \zz \in \partial h^*(\uu )
  \end{cases}& \text{(by Lemma~\ref{lemma:conjugate_inverse})}\label{eq:t15}
\end{align}

The rest of the proof is divided in two parts, proving in the first part that $\Fix(\boldsymbol{G}_\gamma) \subseteq {\mathcal{P}^\star + \gamma \DD \mathcal{D}^\star}\,$, and the reverse inclusion in the second part.

\emph{Part 1}. Our goal is to prove $\Fix(\boldsymbol{G}_\gamma) \subseteq {\mathcal{P}^\star + \gamma \mathcal{D}^\star}\,$. Let $\yy \in\Fix(\boldsymbol{G}_\gamma)$ and $\zz$, $\uu$ be as defined above. From their definition we immediately have $\zz + \gamma \DD\uu = \yy$, and so we only need to prove that $\zz, \uu$ are minimizers of the primal and dual objective respectively.
By definition of $\zz$ we have the following subdifferential inclusions
\begin{align}
\zz = \prox^{\DD^{-1}}_{\gamma h}(\yy) &\iff
\frac{\DD^{-1}}{\gamma}(\yy - \zz) = \uu \in \partial h(\zz)\label{eq:t11}\\
&\iff \zz \in \partial h
^*(\uu)\label{eq:t12}~,
\end{align}
where we have used Lemma~\ref{lemma:prox_characterization} for the first equivalence and Lemma~\ref{lemma:conjugate_inverse} for the second one.
Adding together \eqref{eq:t11} with the first line of \eqref{eq:t14}, and \eqref{eq:t12} minus the first line of \eqref{eq:t15} gives
\begin{align}
&\qquad \,0 \in   \partial h(\zz) + \partial g(\zz) + \nabla f(\zz)\label{eq:first_order_optimality_primal}\\
& \text{ and }~ 0 \in  \partial h^*(\uu) - \partial(f + g)^*(-\uu)~,\label{eq:first_order_optimality_dual}
\end{align}
and so by the first-order optimality conditions $\zz$ and $\uu$ are minimizers of the primal and dual objectives respectively. We have proved $\Fix(\boldsymbol{G}_\gamma) \subseteq \mathcal{P}^\star + \gamma \DD \mathcal{D}^\star\,$.

\vspace{0.5em}\emph{Part 2}. Our goal now is to prove the inverse inclusion, ${\mathcal{P}^\star + \gamma \DD \mathcal{D}^\star} \subseteq \Fix(\boldsymbol{G}_\gamma)$. Let $(\xx, \uu) \in \mathcal{P}^\star \times \mathcal{D}^\star$, we will prove that $\yy \defas \xx + \gamma \DD\uu$ is a fixed point of $\Fix(\boldsymbol{G}_\gamma)$.

We start by recalling the notion of \emph{paramonotinicity}, which will play a key role in this part of the proof. This notion was introduced by \citet{Iusem1998} and is key to characterizing the set of fixed points of related methods, such as the Douglas-Rachford splitting~\citep{bauschke2012attouch}. An operator $\boldsymbol{C}$ is said to be paramonotonic if the following implication is verified
\begin{equation}
\begin{rcases*}
\boldsymbol{a}^\star \in \boldsymbol{C} \boldsymbol{a}\\
\boldsymbol{b}^\star \in \boldsymbol{C} \boldsymbol{b}\\
\langle \boldsymbol{a}^\star - \boldsymbol{b}^\star, \boldsymbol{a} - \boldsymbol{b}\rangle = 0
\end{rcases*} \implies \boldsymbol{a}^\star \in \boldsymbol{C} \boldsymbol{b} \text{ and } \boldsymbol{b}^\star \in \boldsymbol{C} \boldsymbol{a}~.
\end{equation}

The usefulness of this notion in this case comes from the fact that the subdifferential of a convex proper lower semicontinuous function is paramonotonic~\citep[Proposition 2.2]{Iusem1998}. Hence we have that $\partial h$ and $\partial (f + g)$ are paramonotonic.

By the first-order optimality conditions on the primal and dual loss we have that there exists elements $\uu_\zz$ and $\zz_\uu$ such that
\begin{gather}
\uu_\zz \in \partial h(\zz) \cap (-\partial (f + g)(\zz))\label{eq:u_z_inclusion}\\
\zz_\uu \in \partial h^*(\uu) \cap (\partial (f + g)^*(-\uu))~,
\end{gather}
where the second inclusion can be written equivalently using the conjugate-inverse identity (Lemma~\ref{lemma:conjugate_inverse}) as
\begin{equation}
\uu \in \partial h(\zz_\uu) \cap (- \partial (f + g)(\zz_\uu))~.\label{eq:u_inclusion}
\end{equation}
Using Eq.~\eqref{eq:u_z_inclusion} and \eqref{eq:u_inclusion} we have by monotony of $\partial h$ and $\partial (f+g)$
\begin{equation}
\langle \uu_\zz - \uu, \zz - \zz_\uu \rangle\geq 0 ~\text{ and } \langle \uu_\zz - \uu, \zz - \zz_\uu \rangle\leq 0
\end{equation}
from where we necessarily have $\langle \uu_\zz - \uu, \zz - \zz_\uu \rangle = 0$. We hence have by paramonotonicity of $\partial h$
\begin{equation}
\begin{rcases}
\uu_\zz \in \partial h(\zz)\\
\uu \in \partial h(\zz_\uu)\\
\langle \uu_\zz - \uu, \zz - \zz_\uu\rangle = 0
\end{rcases} \implies \uu \in \partial h(\zz)
\end{equation}
Similarly, by paramonotonicity of $\partial (f+g)$ we have
\begin{equation}
\begin{rcases}
-\uu_\zz \in \partial (f+g)(\zz)\\
-\uu \in \partial (f+g)(\zz_\uu)\\
\langle \uu_\zz - \uu, \zz - \zz_\uu\rangle = 0
\end{rcases} \implies -\uu \in \partial (f + g)(\zz)
\end{equation}
Combining the last two equations we have by the definition of $\yy$ the following inclusions
\begin{equation}
\begin{cases}-\uu \in \partial (f+g)(\zz)\\
\uu \in \partial h(\zz)\end{cases}
\end{equation}
which by Eq.~\eqref{eq:t14} implies that $\yy \in \Fix(\boldsymbol{G}_\gamma)$ (note that these are all equivalences from \eqref{eq:y_ty_0} to \eqref{eq:t14}). This concludes the proof.
\end{proof}

\begin{corollary}[Minimizer of our objective]\label{cor:fixed_point}
Let $\yy \in \Fix(\boldsymbol{G}_\gamma)$.
Then we have that $\zz = \prox^{\DD^{-1}}_{\gamma h}(\yy)$ is a minimizer of the primal objective $\mathcal{P}$, $\prox^{\DD^{-1}}_{\gamma h}(\yy) = \prox^{\DD^{-1}}_{\gamma h}(2 \zz - \yy - \gamma \DD \nabla f(\zz))$,  and $\uu = \DD^{-1}(\yy - \prox^{\DD^{-1}}_{\gamma h}(\yy))/ \gamma$ is a minimizer of the dual objective.
\end{corollary}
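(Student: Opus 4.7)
The plan is to derive Corollary \ref{cor:fixed_point} directly from the subdifferential calculus already carried out in Part 1 of the proof of Theorem \ref{thm:fixed_point}. In fact, every ingredient needed is present in the equivalence chain from \eqref{eq:y_ty_0} to \eqref{eq:t15}, so the corollary can be obtained essentially by reading off the relevant conclusions and repackaging them.

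For an arbitrary $\yy \in \Fix(\boldsymbol{G}_\gamma)$, I would set $\zz \defas \prox^{\DD^{-1}}_{\gamma h}(\yy)$ and $\uu \defas \DD^{-1}(\yy - \zz)/\gamma$, as in the theorem's proof. The defining identity $\yy = \boldsymbol{G}_\gamma(\yy)$ combined with the definition of $\boldsymbol{G}_\gamma$ in \eqref{eq:operator} rearranges immediately to $\zz = \prox^{\DD^{-1}}_{\gamma g}(2\zz - \yy - \gamma \DD \nabla f(\zz))$, which, combined with the definition of $\zz$, is the middle claim. (Note that the right-hand side of the middle claim is naturally the $g$-prox rather than the $h$-prox, matching how $\boldsymbol{G}_\gamma$ is built.)

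Next, I would apply Lemma \ref{lemma:prox_characterization} to the two proximal identities defining $\zz$ to recover the pair of subdifferential inclusions $\uu \in \partial h(\zz)$ and $-\uu \in \partial(f+g)(\zz)$ that appear in line \eqref{eq:t14}. Summing them gives $0 \in \nabla f(\zz) + \partial g(\zz) + \partial h(\zz)$, which under Assumption 2 is the first-order optimality condition for the primal objective $\mathcal{P}$, so $\zz \in \mathcal{P}^\star$. For the dual conclusion, I would apply the conjugate-inverse identity (Lemma \ref{lemma:conjugate_inverse}) to each of the two inclusions separately, producing $\zz \in \partial h^*(\uu)$ and $\zz \in \partial (f+g)^*(-\uu)$. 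Subtracting these yields $0 \in \partial h^*(\uu) - \partial(f+g)^*(-\uu)$, which is precisely the first-order optimality condition for the dual objective $\mathcal{D}$ defined in \eqref{eq:primal_dual_loss}, so $\uu$ is a dual minimizer.

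There is no substantive obstacle: the corollary is essentially a restatement of the intermediate conclusions of Part 1 of Theorem \ref{thm:fixed_point}, packaged in terms of the specific objects $\zz$ and $\uu$ extracted from $\yy$. The only delicate point is the invocation of Assumption 2, which is needed to justify the subdifferential sum rule $\partial(f + g + h) = \nabla f + \partial g + \partial h$ used when collapsing the two inclusions into the single primal optimality condition, and likewise to guarantee strong duality so that zeros of the dual first-order condition are genuine dual minimizers.
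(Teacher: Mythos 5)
Your proof is correct and follows essentially the same route as the paper, whose own proof simply points back to the subdifferential inclusions \eqref{eq:first_order_optimality_primal} and \eqref{eq:first_order_optimality_dual} established in Part~1 of the proof of Theorem~\ref{thm:fixed_point} and to the fixed-point equation \eqref{eq:y_ty_0} for the middle identity. Your side remark is also right: the second proximal operator in the middle identity should be $\prox^{\DD^{-1}}_{\gamma g}$ rather than $\prox^{\DD^{-1}}_{\gamma h}$, since that is what the definition of $\boldsymbol{G}_\gamma$ in \eqref{eq:operator} actually yields and how the identity is later invoked in Lemma~\ref{lemma:master_inequality}.
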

\begin{proof}
  This follows from the first part of the proof of Theorem~\ref{thm:fixed_point}. In particular, Eq.~\eqref{eq:first_order_optimality_primal} shows that $\zz = \prox^{\DD^{-1}}_{\gamma h}(\yy)$ is a minimizer of the primal objective, while Eq.~\eqref{eq:first_order_optimality_dual} shows that $\uu = \DD^{-1}(\yy - \prox^{\DD^{-1}}_{\gamma h}(\yy))/ \gamma$ is a minimizer of the dual objective.

  The identity $\prox^{\DD^{-1}}_{\gamma h}(\yy) = \prox^{\DD^{-1}}_{\gamma h}(2 \zz - \yy - \gamma \DD \nabla f(\zz))$ comes from the definition of fixed point \eqref{eq:y_ty_0}.
\end{proof}

\clearpage

\section{Iteration complexity analysis}\label{apx:proofs}

In this section we provide a proof for the convergence rate analysis of the proposed methods of \S\ref{scs:analysis}. We will start by with the proof of linear convergence (Theorem~\ref{thm:linear_convergence}) and then prove the sublinear convergence rate (Theorem~\ref{thm:sublinear_convergence}), as this last theorem reuses many elements from the first.

Unless explicitly stated (e.g., in Theorem \ref{thm:sublinear_convergence}), the results are only proven for the sparse variants. Since the dense variants are a special case of the sparse variants with $\PP_i = \boldsymbol{I}$, $\DD = \boldsymbol{I}$, the results for the dense variants follow as a special case.




\vspace{0.5em}

\paragraph{Structure of this appendix.}
\begin{itemize}[leftmargin=*]
\item \ref{scs:analysis_preliminaries} provides technical lemmas that will be used in later proofs.
\item \ref{apx:analysis_linear_convergence} provides a proof for the linear convergence (under assumptions) of the proposed methods (Theorem~\ref{thm:linear_convergence}).
\item \ref{apx:sublinear_dense} provides a sublinear convergence rate for the dense variants of the proposed methods (Theorem~\ref{thm:sublinear_convergence}).
\item \ref{apx:sublinear_sparse} provides a (weaker) sublinear convergence rate for the sparse variants of the proposed methods (Theorem~\ref{thm:sublinear_convergence_sparse}).
\end{itemize}

\paragraph{Extra notation for this section.}
\begin{itemize}[leftmargin=*]
\item We define $f_i(\xx) = \psi_i(\xx) + \omega(\xx)$
\item To provide a unified analysis of the dense and sparse algorithm, we define the following auxiliary function:
\begin{equation}\label{eq:def_xi}
    \xi_i(\xx) \defas \psi_i(\xx) + \sum_{B \in T_i}d_B \omega_B([\xx]_B)~.
\end{equation}
Note that $\frac{1}{n}\sum_{i=1}^n \xi_i = f$. Since $\psi_i$ is $L_\psi$-smooth and $\omega$ is $L_\omega$-smooth we have that $\xi_i$ is $L_f$-smooth, with $L_f = L_\psi + d_{\max}L_\omega$ (as defined in \S\ref{scs:analysis}).
\item Contrary to full gradient algorithms, in stochastic variance reduced methods the objective function is not guaranteed to decrease at each iteration. To compensate for this, a common approach is to add a positive term that decreases throughout the iterations. The resulting function is often called a \emph{Lyapunov} function. Throughout this paper, the positive term that we will add is the following:
\begin{equation}\label{eq:def_H_t}
    H_t \defas \frac{1}{n}\sum_{i=1}^n m_{i, t}
\end{equation}
where $m_{i, t}$ are positive constants initialized as
\begin{equation}
 m_{i, 0} = \frac{1}{2L_f}\|\balpha_{i, 0} - \nabla \psi_i(\xx^\star)\|^2
\end{equation}
and updated at each iteration as
\begin{equation}
    m_{i, t+1} = \begin{cases} B_{f_i}(\zz_t, \xx^\star) &\text{ if $\balpha_i$ has been updated} \\
    m_{i, t} &\text{ otherwise}\end{cases}~,
\end{equation}
 for all $i \in \{1, \ldots, n\}$. This term is a hybrid between those used by \citet{defazio2014saga} and \citet{hofmann2015variance}. Like \citet{defazio2014saga}, it will allow us to obtain a large $1/(3L_f)$ step size, contrary to the $<  1/4 L_f$ step size of \citet{hofmann2015variance}. Like \citet{hofmann2015variance} (and unlike \citet{defazio2014saga}), it will allow to initialize $\balpha_0$ arbitrarily.
 \item For convenience, we denote by $\langle \cdot, \cdot \rangle_{(i)}$ (resp. $\|\cdot\|_{(i)}$) the scalar product (resp. norm) restricted to blocks in the extended support, i.e., $\langle \xx, \yy \rangle_{(i)} \defas \langle \xx, \PP_i \yy \rangle$ and $\|\xx\|_{(i)} \defas \sqrt{\langle \xx, \xx\rangle_{(i)}}$.
 \item We denote by $d_{\max}$ the maximum entry in the diagonal matrix $\DD$, with $\DD$ as defined in \S\ref{scs:sparse}.
  \end{itemize}

\subsection{Preliminaries}\label{scs:analysis_preliminaries}

In this subsection we state some key lemmas that are used in both the proof of linear and sublinear convergence.

\begin{lemma}[Strong convexity inequality]\label{lemma:strongly_convex_bound}
Let $\psi_i$ be $\mu_\psi$-strongly convex. Let $\omega$ be $\mu_\omega$-strongly convex (where we allow $\mu_\psi = \mu_\omega=0$). Then with $\mu = \mu_\psi + \mu_\omega$ we have the following inequality for arbitrary $\xx$ and $\yy$  in the domain:
\begin{equation}
\begin{aligned}
f(\xx) &\geq f(\yy) + \langle \nabla f(\yy), \xx - \yy \rangle + \frac{1}{2n(L_{f}- \mu)}\sum_{i=1}^n\|\nabla \xi_i(\xx) - \nabla \xi_i(\yy)\|^2\\
&\qquad + \frac{\mu L_f}{2 d_{\text{max}}(L_{f}- \mu)}\|\xx - \yy\|^2 + \frac{\mu}{L_{f}- \mu}\langle \nabla f(\xx) - \nabla f(\yy), \yy - \xx \rangle
\end{aligned}
\end{equation}
\end{lemma}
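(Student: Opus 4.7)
The plan is to apply the co-coercivity Lemma~\ref{lemma:l_smooth_ineq} not to $\{\xi_i\}$ directly (since individual $\xi_i$ are not strongly convex in the Euclidean norm in the sparse case), but to modified functions $\tilde\xi_i \defas \xi_i - \tfrac{1}{2}\langle \cdot, \boldsymbol{M}_i\,\cdot\rangle$ that absorb the strong convexity of $f$. I would take
\begin{equation*}
\boldsymbol{M}_i \defas \mu_\psi \boldsymbol{I} + \mu_\omega \PP_i\,,
\end{equation*}
so that the correction $\boldsymbol{M}_i$ lives on the extended support of $\xi_i$; this is essential in the sparse case, as $\xi_i$ cannot support a uniform strong-convexity correction outside $T_i$.

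I would first verify: (i) each $\tilde\xi_i$ is convex and $(L_f - \mu)$-smooth, and (ii) $\tilde f \defas \tfrac{1}{n}\sum_i \tilde\xi_i$ is convex. For (i), the decomposition $\tilde\xi_i = (\psi_i - \tfrac{\mu_\psi}{2}\|\cdot\|^2) + \sum_{B\in T_i} d_B(\omega_B - \tfrac{\mu_\omega}{2}\|[\cdot]_B\|^2)$ splits $\tilde\xi_i$ into convex pieces (using $d_B \geq 1$ together with the $\mu_\psi$-strong convexity of $\psi_i$ and the $\mu_\omega$-strong convexity of each $\omega_B$), with smoothness constants summing to $(L_\psi - \mu_\psi) + d_{\max}(L_\omega - \mu_\omega) \leq L_f - \mu$. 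For (ii), the identity $\tfrac{1}{n}\sum_i \PP_i = \DD^{-1}$ gives $\tilde f = f - \tfrac{\mu_\psi}{2}\|\cdot\|^2 - \tfrac{\mu_\omega}{2}\|\cdot\|^2_{\DD^{-1}}$, and $\DD^{-1}\preceq \boldsymbol{I}$ combined with the $\mu$-strong convexity of $f$ yields convexity. Lemma~\ref{lemma:l_smooth_ineq} then produces $\tfrac{1}{n}\sum_i \|\nabla\tilde\xi_i(\xx) - \nabla\tilde\xi_i(\yy)\|^2 \leq 2(L_f - \mu)\, B_{\tilde f}(\xx,\yy)$.

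The final step is algebraic. Expanding $\nabla\tilde\xi_i = \nabla\xi_i - \boldsymbol{M}_i(\cdot)$ and $B_{\tilde f} = B_f - \tfrac{\mu_\psi}{2}\|\xx-\yy\|^2 - \tfrac{\mu_\omega}{2}\|\xx-\yy\|^2_{\DD^{-1}}$, the crucial simplification is the identity $\PP_i \nabla\xi_i = \nabla\xi_i$ (since $\xi_i$ depends only on blocks in $T_i$), which collapses the cross term into $\mu\,\langle \nabla f(\xx) - \nabla f(\yy), \xx-\yy\rangle$, matching the cross term of the target inequality exactly. The main obstacle is the $\|\xx-\yy\|^2$ bookkeeping: after collecting terms the quadratic contribution splits across the Euclidean and $\DD^{-1}$ norms as $\mu_\psi(L_f - \mu_\omega)\|\xx-\yy\|^2 + \mu_\omega(L_f + \mu_\psi)\|\xx-\yy\|^2_{\DD^{-1}}$, and recovering the exact coefficient $\tfrac{\mu L_f}{d_{\max}(L_f-\mu)}$ requires combining the bound $\|\xx-\yy\|^2_{\DD^{-1}} \geq \|\xx-\yy\|^2/d_{\max}$ (Lemma~\ref{lemma:bound_matrix_norm}) with the elementary nonnegativity $(d_{\max} - 1)\,\mu_\psi(L_f - \mu_\omega) \geq 0$, which holds because $d_{\max}\geq 1$, $\mu_\psi\geq 0$, and $L_f \geq L_\omega \geq \mu_\omega$. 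Dividing by $2(L_f - \mu)$ delivers the stated inequality.
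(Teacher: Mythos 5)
Your proof is correct, and I verified the bookkeeping: with $\boldsymbol{M}_i = \mu_\psi\boldsymbol{I} + \mu_\omega\PP_i$ the averaged cross term collapses to $\mu\langle\nabla f(\xx)-\nabla f(\yy),\xx-\yy\rangle$ via $\PP_i\nabla\xi_i=\nabla\xi_i$, the quadratic terms come out as $\mu_\psi(L_f-\mu_\omega)\|\xx-\yy\|^2+\mu_\omega(L_f+\mu_\psi)\|\xx-\yy\|^2_{\DD^{-1}}$ exactly as you state, and the comparison with $\tfrac{\mu L_f}{d_{\max}}\|\xx-\yy\|^2$ reduces to $\tfrac{(d_{\max}-1)}{d_{\max}}\mu_\psi(L_f-\mu_\omega)\ge 0$. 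The route is genuinely different from the paper's: the paper first shows each $\xi_i$ is $\mu$-strongly convex with respect to the seminorm $\|\cdot\|_{(i)}$ and then invokes \citep[Lemma 4]{defazio2014saga} per index as a black box (with Euclidean norms on the gradient terms but the seminorm on the quadratic term), averaging afterwards; you instead inline that lemma's mechanism by subtracting an explicit block-structured quadratic and applying the paper's own co-coercivity Lemma~\ref{lemma:l_smooth_ineq} to the shifted functions. What your version buys is that the seminorm subtlety --- which the paper's citation of a Euclidean-norm lemma glosses over --- becomes an explicit, checkable convexity/smoothness claim about $\tilde\xi_i$; what the paper's version buys is brevity. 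Two small remarks: (a) your step (ii) (convexity of $\tilde f$) is redundant, since Lemma~\ref{lemma:l_smooth_ineq} only needs each $\tilde\xi_i$ convex and the average inherits convexity; (b) the displayed decomposition should read $\sum_{B\in T_i}\bigl(d_B\omega_B - \tfrac{\mu_\omega}{2}\|[\cdot]_B\|^2\bigr)$ rather than $\sum_{B\in T_i}d_B\bigl(\omega_B - \tfrac{\mu_\omega}{2}\|[\cdot]_B\|^2\bigr)$ to match $\boldsymbol{M}_i=\mu_\psi\boldsymbol{I}+\mu_\omega\PP_i$; as written it corresponds to $\mu_\omega\PP_i\DD$. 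This is only a typo --- each block piece equals $d_B(\omega_B-\tfrac{\mu_\omega}{2d_B}\|[\cdot]_B\|^2)$ and is convex by $d_B\ge 1$, and all of your subsequent computations use the $\PP_i$ version consistently.
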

\begin{proof}
We start by proving that $\xi_i$ is $\mu$-strongly convex when restricted to his support. Let $\boldsymbol{a}, \boldsymbol{b}$ be arbitrary vectors in $\RR^{p}$. Then we have the following sequence of inequalities:
\begin{align}
	\langle \nabla &\xi_i(\boldsymbol{a}) - \nabla \xi_i(\boldsymbol{b}), \boldsymbol{a} - \boldsymbol{b}\rangle \\
	&= \langle \nabla \psi_i(\boldsymbol{a}) + \PP_i \DD \nabla \omega(\boldsymbol{a}) - \nabla \psi_i(\boldsymbol{b}) - \PP_i \DD \nabla \omega(\boldsymbol{b}), \boldsymbol{a} - \boldsymbol{b} \rangle\\
	&\qquad \text{ (by definition of $\nabla \xi_i$)}\nonumber\\
	&= \langle \nabla \psi_i(\boldsymbol{a})  - \nabla \psi_i(\boldsymbol{b}), \boldsymbol{a} - \boldsymbol{b} \rangle + \langle \PP_i \DD \nabla \omega(\boldsymbol{a})  - \PP_i \DD \nabla \omega(\boldsymbol{b}), \boldsymbol{a} - \boldsymbol{b} \rangle\\
	&\geq \mu_\psi\|\boldsymbol{a} - \boldsymbol{b}\|^2 + \langle \PP_i \DD \nabla \omega(\boldsymbol{a})  - \PP_i \DD \nabla \omega(\boldsymbol{b}), \boldsymbol{a} - \boldsymbol{b} \rangle\\
	&\qquad \text{ (by strong convexity of $\nabla \psi_i$)}\nonumber\\
	&= \mu_\psi\|\boldsymbol{a} - \boldsymbol{b}\|^2 +  \textstyle\sum_{B \in \mathcal{B}}d_B \langle \nabla \omega_B([\boldsymbol{a}]_B)  - \nabla \omega_B([\boldsymbol{b}]_B), [\boldsymbol{a}]_B - [\boldsymbol{b}]_B\rangle\\
	&\qquad \text{ (by block separability of $\omega$ and definition of $\PP_i \DD$)}\nonumber\\
	&\geq \mu_\psi\|\boldsymbol{a} - \boldsymbol{b}\| + \textstyle\sum_{B \in \mathcal{B}} d_B \mu_\omega\|[\boldsymbol{a}]_B - [\boldsymbol{b}]_B\|^2\\
	&\qquad \text{ (strong convexity of $\omega_B$, consequence of strong cvx of $\omega$)}\nonumber\\
	&\geq \mu_\psi\|\boldsymbol{a} - \boldsymbol{b}\|^2 + \mu_\omega\|\boldsymbol{a} - \boldsymbol{b}\|_{(i)}^2\\
	&\qquad \text{ (using $d_B \geq 1$ by definition)}\nonumber\\
	&\geq \underbrace{\left(\mu_\psi + \mu_\omega\right)}_{=\mu}\|\boldsymbol{a} - \boldsymbol{b}\|_{(i)}^2~.
\end{align}

We have proved that $\xi_i$ is $\mu$-strongly convex  on the subspace generated by the extended support (i.e., with respect to the norm $\|\cdot\|_{(i)}$). Since it is also $L_f$-smooth by \eqref{eq:def_xi}, we can apply~\citep[Lemma 4]{defazio2014saga} to obtain the following inequality, valid for all $\boldsymbol{a}$ and $\boldsymbol{b}$ in its domain:
\begin{equation}
	\begin{aligned}
	\xi_i(\boldsymbol{a}) &\geq \xi_i(\boldsymbol{b}) + \langle\nabla \xi_i(\boldsymbol{b}),  \boldsymbol{a} - \boldsymbol{b} \rangle + \frac{1}{2(L_{f}- \mu)}\|\nabla \xi_i(\boldsymbol{a}) - \nabla \xi_i(\boldsymbol{b})\|^2\\
	&\qquad + \frac{\mu L_f}{2 (L_{f}- \mu)}\|\boldsymbol{a} - \boldsymbol{b}\|_{(i)}^2 + \frac{\mu}{L_{f}- \mu}\langle \nabla \xi_i(\boldsymbol{a}) - \nabla \xi_i(\boldsymbol{b}), \boldsymbol{b} - \boldsymbol{a}\rangle
	\end{aligned}
\end{equation}
We will apply the previous inequality at $\boldsymbol{a} = \xx$, $\boldsymbol{b} = \yy$ and average over all $i$. Note that $\frac{1}{n}\sum_{i=1}^n \xi_i(\xx) = \psi(\xx) + \omega(\xx) = f(\xx)$ by definition of $\xi_i$ and so we can write
\begin{align}
f(\xx) &\geq f(\yy) + \frac{1}{n}\sum_{i=1}^n\langle \nabla \xi_i(\yy),\xx -\yy \rangle + \frac{1}{2n(L_{f}- \mu)}\sum_{i=1}^n\|\nabla \xi_i(\xx) - \nabla \xi_i(\yy)\|^2\nonumber\\
&\qquad + \frac{\mu L_f}{2 (L_{f}- \mu)}\frac{1}{n}\sum_{i=1}^n\|\yy -\xx\|_{(i)}^2 + \frac{\mu}{L_{f}- \mu}\frac{1}{n}\sum_{i=1}^n\langle \nabla \xi_i(\xx) - \nabla \xi_i(\yy), \yy -\xx\rangle	\label{eq:strong_cvx_nu}
\end{align}
We can simplify the terms in this inequality as follows:
\begin{gather}
 \frac{1}{n}\sum_{i=1}^n\langle \nabla \xi_i(\yy), \xx - \yy \rangle = \langle \frac{1}{n}\sum_{i=1}^n \nabla \xi_i(\yy), \xx - \yy \rangle = \langle\nabla f(\yy), \xx - \yy \rangle\\
 \frac{1}{n}\sum_{i=1}^n\|\xx - \yy\|_{(i)}^2 = \|\xx - \yy\|^2_{\DD^{-1}} \geq \frac{1}{d_{\text{max}}}\|\xx - \yy\|^2\\
 \frac{1}{n}\sum_{i=1}^n\langle \nabla \xi_i(\xx) - \nabla \xi_i(\yy), \yy - \xx \rangle	= \langle \nabla f(\xx) - \nabla f(\yy), \yy - \xx\rangle
\end{gather}
The second equality results by the definition of $\DD$ which gives: $\Econd[\boldsymbol{P}_i] = \DD^{-1}$.Using the previous identities (and inequality) into \eqref{eq:strong_cvx_nu} we finally obtain the desired bound:
\begin{equation}
\begin{aligned}
f(\xx) &\geq f(\yy) + \langle \nabla f(\yy), \xx - \yy \rangle + \frac{1}{2n(L_{f}- \mu)}\sum_{i=1}^n\|\nabla \xi_i(\xx) - \nabla \xi_i(\yy)\|^2\\
&\qquad + \frac{\mu L_f}{2 d_{\text{max}}(L_{f}- \mu)}\|\xx - \yy\|^2 + \frac{\mu}{L_{f}- \mu}\langle \nabla f(\xx) - \nabla f(\yy), \yy - \xx \rangle
\end{aligned}
\end{equation}
\end{proof}

\begin{lemma}[Bound on gradient estimate variance] \label{lemma:variance_bound}
Let $\Econd$ denote the conditional expectation with respect to the random index $i$ selected at the $t$-th iteration. Then we have the following inequality:
\begin{equation}
\begin{aligned}
\Econd \|\vv_t - \PP_i\DD\nabla f(\xx^\star)\|^2 &\leq (1 + \beta^{-1}) 2  L_f {H}_t+ (1 + \beta)\Econd\| \nabla\xi_i(\zz_t) - \nabla\xi_i(\xx^\star)\|^2\\
&\qquad- \beta\|\nabla f(\zz_t) - \nabla f(\xx^\star)\|^2  ~,
\end{aligned}
\end{equation}
valid for any $\beta > 0$.
\end{lemma}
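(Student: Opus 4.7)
The plan is to split the gradient estimator into a ``signal'' term of the form $\nabla\xi_i(\zz_t) - \nabla\xi_i(\xx^\star)$ plus a conditionally mean-zero ``noise'' term, and then apply Young's inequality to the resulting cross-term in order to extract the characteristic $-\beta\|\nabla f(\zz_t) - \nabla f(\xx^\star)\|^2$ contribution. First I would use $\nabla\xi_i(\zz_t) = \nabla\psi_i(\zz_t) + \PP_i\DD\nabla\omega(\zz_t)$ together with the fact that $\nabla\psi_i$ and $\balpha_{i,t}$ are both supported in $T_i$ to rewrite $\vv_t = \nabla\xi_i(\zz_t) - \balpha_{i,t} + \PP_i\DD\overline\balpha_t$. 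Adding and subtracting $\nabla\xi_i(\xx^\star)$ and observing that the $\nabla\omega(\xx^\star)$ terms cancel, this yields $\vv_t - \PP_i\DD\nabla f(\xx^\star) = A_i + B_i$ with
\begin{equation*}
A_i \defas \nabla\xi_i(\zz_t) - \nabla\xi_i(\xx^\star), \qquad B_i \defas \PP_i\DD\bigl(\overline\balpha_t - \tfrac{1}{n}{\textstyle\sum_j}\nabla\psi_j(\xx^\star)\bigr) - \bigl(\balpha_{i,t} - \nabla\psi_i(\xx^\star)\bigr),
\end{equation*}
and a direct check gives $\Econd A_i = \nabla f(\zz_t) - \nabla f(\xx^\star)$ and $\Econd B_i = 0$.

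Next I would expand $\Econd\|A_i + B_i\|^2 = \Econd\|A_i\|^2 + 2\Econd\langle A_i, B_i\rangle + \Econd\|B_i\|^2$ and handle the cross-term using the mean-zero property of $B_i$: $\Econd\langle A_i, B_i\rangle = \Econd\langle A_i - \Econd A_i, B_i\rangle$, to which Young's inequality with parameter $\beta$ gives
\begin{equation*}
2\Econd\langle A_i, B_i\rangle \leq \beta\,\Econd\|A_i - \Econd A_i\|^2 + \beta^{-1}\Econd\|B_i\|^2.
\end{equation*}
Combined with the variance identity $\Econd\|A_i - \Econd A_i\|^2 = \Econd\|A_i\|^2 - \|\Econd A_i\|^2$, this produces $(1+\beta)\Econd\|A_i\|^2 - \beta\|\Econd A_i\|^2 + (1+\beta^{-1})\Econd\|B_i\|^2$, which already accounts for the $(1+\beta)$ and $-\beta$ terms in the claim.

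It remains to bound $\Econd\|B_i\|^2$ by $2L_f H_t$. Setting $W_i \defas \balpha_{i,t} - \nabla\psi_i(\xx^\star)$, one sees $B_i = -(W_i - \PP_i\DD\,\Econd W_i)$; since both summands of $W_i$ are supported in $T_i$, we have $\PP_i W_i = W_i$, so Lemma~\ref{lemma:generalized_variance_decomposition} with $\QQ_i = \PP_i$ and $\boldsymbol{A} = \DD$ yields
\begin{equation*}
\Econd\|B_i\|^2 = \Econd\|W_i\|^2 - \|\Econd W_i\|_{\DD}^2 \leq \tfrac{1}{n}{\textstyle\sum_{j=1}^n}\|\balpha_{j,t} - \nabla\psi_j(\xx^\star)\|^2.
\end{equation*}

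The main technical step is then the per-index inequality $\|\balpha_{i,t} - \nabla\psi_i(\xx^\star)\|^2 \leq 2L_f m_{i,t}$. This holds at $t=0$ by definition of $m_{i,0}$; and whenever $\balpha_{i,\cdot}$ was last refreshed at some past iterate $\zz_s$, so that $\balpha_{i,t} = \nabla\psi_i(\zz_s)$ and $m_{i,t} = B_{f_i}(\zz_s, \xx^\star)$, the $L_\psi$-smoothness of $\psi_i$ (Lemma~\ref{lemma:l_smooth_ineq}) combined with nonnegativity of Bregman divergences gives
\begin{equation*}
\|\balpha_{i,t} - \nabla\psi_i(\xx^\star)\|^2 \leq 2L_\psi\, B_{\psi_i}(\zz_s, \xx^\star) \leq 2L_f\, B_{f_i}(\zz_s, \xx^\star) = 2L_f\, m_{i,t},
\end{equation*}
using $L_\psi \leq L_f$ and $B_{\psi_i} \leq B_{\psi_i} + B_\omega = B_{f_i}$. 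Averaging over $i$ gives $\Econd\|B_i\|^2 \leq 2L_f H_t$, and substituting back yields the claim. The only subtle point -- and the main obstacle -- is this last step, which must reconcile the fact that the memory stores $\nabla\psi_i$ (without any $\omega$ contribution) with the Lyapunov $m_{i,t}$ defined via $B_{f_i}$ for $f_i = \psi_i + \omega$; both the comparison of smoothness constants and the positivity of $B_\omega$ are needed to bridge these two quantities with the same universal factor $2L_f$.
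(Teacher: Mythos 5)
Your proposal is correct and follows essentially the same route as the paper: the same signal/noise decomposition of $\vv_t - \PP_i\DD\nabla f(\xx^\star)$, the same use of the generalized variance decomposition (Lemma~\ref{lemma:generalized_variance_decomposition}) with $\QQ_i=\PP_i$, $\boldsymbol{A}=\DD$ for the memory part, the same Young's inequality with parameter $\beta$ (applied to the cross-term rather than to $\|a+b\|^2$, which is algebraically the same trade-off), and the identical final bound $\|\balpha_{i,t}-\nabla\psi_i(\xx^\star)\|^2\leq 2L_f m_{i,t}$ via $L_\psi\leq L_f$ and positivity of $B_\omega$.
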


\begin{proof}
Let $\psi \defas \frac{1}{n}\sum_{j=1}^n \psi_j$. Then we have the following sequence of inequalities:
\begin{align}
&\Econd \|\vv_t - \PP_i \DD\nabla f(\xx^\star)\|^2 =  \Econd \|\underbrace{\nabla\xi_i( \zz_t) - \balpha_{i, t} + \boldsymbol{D}_i\overline{\balpha}_t - \boldsymbol D_i  \nabla f(\xx^\star)}_{=\boldsymbol{\zeta}_i} \|^2\\
&=  \Econd\| \overbrace{\left[- \balpha_{i, t} + \boldsymbol{D}_i\overline{\balpha}_t + \nabla\xi_i(\xx^\star) - \boldsymbol D_i  \nabla f(\xx^\star)\right] + \big[ \nabla\xi_i( \zz_t) - \nabla\xi_i(\xx^\star)}^{\boldsymbol{\zeta}_i} - \overbrace{(\nabla f(\zz_t) - \nabla f(\xx^\star))}^{\Econd \boldsymbol{\zeta}_i}\big]\|^2\nonumber\\
&\qquad + \|\overbrace{\nabla f(\zz_t) - \nabla f(\xx^\star)}^{\Econd \boldsymbol{\zeta}_i}\|^2\\
&\qquad \text{ (by Lemma~\ref{lemma:generalized_variance_decomposition} with $\QQ_i = \boldsymbol{I}$, $\boldsymbol{A} = \boldsymbol{I}$, and where we have also added and substracted $\nabla\xi_i(\xx^\star)$)}\nonumber\\
&\leq (1 + \beta^{-1})\Econd \|\balpha_{i, t}- \psi_i(\xx^\star) - \boldsymbol{D}_i\overline{\balpha}_t  + \boldsymbol D_i  \nabla \psi(\xx^\star)\|^2 \nonumber \\
&\qquad + (1 + \beta)\Econd\| \nabla\xi_i( \zz_t) - \nabla\xi_i(\xx^\star) - \nabla f(\zz_t) + \nabla f(\xx^\star)\|^2+ \|\nabla f(\zz_t) - \nabla f(\xx^\star)\|^2\\
&\qquad \text{ (by Young's inequality $\|\boldsymbol{a} + \boldsymbol{b}\|^2\leq (1 + \beta^{-1})\|\boldsymbol{a}\|^2 + (1 + \beta)\|\boldsymbol{b}\|^2$ and definition of $\xi_i$)}\nonumber\\
&= (1 + \beta^{-1})\Econd\| \balpha_{i, t} - \psi_i(\xx^\star)\|^2 - (1 + \beta^{-1})\Econd\|\overline{\balpha}_t  - \nabla \psi(\xx^\star)\|_{\DD}^2\nonumber\\
&\qquad + (1 + \beta)\Econd\| \nabla\xi_i( \zz_t) - \nabla\xi_i(\xx^\star)\|^2 - \beta\|\nabla f(\zz_t) - \nabla f(\xx^\star)\|^2~,
\end{align}
where in the last equivalence we have applied Lemma~\ref{lemma:generalized_variance_decomposition} both to the first term (with $\QQ_i = \PP_i $, $\boldsymbol{A} = \DD$) and to the second term (this time with $\boldsymbol{A} = \boldsymbol{I}$, $\QQ_i = \boldsymbol{I}$).
In all, and dropping the negative second term we have the inequality
\begin{equation}
\begin{aligned}
	\Econd \|\vv_t - \PP_i \DD\nabla f(\xx^\star)\|^2 &\leq (1 + \beta^{-1})\Econd\| \balpha_{i, t} - \psi_i(\xx^\star)\|^2 + (1 + \beta)\Econd\| \xi_i(\zz_t) - \nabla\xi_i(\xx^\star)\|^2\\
	&\qquad - \beta\|\nabla f(\zz_t) - \nabla f(\xx^\star)\|^2
\end{aligned}\label{eq:variance_bound_proof_1}
\end{equation}

We will now bound the first term of the above inequality. Let $\mathcal{J}$ denote the set of indices for which the memory terms have been updated at least once and $\mathcal{J}^c$ its complement.
For $j \in \mathcal{J}$, we denote by $\bphi_{j , t}$ the iterate at which $\balpha_{j}$ was last updated, i.e., $\balpha_{j, t} = \nabla \psi(\bphi_{j , t})$ for all $j$ and $t$.
Then we have
\begin{align}
	\Econd\|\balpha_{i, t}- \nabla\psi_i(\xx^\star)\|^2  &= \frac{1}{n}\left(\sum_{j \in \mathcal{J}} \|\balpha_{i, t}- \nabla\psi_i(\xx^\star)\|^2 + 2 L_f \sum_{j \in \mathcal{J}^c} \xi_{t, j}\right)\\
	&\leq \frac{2L_f}{n}\left(\sum_{j \in \mathcal{J}} B_{\psi_i}(\bphi_{i, k}, \xx^\star) + \sum_{j \in \mathcal{J}^c} \xi_{t, j}\right)\\
	& \qquad \text{(by Lemma \ref{lemma:l_smooth_ineq} and also using $L_f \geq L_\psi$)}\nonumber \\
	&\leq \frac{2L_f}{n}\left(\sum_{j \in \mathcal{J}} B_{f_i}(\bphi_{i, k}, \xx^\star) + \sum_{j \in \mathcal{J}^c} \xi_{t, j}\right)\\
&\qquad \text{ (adding $B_\omega$, which is positive by convexity of $\omega$)}\nonumber\\
    &= 2 L_f H_t~.
\end{align}

Finally, plugging this bound back in \eqref{eq:variance_bound_proof_1} we obtain the desired inequality:
\begin{equation}
\begin{aligned}
\Econd \|\vv_t - \PP_i \DD\nabla f(\xx^\star)\|^2 &\leq (1 + \beta^{-1}) 2L_{f}H_t + (1 + \beta)\Econd\| \nabla\xi_i(\zz_t) - \nabla\xi_i(\xx^\star)\|^2\\
&\qquad - \beta\|\nabla f(\zz_t) - \nabla f(\xx^\star)\|^2
\end{aligned}
\end{equation}
\end{proof}

\begin{lemma}[Evolution of $H_t$]\label{lemma:evolution_hk}
Let $\Econd$ denote the conditional expectation with respect to the random index $i$ selected at the $t$-th iteration. Then for every iteration $t \geq 0$ we have (with $q=1$ for \SAGA\ variants):
\begin{equation}
	\Econd H_{t+1} = \frac{q}{n} B_f(\zz_t, \xx^\star) + \left(1 - \frac{q}{n}\right)H_t~.
\end{equation}

\begin{proof}
By definition of $m_{j, t+1}$ in Eq.~\eqref{eq:def_xi}, for a fixed index $j$ we have:
\begin{equation}
\Econd[m_{j, t+1}] = \frac{q}{n}B_{f_j}(\zz_t, \xx^*) + (1 - \frac{q}{n}) m_{j, t}~.
\end{equation}
Hence averaging over all indices we get
\begin{align}
\Econd[H_{t+1}] &= \frac{1}{n}\sum_{j=1}^n \Econd[m_{j, t+1}]& \nonumber \\
&= \frac{1}{n}\sum_{j=1}^n \left(\frac{q}{n} B_{f_j}(\zz_t, \xx^*) + (1 - \frac{q}{n})m_{j, t}\right)&\nonumber \\
&= \frac{q}{n} B_f(\zz_t,\xx^*) + (1 - \frac{q}{n})H_t&
\end{align}
\end{proof}

\end{lemma}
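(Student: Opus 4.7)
The plan is a direct computation using the definition of the $q$-memorization scheme together with linearity of the Bregman divergence in its first argument (with respect to the underlying function). First I would fix a single index $j \in \{1, \ldots, n\}$ and compute the conditional expectation $\Econd[m_{j, t+1}]$. By the $q$-memorization property (equation \eqref{eq:q_memorization}), the event $\{j \in J_t\}$ has probability $q/n$, and on this event $\balpha_{j, t+1} = \nabla f_j(\zz_t)$, so by the update rule for $m_{j,\cdot}$ we have $m_{j, t+1} = B_{f_j}(\zz_t, \xx^\star)$; on the complementary event, $m_{j, t+1} = m_{j, t}$. Since $m_{j, t}$ is deterministic conditional on the history up to iteration $t$, this immediately yields
\begin{equation*}
\Econd[m_{j, t+1}] = \frac{q}{n} B_{f_j}(\zz_t, \xx^\star) + \Bigl(1 - \frac{q}{n}\Bigr) m_{j, t}~.
\end{equation*}

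Next I would average this identity over $j$ using the definition $H_t = \frac{1}{n}\sum_j m_{j, t}$ and linearity of expectation. This gives
\begin{equation*}
\Econd[H_{t+1}] = \frac{q}{n}\cdot \frac{1}{n}\sum_{j=1}^n B_{f_j}(\zz_t, \xx^\star) + \Bigl(1 - \frac{q}{n}\Bigr) H_t~.
\end{equation*}
The final step is to observe that the Bregman divergence is linear in the function: since $f = \frac{1}{n}\sum_j f_j$ (by the definition $f_i = \psi_i + \omega$ used in this section, together with $f = \psi + \omega$), and each term $f_j(\zz_t) - f_j(\xx^\star) - \langle \nabla f_j(\xx^\star), \zz_t - \xx^\star\rangle$ averages to $f(\zz_t) - f(\xx^\star) - \langle \nabla f(\xx^\star), \zz_t - \xx^\star\rangle$, we have $\frac{1}{n}\sum_j B_{f_j}(\zz_t, \xx^\star) = B_f(\zz_t, \xx^\star)$. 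Substituting yields the claimed identity.

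There is essentially no hard step here; the only item worth being careful about is that the statement is phrased for generic $q$-memorization (covering both \SAGA-like updates with $q=1$ and \SVRG-like updates with general $q$), so the probability computation must be drawn from the abstract definition \eqref{eq:q_memorization} rather than any specific sampling rule. Also note that the conclusion is an equality, not an inequality, because the Bregman divergence $B_f$ sums over all coordinates (no sparsity reweighting is invoked here), and the probability $q/n$ is exactly the marginal probability that any given coordinate's memory is refreshed under the $q$-memorization definition.
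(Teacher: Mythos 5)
Your proposal is correct and follows exactly the same route as the paper's proof: compute $\Econd[m_{j,t+1}]$ for a fixed $j$ from the $q$-memorization update probability, average over $j$, and use that the Bregman divergences of the $f_j$ average to that of $f$. You are merely more explicit than the paper about the two small points it leaves implicit (the $q/n$ marginal update probability and the linearity of $B_{(\cdot)}(\zz_t,\xx^\star)$ in the function), which is fine.
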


\clearpage

\subsection{Linear convergence: proof of Theorem~\ref{thm:linear_convergence}}\label{apx:analysis_linear_convergence}

The {\bfseries proof is structured} as follows:
\begin{itemize}[leftmargin=*]
\item We start by proving an inequality that relates $\|\yy_{t+1} - \yy^\star\|^2$ with $\|\yy_t - \yy^\star\|^2$, where $\yy^\star$ is a fixed point of $\boldsymbol{G}_\gamma$. This inequality will be central in both proofs of linear and sublinear convergence. We call this the ``master recurrence inequality'' (Lemma~\ref{lemma:master_inequality}),
\item As is often the case in variance reduced methods, a recurrence purely in terms of the iterates as the one in Lemma~\ref{lemma:master_inequality} does not provide the monotonic decrease required to prove a linear convergence rate. To overcome this, we will make use of an auxiliary function which is always larger than the suboptimality criterion and which \emph{does} verify a monotonic decrease in expectation. This is often referred to as a \emph{Lyapunov} function.
The Lyapunov function that we will use is the following:
\begin{equation}\label{eq:def_lyapunov}
V_t \defas c \|\yy_t - \yy^\star\|^2 + {H}_t \quad,
\end{equation}
with $H_t$ as defined in \eqref{eq:def_H_t} and $\yy^\star$ an arbitrary fixed point of $\boldsymbol{G}_\gamma$.

\item Finally, in Theorem \ref{thm:linear_convergence} we use the decrease of the Lyapunov function prove the desired rates of convergence.
\end{itemize}

\hfill


\begin{lemma}[Master recurrence inequality]\label{lemma:master_inequality}
Let $\{\yy_t, \xx_t, \zz_t\}$ be the iterates produced by any of the proposed algorithms, $\yy^\star \in \Fix(\boldsymbol{G}_\gamma)$
and $\xx^\star \defas \prox^{\DD^{-1}}_{\gamma h}(\yy^\star)$ (with $\DD = \boldsymbol{I}$ for the dense variants).
Then we have the following inequality, valid for all $\beta > 0$ and $s>0$:
\begin{align}
    \Econd\|\yy_{t+1} - \yy^\star\|^2 &\leq \|\yy_t - \yy^\star\|^2 + (s - 1)\Econd \|\yy_{t+1} - \yy_t\|^2 \nonumber\\
&\qquad+ \frac{\gamma^2}{s}(1 + \beta^{-1}) 2L_f H_t \nonumber\\
&\qquad + (\frac{\gamma^2}{s}(1 + \beta) - \frac{\gamma}{L_f})\Econd\| \nabla\xi_i(\zz_t) - \nabla \xi_i(\xx^\star)\|^2 \nonumber\\
&\qquad   +(- 2\frac{\gamma^2 \beta}{s}\mu - 2\gamma \frac{L_f-\mu}{L_f}) B_f(\zz_t,\xx^\star)  \nonumber\\
&\qquad - \frac{\gamma \mu}{d_{\max}}\|\zz_t - \xx^\star\|^2
\end{align}
\end{lemma}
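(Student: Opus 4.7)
The proof begins by exploiting the fixed-point structure given in Corollary~\ref{cor:fixed_point}, which tells us that $\zz^\star := \prox^{\DD^{-1}}_{\gamma h}(\yy^\star) = \xx^\star$. Combined with the update rule $\yy_{t+1} = \yy_t + \xx_t - \zz_t$ (and its counterpart $\yy^\star = \yy^\star + \xx^\star - \zz^\star$), expanding the square gives
\begin{equation*}
\|\yy_{t+1} - \yy^\star\|^2 = \|\yy_t - \yy^\star\|^2 + \|\yy_{t+1} - \yy_t\|^2 + 2\langle \yy_t - \yy^\star, \xx_t - \zz_t\rangle,
\end{equation*}
so the remaining task is to bound the cross term from above, up to a slack $(s-1)\|\yy_{t+1} - \yy_t\|^2$ that will emerge after a Young-type split.

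To attack the cross term I would apply the subgradient characterization of the proximal operator (Lemma~\ref{lemma:prox_characterization}) to the updates defining $\zz_t$ and $\xx_t$, producing the inclusions $\uu_t := \DD^{-1}(\yy_t - \zz_t)/\gamma \in \partial h(\zz_t)$ and $\DD^{-1}(2\zz_t - \yy_t - \gamma \DD \vv_t - \xx_t)/\gamma \in \partial g(\xx_t)$, as well as their counterparts at the fixed point, $\uu^\star \in \partial h(\xx^\star)$ and $-\uu^\star - \nabla f(\xx^\star) \in \partial g(\xx^\star)$. Monotonicity of $\partial g$ and $\partial h$, applied to these pairs of inclusions and suitably chained, converts the cross term into a deterministic piece $-2\gamma \langle \nabla f(\zz_t) - \nabla f(\xx^\star), \zz_t - \xx^\star\rangle$ (to be treated by strong convexity), plus a stochastic error $-2\gamma \langle \zz_t - \xx^\star, \DD(\vv_t - \nabla f(\zz_t))\rangle$ and residual inner products with $\xx_t - \zz_t$ that need to be split off.

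Next I would apply Young's inequality with weight $s$ to separate the ``deterministic progress'' from the stochastic noise; this is exactly what introduces the $(s-1)\|\yy_{t+1} - \yy_t\|^2$ term (absorbing the naive $+\|\yy_{t+1} - \yy_t\|^2$ already in the expansion) and the $\gamma^2/s$ prefactor on the variance. Taking conditional expectation and invoking Lemma~\ref{lemma:variance_bound} with free parameter $\beta$ bounds the resulting variance by $(1+\beta^{-1})\,2L_f\,H_t + (1+\beta)\,\Econd \|\nabla\xi_i(\zz_t) - \nabla \xi_i(\xx^\star)\|^2 - \beta\,\|\nabla f(\zz_t) - \nabla f(\xx^\star)\|^2$, which supplies three of the terms in the claim; the negative $-\beta\|\nabla f(\zz_t) - \nabla f(\xx^\star)\|^2$ contribution is then absorbed via the standard strong-convexity bound $\|\nabla f(\zz_t) - \nabla f(\xx^\star)\|^2 \geq 2\mu\,B_f(\zz_t, \xx^\star)$, yielding the $-2\gamma^2\beta\mu/s \cdot B_f$ coefficient. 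Finally, for the deterministic inner product $-2\gamma\langle \nabla f(\zz_t) - \nabla f(\xx^\star), \zz_t - \xx^\star\rangle$, I would apply Lemma~\ref{lemma:strongly_convex_bound} with $\xx = \zz_t$, $\yy = \xx^\star$, which simultaneously generates the $-\frac{\gamma}{L_f}\Econd \|\nabla\xi_i(\zz_t) - \nabla\xi_i(\xx^\star)\|^2$, $-2\gamma \frac{L_f - \mu}{L_f} B_f(\zz_t, \xx^\star)$, and $-\frac{\gamma \mu}{d_{\max}}\|\zz_t - \xx^\star\|^2$ terms exactly as stated.

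\textbf{Main obstacle.} The delicate step is the simultaneous bookkeeping of the two auxiliary parameters $s$ and $\beta$, which enter the final bound in a coupled manner through the prefactors $\gamma^2(1+\beta^{\pm 1})/s$, together with the $L_f - \mu$ factor arising from Lemma~\ref{lemma:strongly_convex_bound}; a sign error or an off-by-one scaling anywhere propagates. A second subtlety, needed for the sparse variant, is that $\vv_t$, $\xx_t - \zz_t$, and the subgradient inclusion for $\xx_t$ are only active on the extended support $T_i$, so the analysis must leverage $\Econd \PP_i = \DD^{-1}$ and the fact that $\nabla \xi_i$ is supported on $T_i$ to reconcile the inner products with the full-space norms appearing in the statement and to unify the dense and sparse cases.
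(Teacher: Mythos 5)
Your plan follows essentially the same route as the paper's proof: expand the square, bound the cross term via the prox subgradient inclusions and monotonicity (which is exactly how the paper's firm-nonexpansiveness inequalities arise), split off the noise with Young's inequality in $s$, invoke Lemma~\ref{lemma:variance_bound} with parameter $\beta$, handle the deterministic inner product with Lemma~\ref{lemma:strongly_convex_bound}, and absorb $-\beta\|\nabla f(\zz_t)-\nabla f(\xx^\star)\|^2$ via $\geq 2\mu B_f(\zz_t,\xx^\star)$. The only quibble is that Lemma~\ref{lemma:strongly_convex_bound} should be applied with $\yy=\zz_t$, $\xx=\xx^\star$ (not the reverse) so that the Bregman term comes out as $B_f(\zz_t,\xx^\star)$ rather than $B_f(\xx^\star,\zz_t)$; with that transposition corrected, the argument matches the paper's.
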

\begin{proof}
Developing the square we have
\begin{align}
\Econd\|\yy_{t+1} - \yy^\star\|^2 &= \Econd\|\yy_t + (\yy_{t+1} - \yy_t) - \yy^\star\|^2\\
&= \|\yy_t - \yy^\star\|^2 + \Econd\|\yy_{t+1} - \yy_t\|^2 + 2\Econd \langle \yy_{t+1} - \yy_t, \yy_t - \yy^\star\rangle~.\label{eq:initial_decomp_master}
\end{align}
We will now work towards bounding the last term of this expression.

Let $i$ denote the random index selected at iteration $t$.
Note that by definition of $\DD$ we have $\Econd[\boldsymbol{P}_i] = \DD^{-1}$ and so we can write:
\begin{align}
  \Econd \langle \zz_t - \xx^\star, \boldsymbol{P}_i(\yy_t - {\yy^\star})\rangle
  &= \langle \zz_t - \xx^\star, \yy_t - {\yy^\star}\rangle_{\DD^{-1}}\\
  & \stackrel{}{\geq} \|\zz_t - \xx^\star\|_{\DD^{-1}}^2~,
\end{align}
where in the last inequality we have used Eq.~\eqref{eq:nonexpansive} with $\zz_t = \prox^{\DD^{-1}}_{\gamma h}(\yy_t)$ and $\xx^\star = \prox_{\gamma g}^{\DD}(\yy^\star)$. Using once again  the identity $\Econd[\boldsymbol{P}_i] = \DD^{-1}$ and noting that $\zz_t$ does not depend on $i$  we have $ \|\zz_t - \xx^\star\|_{\DD^{-1}}^2 = \Econd \|\zz_t - \xx^\star\|^2_{\PP_i}$ and so in all, we have
\begin{equation}\label{eq:strict_nonexpansiveness_h}
\Econd \langle \zz_t - \xx^\star, \PP_i(\yy_t - \yy^\star)\rangle \geq \Econd \|\zz_t - \xx^\star\|_{\PP_i}^2~.
\end{equation}
Furthermore, by the blockwise version of the firm non-expansiveness of the prox (Lemma~\ref{lemmma:block_nonexpansive}), from the definition of $\xx_t$ in \VRTOS{} we also have the following inequality, with $\xx_t = \prox^{\DD^{-1}}_{\gamma g}( 2\zz_t - \yy_t - \gamma \vv_t)$ and $\xx^\star = \prox^{\DD^{-1}}(2 \xx^\star - \yy^\star - \gamma \DD \nabla f({\xx^\star}))$, where this last equality is a consequence of Colollary~\ref{cor:fixed_point}:
\begin{equation}
     \langle 2 \zz_t - \yy_t - \gamma \vv_t - 2 \xx^\star + \yy^\star + \gamma \PP_i \DD \nabla f({\xx^\star}) , \PP_i(\xx_t - {\xx^\star})\rangle - \|\xx_t - {\xx^\star}\|_{\PP_i}^2  \geq 0~,
\end{equation}
which taking conditional expectation gives
\begin{equation}\label{eq:strict_nonexpansiveness_g}
     \Econd\langle 2 \zz_t - \yy_t - \gamma \vv_t - 2 \xx^\star + \yy^\star + \gamma \PP_i \DD \nabla f({\xx^\star}) , \PP_i(\xx_t - {\xx^\star})\rangle - \Econd\|\xx_t - {\xx^\star}\|_{\PP_i}^2  \geq 0~.
\end{equation}

We now have the following sequence of inequalities:
\begin{align}
&\Econd\langle \yy_{t+1} - \yy_t, \yy_t - \yy^\star\rangle \\
&\quad= \Econd\langle \xx_t - \zz_t, \PP_i(\yy_t - \yy^\star)\rangle\\
&\quad\qquad \text{ (definition of $\yy_{t+1}$)}\nonumber\\
&\quad=  \Econd \langle \xx_t - \zz_t - \xx^\star + \xx^\star, \PP_i(\yy_t - \yy^\star)\rangle\\
&\quad \qquad \text{ (adding and substracting $\xx^\star$)}\nonumber\\
&\quad=  \langle \xx_t - \xx^\star, \PP_i(\yy_t - \yy^\star) \rangle - \Econd\langle \zz_t - \xx^\star, \PP_i(\yy_t - \yy^\star)\rangle \\
&\quad\leq  \Econd \langle \xx_t - \xx^\star, \PP_i(\yy_t - \yy^\star)\rangle - \Econd \|\zz_t - \xx^\star\|_{\PP_i}^2 \qquad \text{ (by Eq~\eqref{eq:strict_nonexpansiveness_h})} \\
&\quad\leq \Econd\langle 2 \zz_t - \gamma \vv_t - 2 \xx^\star + \gamma \DD \nabla f(\xx^\star), \PP_i(\xx_t - \xx^\star) \rangle\!-\!\Econd\|\xx_t - \xx^\star\|_{\PP_i}^2\!-\!\Econd\|\zz_t - \xx^\star\|_{\PP_i}^2\\
&\quad \qquad \text{ (adding Eq.~\eqref{eq:strict_nonexpansiveness_g})}\nonumber\\
&\quad\leq \Econd\Big[\langle 2 \zz_t - \gamma \vv_t - 2 \xx^\star + \gamma \DD \nabla f(\xx^\star), \PP_i(\xx_t - \xx^\star) \rangle - \|\xx_t - \xx^\star\|_{\PP_i}^2\\
&\qquad\qquad\qquad -  \|\zz_t - \xx^\star\|_{\PP_i}^2\Big]\quad \qquad \text{ (by linearity of expectation)}\nonumber\\
&\quad\leq \Econd\Big[- \left(\|\xx_t - \xx^\star\|_{\PP_i}^2 - 2\langle \zz_t - \xx^\star, \PP_i(\xx_t - \xx^\star) \rangle  +  \|\zz_t - \xx^\star\|_{\PP_i}^2\right)\\
 &\qquad\qquad\qquad + \langle - \gamma \vv_t + \gamma \DD \nabla f(\xx^\star), \PP_i(\xx_t - \xx^\star) \rangle\Big]\quad\qquad \text{ (reordering terms)}\nonumber\\
&\quad= \Econd\left[- \|\zz_t - \xx^\star - (\xx_t - \xx^\star)\|_{\PP_i}^2 - \gamma  \langle  \vv_t -  \PP_i\DD \nabla f(\xx^\star), \xx_t - \xx^\star \rangle\right]\\
&\quad \qquad \text{ (completing the square)}\nonumber\\
&\quad= \Econd\Big[- \|\zz_t - \xx_t\|_{\PP_i}^2 -  \langle  \gamma \vv_t - \gamma {\PP_i}\DD \nabla f(\xx^\star), \xx_t - \zz_t \rangle\nonumber\\
&\qquad\qquad- \langle \gamma \vv_t - \gamma \PP_i \DD \nabla f(\xx^\star), \zz_t - \xx^\star \rangle \Big] \qquad\text{ (adding and substracting $\zz_t$)}\nonumber\\
&\quad\leq \Big(\frac{s}{2} - 1\Big)\Econd \|\yy_{t+1} - \yy_t\|^2 + \frac{\gamma^2}{2s}\Econd\| \vv_t -  \PP_i\DD \nabla f(\xx^\star) \|^2\nonumber\\
&\qquad\qquad- \gamma \langle   \nabla f(\zz_t) -  \nabla f(\xx^\star), \zz_t - \xx^\star \rangle~,\label{eq:last_ineq_y_y_y_y}
    \end{align}
where in the last inequality we have used Young's inequality: $|\langle \aa, \boldsymbol{b}] \rangle | \leq \frac{s}{2}\|\aa\|^2 + \frac{1}{2 s}\|\boldsymbol{b}\|^2$ and definition of $\yy_{t+1}$ for the first term and computed the expectation in the last term.

Replacing this last inequality into \eqref{eq:initial_decomp_master} we obtain
\begin{equation}
\begin{aligned}
    \Econd\|\yy_{t+1} - \yy^\star\|^2 &\leq \|\yy_t - \yy^\star\|^2 + (s-1) \Econd \|\yy_{t+1} - \yy_t\|^2\\
    & +\frac{\gamma^2}{s}\Econd\| \vv_t -  {\PP_i}\DD \nabla f(\xx^\star) \|^2 - 2\gamma \langle   \nabla f(\zz_t) -  \nabla f(\xx^\star), \zz_t - \xx^\star \rangle
    \end{aligned}
\end{equation}

We will proceed to further bound the second and last terms using previous results.
For the second term, we can use the bound $\Econd \|\vv_t - \PP_i \PP_i \DD \nabla f(\xx^\star)\|^2  \leq (1 + \beta^{-1}) 2L_f H_t + {(1 + \beta)\Econd\| \nabla\xi_i(\zz_t) - \nabla \xi_i(\xx^\star)\|^2}  - \beta\|\nabla f(\zz_t) - \nabla f(\xx^\star)\|^2$ from Lemma~\ref{lemma:variance_bound}, giving:
\begin{equation}
\begin{aligned}
\frac{\gamma^2}{2s}\Econd\| \vv_t -  \DD \nabla f(\xx^\star) \|_{(i)}^2 &\leq \frac{\gamma^2}{s}(1 + \beta^{-1}) L_f H_t + \frac{\gamma^2}{2s}(1 + \beta)\Econd\| \nabla\xi_i(\zz_t) - \nabla \xi_i(\xx^\star)\|^2\\
&\qquad   - \frac{\gamma^2 \beta}{2s}\|\nabla f(\zz_t) - \nabla f(\xx^\star)\|^2	~,
\end{aligned}
\end{equation}
The third term can be bounded using the strong convexity inequality  of Lemma~\ref{lemma:strongly_convex_bound} with $\yy = \zz_t, \xx = \xx^\star$, to obtain
\begin{align}
  -&\gamma \langle \nabla f(\zz_t) - \nabla f(\xx^\star), \zz_t - \xx^\star\rangle =  \gamma\langle\nabla f(\xx^\star), \zz_t - \xx^\star\rangle + \gamma \langle \nabla f(\zz_t), \xx^\star - \zz_t\rangle\\
  &\leq \gamma \langle\nabla f(\xx^\star), \zz_t - \xx^\star\rangle + \gamma (\frac{L_{f}-\mu}{L_{f}}(f(\xx^\star) - f(\zz_t))\nonumber \\
  &\qquad - \frac{1}{2L_{f}}\Econd\|\nabla\xi_i( \zz_t) - \nabla\xi_i(\xx^\star)\|^2 - \frac{\mu}{2d_{\max}}\|\zz_t - \xx^\star\|^2 - \frac{\mu}{L_{f}}\langle \nabla f(\xx^\star), \zz_t - \xx^\star\rangle)\\
  &\leq - \gamma \frac{L_{f}-\mu}{L_{f}}B_f(\zz_t, \xx^\star) + \gamma\left(- \frac{1}{2L_{f}}\Econd\|\nabla\xi_i( \zz_t) - \nabla\xi_i(\xx^\star)\|^2 - \frac{\mu}{2d_{\max}}\|\zz_t - \xx^\star\|^2\right)
\end{align}
Using the bound for these two terms in \eqref{eq:last_ineq_y_y_y_y} we have
\begin{align}
    \Econd\|\yy_{t+1} - \yy^\star\|^2 &\leq  \|\yy_t - \yy^\star\|^2 + (s - 1)\Econd \|\yy_{t+1} - \yy_t\|^2  \nonumber\\
&\qquad \text{ ($\yy_t$ and $\yy^\star$ do not depend on $i$)}\nonumber \\
&\qquad+ \frac{\gamma^2}{s}(1 + \beta^{-1}) 2L_f H_t  \nonumber\\
&\qquad + (\frac{\gamma^2}{s}(1 + \beta) - \frac{\gamma}{L})\Econd\| \nabla\xi_i(\zz_t) - \nabla \xi_i(\xx^\star)\|^2  \nonumber\\
&\qquad   - \frac{\gamma^2 \beta}{s}\|\nabla f(\zz_t) - \nabla f(\xx^\star)\|^2 \nonumber\\
&\qquad - 2\gamma \frac{L-\mu}{L}B_f(\zz_t, \xx^\star) - \frac{\gamma \mu}{d_{\max}}\|\zz_t - \xx^\star\|^2
\end{align}
We now use the bound $-\|\nabla f(\zz_t) - \nabla f(\xx^\star)\|^2 \leq -2\mu B_f(\zz_t,\xx^\star)$ \citep[Theorem 2.1.10]{nesterov2004introductory} to obtain:
\begin{align}
    \Econd\|\yy_{t+1} - \yy^\star\|^2 &\leq \|\yy_t - \yy^\star\|^2 + (s - 1)\Econd \|\yy_{t+1} - \yy_t\|^2  \nonumber\\
&\qquad+ \frac{\gamma^2}{s}(1 + \beta^{-1}) 2L_fH_t  \nonumber\\
&\qquad + (\frac{\gamma^2}{s}(1 + \beta) - \frac{\gamma}{L})\Econd\| \nabla\xi_i(\zz_t) - \nabla \xi_i(\xx^\star)\|^2  \nonumber\\
&\qquad   +(- 2\frac{\gamma^2 \beta}{s}\mu - 2\gamma \frac{L-\mu}{L}) B_f(\zz_t,\xx^\star)  \nonumber\\
&\qquad - \frac{\gamma \mu}{d_{\max}}\|\zz_t - \xx^\star\|^2
\end{align}
\end{proof}

\begin{lemma}[Lyapunov inequality]\label{lemma:lyapunov}
Let $\{\yy_t, \xx_t, \zz_t\}$ be the iterates produced by any of the proposed algorithms for $t \geq 0$, $ \yy^\star \in \Fix(\boldsymbol{G}_\gamma)$
and $\xx^\star \defas \prox^{\DD^{-1}}_{\gamma h}(\yy^\star)$. Let the Lyapunov function $V_t$ be as defined in \eqref{eq:def_lyapunov}.
  Then we have the following inequality:
\begin{equation}
    \begin{aligned}
        \Econd V_{t+1} &\leq V_t + c(s - 1)\Econd \|\yy_{t+1} - \yy_t\|^2 +  \left(\frac{2L_f c\gamma^2}{s}(1 + \beta^{-1}) - \frac{q}{n}\right) H_t \\
        &\qquad + c\left(\frac{\gamma^2}{s}(1 + \beta) - \frac{\gamma}{L_f}\right)\Econd\| \nabla\xi_i(\zz_t) - \nabla \xi_i(\xx^\star)\|^2\\
&\qquad + \left(- 2\frac{c\gamma^2 \beta}{s}\mu - 2c\gamma \frac{L_f-\mu}{L_f} + \frac{q}{n}\right) B_f(\zz_t,\xx^\star) - c\frac{\gamma \mu}{d_{\max}}\|\zz_t - \xx^\star\|^2\\\\
    \end{aligned}
\end{equation}
\end{lemma}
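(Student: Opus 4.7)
The plan is to derive this Lyapunov inequality by a direct linear combination of the two lemmas already established, namely the master recurrence inequality (Lemma~\ref{lemma:master_inequality}) and the evolution equation for $H_t$ (Lemma~\ref{lemma:evolution_hk}). No new estimate on the iterates is needed; the argument is essentially bookkeeping.

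First, by linearity of the conditional expectation and the definition $V_{t+1} \defas c \|\yy_{t+1} - \yy^\star\|^2 + H_{t+1}$, I would write
\begin{equation*}
\Econd V_{t+1} = c\,\Econd \|\yy_{t+1} - \yy^\star\|^2 + \Econd H_{t+1}.
\end{equation*}
For the first summand I multiply both sides of Lemma~\ref{lemma:master_inequality} by $c>0$ (which preserves the inequality since $c$ is a positive constant). For the second summand I substitute the exact identity $\Econd H_{t+1} = \tfrac{q}{n} B_f(\zz_t, \xx^\star) + (1 - \tfrac{q}{n}) H_t$ supplied by Lemma~\ref{lemma:evolution_hk}.

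Adding these two bounds and collecting terms according to the quantity they multiply produces exactly the stated inequality. The $c\|\yy_t - \yy^\star\|^2$ term from the master inequality combines with the $H_t$ piece from Lemma~\ref{lemma:evolution_hk} to reconstruct $V_t$ on the right-hand side, since $c\|\yy_t - \yy^\star\|^2 + H_t = V_t$. The coefficient of $H_t$ becomes $\tfrac{2 L_f c\gamma^2}{s}(1+\beta^{-1}) + (1-\tfrac{q}{n}) - 1 = \tfrac{2 L_f c\gamma^2}{s}(1+\beta^{-1}) - \tfrac{q}{n}$, matching the claim. The coefficient of $B_f(\zz_t, \xx^\star)$ gains an extra $+\tfrac{q}{n}$ from Lemma~\ref{lemma:evolution_hk}, producing $-\tfrac{2 c \gamma^2 \beta}{s} \mu - 2 c \gamma \tfrac{L_f - \mu}{L_f} + \tfrac{q}{n}$. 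The remaining terms $c(s-1)\Econd\|\yy_{t+1} - \yy_t\|^2$, $c(\tfrac{\gamma^2}{s}(1+\beta) - \tfrac{\gamma}{L_f})\Econd\|\nabla \xi_i(\zz_t) - \nabla \xi_i(\xx^\star)\|^2$, and $-c\tfrac{\gamma \mu}{d_{\max}} \|\zz_t - \xx^\star\|^2$ are carried over from the master inequality unchanged (up to the multiplicative factor $c$).

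Since both ingredients are already proved, there is no substantive obstacle here; the only point requiring care is to cleanly isolate $V_t$ on the right-hand side and track the sign of the $-q/n$ correction that arises when $H_t$ is absorbed into $V_t$ while $\tfrac{q}{n} H_t$ is subtracted by Lemma~\ref{lemma:evolution_hk}. The resulting inequality is the starting point for the linear and sublinear rate arguments, where $c$, $s$, and $\beta$ will later be chosen so that the coefficients of $\Econd\|\yy_{t+1} - \yy_t\|^2$, $\Econd\|\nabla\xi_i(\zz_t) - \nabla\xi_i(\xx^\star)\|^2$, and $B_f(\zz_t,\xx^\star)$ become non-positive, leaving only a contraction on $V_t$.
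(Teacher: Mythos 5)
Your proposal is correct and is exactly the argument the paper gives: multiply the master recurrence inequality (Lemma~\ref{lemma:master_inequality}) by $c$, add the exact identity for $\Econd H_{t+1}$ from Lemma~\ref{lemma:evolution_hk}, and regroup so that $c\|\yy_t - \yy^\star\|^2 + H_t$ reassembles into $V_t$, with the $-q/n$ correction landing on the $H_t$ coefficient and the $+q/n$ on the $B_f$ coefficient. Nothing is missing.
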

\begin{proof}
  We will first compute the conditional expectation of the Lyapunov term,
  $
\Econd V_{t+1}
  $.
For the first term we can use the bound in Lemma~\ref{lemma:master_inequality} to obtain
\begin{align}
c\Econd\|\yy_{t+1} - \yy^\star\|^2 &\leq c\|\yy_t - \yy^\star\|^2 + c(s - 1)\Econd \|\yy_{t+1} - \yy_t\|^2  \nonumber\\
&\qquad+ \frac{\gamma^2}{s}(1 + \beta^{-1}) 2c L_f H_t  \nonumber\\
&\qquad + c(\frac{\gamma^2}{s}(1 + \beta) - \frac{\gamma}{L_f})\Econd\| \nabla\xi_i(\zz_t) - \nabla \xi_i(\xx^\star)\|^2  \nonumber\\
&\qquad   +c(- 2\frac{\gamma^2 \beta}{s}\mu - 2\gamma \frac{L_f-\mu}{L_f}) B_f(\zz_t,\xx^\star)  \nonumber\\
&\qquad - c\frac{\gamma \mu}{d_{\max}}\|\zz_t - \xx^\star\|^2
\end{align}
  Using Lemma~\ref{lemma:evolution_hk}, the second term of the Lyapunov function gives :
  \begin{align}
  \Econd\, {H}_{t+1} = (1 - \frac{q}{n})H_t + \frac{q}{n}B_f(\zz_t, \xx^\star) ~.
    \end{align}
  and so adding both inequalities we have
\begin{align}
    \Econd V_{t+1} &\leq  \overbrace{c\|\yy_t - \yy^\star\|^2 + H_t}^{V_t} + c(s - 1)\Econd \|\yy_{t+1} - \yy_t\|^2  \nonumber\\
&\qquad + (\frac{2L_{f}c\gamma^2}{s}(1 + \beta^{-1}) - \frac{q}{n}) H_t  \nonumber\\
&\qquad + c(\frac{\gamma^2}{s}(1 + \beta) - \frac{\gamma}{L_f})\Econd\| \nabla\xi_i(\zz_t) - \nabla \xi_i(\xx^\star)\|^2  \nonumber\\
&\qquad + (- 2c\frac{\gamma^2 \beta}{s}\mu - 2c\gamma \frac{L_f - \mu}{L_f} + \frac{q}{n}) B_f(\zz_t,\xx^\star)  \nonumber\\
&\qquad - c\frac{\gamma \mu}{d_{\max}}\|\zz_t - \xx^\star\|^2
\end{align}
  which completes the proof.
\end{proof}

\begin{mdframed}
\begin{customtheorem}{\ref{thm:linear_convergence}}
\TheoremLinear
\end{customtheorem}
\end{mdframed}

\begin{proof}
  %
  From the Lyapunov inequality of Lemma~\ref{lemma:lyapunov} with $s = 1$ we have the following sequence of inequalities
  \begin{align}
      &\Econd V_{t+1} - (1 - \rho)V_t \leq \nonumber\\
      &\qquad\rho V_t + \left({2Lc\gamma^2}(1 + \beta^{-1}) - \frac{q}{n}\right) H_t + c\left({\gamma^2}(1 + \beta) - \frac{\gamma}{L_f}\right)\Econd\| \nabla\xi_i(\zz_t) - \nabla \xi_i(\xx^\star)\|^2\nonumber\\
&\quad  + \left(- 2{c\gamma^2 \beta}\mu - 2c\gamma \frac{L_f-\mu}{L_f} + \frac{q}{n}\right) B_f(\zz_t,\xx^\star) - c\frac{\gamma \mu}{d_{\max}}\|\zz_t - \xx^\star\|^2  \\
&\leq \rho \left(c\|\yy_t - \yy^\star\|^2 + H_t\right)  + \left(\frac{2L_f c\gamma^2}{s}(1 + \beta^{-1}) - \frac{q}{n}\right) H_t\nonumber\\
&\qquad  + c(\frac{\gamma^2}{s}(1 + \beta) - \frac{\gamma}{L_f})\Econd\| \nabla\xi_i(\zz_t) - \nabla \xi_i(\xx^\star)\|^2 + (- 2\frac{c\gamma^2 \beta}{s}\mu - 2c\gamma \frac{L_f-\mu}{L_f} + \frac{q}{n}) B_f(\zz_t,\xx^\star) \nonumber \\
&\qquad \qquad - c\frac{\gamma \mu }{d_{\max}^{3}(1 + \gamma L_h)^2} \|\yy_{t} - \yy^\star\|^2\label{eq:using_smoothness}\\
&\qquad \qquad \qquad \text{(using Lemma \ref{lemma:properties_prox} on the last term, where we have bounded $d_{\min} \geq 1$)}\nonumber\\
&\leq c\left[\rho - \frac{\gamma \mu }{d_{\max}^{3}(1+ \gamma L_h)^{2}}\right] \|\yy_t - \yy^\star\|^2 + \left[\rho + 2L_{f}c\gamma^2(1 + \beta^{-1}) - \frac{q}{n}\right] H_t  \nonumber\\
&\qquad  + c\gamma \left[\gamma(1 + \beta) - \frac{1}{L_f}\right]\Econd\| \nabla\xi_i(\zz_t) - \nabla \xi_i(\xx^\star)\|^2 \nonumber\\
&\qquad \qquad + \left[- 2c\gamma^2 \beta\mu - 2c\gamma \frac{L_f-\mu}{L_f} + \frac{q}{n}\right] B_f(\zz_t,\xx^\star)
  \end{align}
It is worth noting that Eq.~\eqref{eq:using_smoothness} is the only part of the proof in which we use the smoothness of $h$.

Taking the coefficients
\begin{equation}
c = \frac{q}{2\gamma(1 - \gamma\mu)n}~,\quad \beta = 2~, \quad\rho = \min\left \{ \frac{q}{4n}, \frac{1}{3 d_{\max}\delta^2 \kappa} \right \}~,
\end{equation}
With $\delta = d_{\max}(1 + \frac{L_h}{3L_f})$ and $\kappa = {L_f}/{\mu}$. One can verify that all square brackets are non-positive for $\gamma \leq 1/ (3L_f)$ (the coefficients are the same, except for the first square bracket, than those that appear in \citep[Theorem 1]{defazio2014saga}. We hence have
\begin{equation}
    \Econd V_{t+1} \leq (1 - \rho)V_t~,
\end{equation}
which chaining expectations gives
\begin{equation}\label{eq:linear_conv_y}
    \EE \|\yy_{t+1} - \yy^\star\|^2 \leq \EE V_{t+1} \leq (1 - \rho)^{t+1} V_0~.
\end{equation}
This gives a geometric convergence on $\yy_t$. However, we would like to have a convergence rate in terms of the primal iterate $\xx_t$.

By Theorem \ref{thm:fixed_point} we have that $\xx^\star = \prox^{\DD^{-1}}_{\gamma h}(\yy^\star)$, and in this case the minimizer is unique because of strong convexity. Then by firm nonexpansiveness of the prox (Lemma~\ref{lemma:properties_prox}) we have
  \begin{equation}
       \|\zz_{t+1} - \xx^\star\|_{\DD}^2 \leq \|\yy_{t+1} - {\yy^\star}\|^2_{\DD}
  \end{equation}
  which combined with Lemma~\ref{lemma:bound_matrix_norm} and bounding $d_{\min}$ by $1$ (by definition all diagonal entries in $\DD$ are $\geq 1$) gives
\begin{align}
    \|\zz_{t+1} - \xx^\star\|^2 &\leq \frac{1}{d_{\min}}\|\zz_{t+1} - \xx^\star\|_{\DD}^2 \leq \frac{1}{d_{\min}}
    \|\yy_{t+1} - {\yy^\star}\|^2_{\DD} \\
    &\leq \left(\frac{d_{\max}}{d_{\min}}\right)
    \|\yy_{t+1} - {\yy^\star}\|^2\leq {d_{\max}}
    \|\yy_{t+1} - {\yy^\star}\|^2
\end{align}
Combining this with \eqref{eq:linear_conv_y} gives the following bound in $\zz_{t+1}$:
\begin{equation}
    \EE\|\zz_{t+1} - \xx^\star\|^2\leq {d_{\max}} \Econd
    \|\yy_{t+1} - {\yy^\star}\|^2 \leq (1 - \rho)^{t+1} {d_{\max}} V_0~,
\end{equation}
and the claimed bound follows from definition of $V_0$.

\end{proof}

\clearpage
\subsection{Proof of sublinear convergence rate -- dense algorithms}\label{apx:sublinear_dense}

In this section we give a proof of convergence for the dense variants of the proposed algorithms (Algorithm~\ref{alg:vrtos}).
Because we will not be considering the sparse variants, we assume $\DD = \boldsymbol{I}$ without explicit mention.

\begin{lemma}[Bound on gradient estimate variance, Variant 2]\label{lemma:bound_variance_2}
\begin{equation}
\Econd\|\vv_t - \nabla f(\zz_t)\|^2 \leq (1 +  \eta) \Econd\|\nabla f_i(\zz_t) - \nabla f_i(\xx^\star)\|^2 + 2 (1+ \eta^{-1}) L_{f}H_t
\end{equation}

\begin{proof}
\begin{align}
&\Econd\|\vv_t - \nabla f(\zz_t)\|^2 &\\
& = \Econd\|\nabla \psi_i(\zz_t) - \balpha_{i, t}+ (\overline{\balpha}_t + \nabla \omega(\zz_t)) - \nabla f(\zz_t)\|^2 &\\
& = \Econd\|\underbrace{\nabla f_i(\zz_t) - \balpha_{i, t}}_{\xi} +  \underbrace{\overline{\balpha}_t   - \nabla f(\zz_t)}_{- \Econd\xi}\|^2 &\\
&\qquad \text{ (By definition of $f_i$) }\nonumber&\\
&\leq \Econd\|\nabla f_i(\bphi_i^t) - \nabla f_i(\zz_t)\|^2&\\
&\qquad \text{ (Applying Lemma \ref{lemma:generalized_variance_decomposition} and by definition of $f_i$)}&\nonumber\\
&\leq \Econd\|\nabla f_i(\bphi_i^t) - \nabla f_i(\xx^\star) + \nabla f_i(\xx^\star) - \nabla f_i(\zz_t)\|^2&\\
&\qquad \text{ (Adding and substracting $\nabla f_i(\xx^\star)$)}&\nonumber\\
&\leq (1+ \eta^{-1})\Econd\|\nabla f_i(\bphi_i^t) - \nabla f_i(\xx^\star)\|^2 + (1+ \eta) \Econd\|\nabla f_i(\zz_t) - \nabla f_i(\xx^\star)\|^2&\\
&\qquad \text{ (Applying Young's inequality)}\nonumber\\
&\leq 2L_f (1+ \eta^{-1})H_t + (1+ \eta) \Econd\|\nabla f_i(\zz_t) - \nabla f_i(\xx^\star)\|^2&\\
&\qquad \text{ (Applying Lemma 6 from \citep{defazio2014saga} on the first term)}&\nonumber
\end{align}
\end{proof}
\end{lemma}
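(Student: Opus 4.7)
The plan is to express $\vv_t - \nabla f(\zz_t)$ as the deviation of a random variable from its conditional mean, upper-bound its second moment by the uncentered second moment, and then reduce that to a smoothness-based bound on gradient differences plus a term that can be absorbed into $H_t$.

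First, with $f_i \defas \psi_i + \omega$ (the convention already set up in the appendix), the gradient estimate in Algorithm~\ref{alg:vrtos} reads $\vv_t = \nabla f_i(\zz_t) - \balpha_{i,t} + \overline{\balpha}_t$. Setting $\boldsymbol{\zeta}_i \defas \nabla f_i(\zz_t) - \balpha_{i,t}$, a direct computation gives $\Econd \boldsymbol{\zeta}_i = \nabla f(\zz_t) - \overline{\balpha}_t$, whence $\vv_t - \nabla f(\zz_t) = \boldsymbol{\zeta}_i - \Econd \boldsymbol{\zeta}_i$. I would then invoke Lemma~\ref{lemma:generalized_variance_decomposition} with $\QQ_i = \boldsymbol{I}$ (equivalently, the elementary fact that variance about the mean is dominated by the second moment about any point) to obtain $\Econd\|\vv_t - \nabla f(\zz_t)\|^2 \leq \Econd\|\nabla f_i(\zz_t) - \balpha_{i,t}\|^2$.

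Next, I would introduce the optimum as a pivot by writing $\nabla f_i(\zz_t) - \balpha_{i,t} = \bigl(\nabla f_i(\zz_t) - \nabla f_i(\xx^\star)\bigr) + \bigl(\nabla f_i(\xx^\star) - \balpha_{i,t}\bigr)$ and apply Young's inequality $\|a+b\|^2 \leq (1+\eta)\|a\|^2 + (1+\eta^{-1})\|b\|^2$ for the parameter $\eta > 0$ that appears in the statement. The first resulting summand is already of the desired form $(1+\eta)\Econd\|\nabla f_i(\zz_t) - \nabla f_i(\xx^\star)\|^2$; the only work left is to verify that the second summand satisfies $\Econd\|\balpha_{i,t} - \nabla f_i(\xx^\star)\|^2 \leq 2 L_f H_t$.

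For that final step, whenever $\balpha_{j,t}$ has been refreshed at least once it equals $\nabla f_j(\bphi_j^t)$ for some past iterate $\bphi_j^t$, and $L_f$-smoothness of $f_j$ yields $\|\nabla f_j(\bphi_j^t) - \nabla f_j(\xx^\star)\|^2 \leq 2 L_f B_{f_j}(\bphi_j^t, \xx^\star) = 2 L_f m_{j,t}$ in the appendix notation (this is the co-coercivity bound already used inside Lemma~\ref{lemma:l_smooth_ineq}). Indices $j$ that have never been refreshed are covered by the initialization $m_{j,0} = \frac{1}{2 L_f}\|\balpha_{j,0} - \nabla \psi_j(\xx^\star)\|^2$, together with positivity of the Bregman divergence in $\omega$. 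Averaging over $j$ produces $\Econd\|\balpha_{i,t} - \nabla f_i(\xx^\star)\|^2 \leq 2 L_f H_t$ and the claim follows. The main difficulty I anticipate is therefore purely a bookkeeping one, namely handling uniformly both the ``already refreshed'' and ``never refreshed'' entries of the memory table, which is precisely what the hybrid Lyapunov coefficients $m_{j,t}$ introduced in the appendix preliminaries are designed for.
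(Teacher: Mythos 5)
Your proposal is correct and follows essentially the same route as the paper's proof: the centering $\vv_t - \nabla f(\zz_t) = \boldsymbol{\zeta}_i - \Econd\boldsymbol{\zeta}_i$ followed by Lemma~\ref{lemma:generalized_variance_decomposition} with $\QQ_i = \boldsymbol{I}$, the pivot at $\nabla f_i(\xx^\star)$ with Young's inequality in $\eta$, and the co-coercivity bound $\|\nabla f_j(\bphi_j^t) - \nabla f_j(\xx^\star)\|^2 \leq 2L_f B_{f_j}(\bphi_j^t, \xx^\star)$ to absorb the memory term into $2L_f H_t$. Your explicit bookkeeping of refreshed versus never-refreshed indices via the hybrid $m_{j,t}$ coefficients is exactly what the paper does in its companion variance lemma and what it compresses here into the citation of Lemma~6 of \citet{defazio2014saga}.
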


\begin{lemma}[Saddle point recursive inequality]\label{theorem:saddle_point_inequality_1}
Let $\gamma \leq 1 / L$ and $\yy_t, \xx_t, \uu_t$ be the iterates generated by either \VRTOS\ (Algorithm~\ref{alg:vrtos}). Then we have the following inequality for any $(\xx, \uu) \in \dom(g)\times \dom(h)$, with $\yy = \xx + \gamma \uu$:
\begin{equation}
\begin{aligned}
&2 \gamma \Econd(\mathcal{L}(\xx_t, \uu) - \mathcal{L}(\xx, \uu_t)) + \Econd\|\yy_{t + 1} - \yy\|^2 \\
&\qquad \leq \Econd\|\yy_{t} - \yy\|^2+ 2 \gamma^2(1 +  \eta) \Econd\|\nabla\xi_i (\zz_t) - \nabla\xi_i(\xx^\star)\|^2 + 4 \gamma^2 (1+ \eta^{-1})L_{f}H_t
\end{aligned}
\end{equation}
\end{lemma}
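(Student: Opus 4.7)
The plan is to first obtain an upper bound on the Lagrangian gap $\mathcal{L}(\xx_t,\uu)-\mathcal{L}(\xx,\uu_t)$ using the subgradient characterizations of the two proximal operators and the $L_f$-smoothness of $f$, then complete the square against $\|\yy_{t+1}-\yy\|^2$, and finally take conditional expectation to invoke Lemma~\ref{lemma:bound_variance_2}. The three building-block inequalities are: (i) from $\xx_t = \prox_{\gamma g}(2\zz_t - \yy_t - \gamma\vv_t)$ and Lemma~\ref{lemma:prox_characterization}, $\gamma^{-1}(2\zz_t - \yy_t - \gamma\vv_t - \xx_t) \in \partial g(\xx_t)$, whence $g(\xx) \geq g(\xx_t) + \gamma^{-1}\langle \xx - \xx_t,\, 2\zz_t - \yy_t - \gamma\vv_t - \xx_t\rangle$; (ii) from $\zz_t = \prox_{\gamma h}(\yy_t)$ together with Lemma~\ref{lemma:conjugate_inverse}, $\uu_t \defas \gamma^{-1}(\yy_t - \zz_t) \in \partial h(\zz_t)$ is equivalent to $\zz_t \in \partial h^*(\uu_t)$, so $h^*(\uu) \geq h^*(\uu_t) + \langle \uu - \uu_t,\, \zz_t\rangle$; (iii) from smoothness and convexity of $f$, $f(\xx_t) - f(\xx) \leq \langle \nabla f(\zz_t),\, \xx_t - \xx\rangle + \tfrac{L_f}{2}\|\xx_t - \zz_t\|^2$.

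Summing (i)--(iii) with $\mathcal{L}(\xx,\uu) = f(\xx) + g(\xx) + \langle \xx, \uu\rangle - h^*(\uu)$, the $\pm\langle\vv_t,\xx_t-\xx\rangle$ terms cancel, leaving
\begin{align*}
\mathcal{L}(\xx_t,\uu) - \mathcal{L}(\xx,\uu_t) &\leq \langle \nabla f(\zz_t) - \vv_t,\, \xx_t - \xx\rangle + \tfrac{L_f}{2}\|\xx_t - \zz_t\|^2 \\
&\quad + \gamma^{-1}\bigl[\langle \xx_t - \xx,\, \zz_t - \yy_{t+1}\rangle + \langle \xx_t - \zz_t,\, \yy - \xx\rangle + \langle \zz_t - \xx,\, \yy_t - \zz_t\rangle\bigr],
\end{align*}
where we used $\yy_{t+1} = \yy_t + \xx_t - \zz_t$ to rewrite $2\zz_t - \yy_t - \xx_t = \zz_t - \yy_{t+1}$. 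Multiplying by $2\gamma$ and adding the identity $\|\yy_{t+1}-\yy\|^2 - \|\yy_t-\yy\|^2 = \|\xx_t-\zz_t\|^2 + 2\langle \xx_t-\zz_t,\,\yy_t-\yy\rangle$, I would then exploit $\xx_t = \yy_{t+1} - \gamma\uu_t$ and $\xx = \yy - \gamma\uu$ to rewrite every bracketed inner product in $\yy$-coordinates. The $\uu$- and $\uu_t$-contributions cancel pairwise, and the bracket combined with $2\langle \xx_t-\zz_t,\,\yy_t-\yy\rangle$ collapses to $-2\|\xx_t-\zz_t\|^2$, producing
\begin{equation*}
2\gamma\bigl[\mathcal{L}(\xx_t,\uu) - \mathcal{L}(\xx,\uu_t)\bigr] + \|\yy_{t+1}-\yy\|^2 \leq \|\yy_t-\yy\|^2 + 2\gamma\langle \nabla f(\zz_t) - \vv_t,\, \xx_t - \xx\rangle + (\gamma L_f - 1)\|\xx_t-\zz_t\|^2.
\end{equation*}

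Taking $\Econd$ and noting that $\zz_t,\xx,\yy$ are measurable before sampling $i$ while $\Econd\vv_t = \nabla f(\zz_t)$, the cross term satisfies $\Econd\langle \nabla f(\zz_t) - \vv_t,\, \xx_t - \xx\rangle = \Econd\langle \nabla f(\zz_t) - \vv_t,\, \yy_{t+1} - \yy_t\rangle$. Young's inequality $2\langle \aa,\boldsymbol{b}\rangle \leq 2\|\aa\|^2 + \tfrac12\|\boldsymbol{b}\|^2$ with $\aa = \gamma(\nabla f(\zz_t)-\vv_t)$ and $\boldsymbol{b} = \yy_{t+1}-\yy_t$ bounds it by $2\gamma^2\Econd\|\nabla f(\zz_t)-\vv_t\|^2 + \tfrac12\Econd\|\xx_t-\zz_t\|^2$; the residual coefficient on $\|\xx_t-\zz_t\|^2$ is then non-positive under the stated step-size condition and can be dropped. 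Finally, substituting Lemma~\ref{lemma:bound_variance_2} to bound $\Econd\|\nabla f(\zz_t)-\vv_t\|^2$ by $(1+\eta)\Econd\|\nabla\xi_i(\zz_t) - \nabla\xi_i(\xx^\star)\|^2 + 2(1+\eta^{-1})L_f H_t$ yields exactly the coefficients $2\gamma^2(1+\eta)$ and $4\gamma^2(1+\eta^{-1})L_f$ of the claim. The delicate step is the algebraic cancellation in paragraph two: only by rewriting every difference purely in terms of $\yy_t,\yy_{t+1},\yy,\uu_t,\uu$ do the $\uu$-contributions annihilate, leaving the single $-2\|\xx_t-\zz_t\|^2$ that subsequently absorbs both the smoothness remainder and the Young residual.
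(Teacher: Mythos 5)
Your proof follows the paper's route almost exactly: the same three building-block inequalities (subgradient characterization of the two proximal steps, convexity plus $L_f$-smoothness of $f$ evaluated at $\zz_t$), the same decomposition of the Lagrangian gap, and the same three-point/completing-the-square identity in $\yy$-coordinates. Your intermediate display
\begin{equation*}
2\gamma\bigl[\mathcal{L}(\xx_t,\uu) - \mathcal{L}(\xx,\uu_t)\bigr] + \|\yy_{t+1}-\yy\|^2 \leq \|\yy_t-\yy\|^2 + 2\gamma\langle \nabla f(\zz_t) - \vv_t,\, \xx_t - \xx\rangle + (\gamma L_f - 1)\|\xx_t-\zz_t\|^2
\end{equation*}
is correct and is exactly Eq.~\eqref{eq:square_identity} of the paper before the $(\gamma L_f-1)$ term is dropped. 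The algebraic collapse of the bracket to $-2\|\xx_t-\zz_t\|^2$ checks out.

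The gap is in the last step. You reduce the cross term to $\Econd\langle \nabla f(\zz_t)-\vv_t,\yy_{t+1}-\yy_t\rangle$ (correct, since $\Econd\vv_t=\nabla f(\zz_t)$ and $\zz_t-\xx$ is deterministic) and then apply Young's inequality $2\langle\aa,\boldsymbol{b}\rangle\le 2\|\aa\|^2+\tfrac12\|\boldsymbol{b}\|^2$. This leaves a residual $+\tfrac12\Econd\|\yy_{t+1}-\yy_t\|^2$, so the total coefficient on $\Econd\|\xx_t-\zz_t\|^2$ is $\gamma L_f-\tfrac12$, which is non-positive only for $\gamma\le 1/(2L_f)$ --- not for the full range $\gamma\le 1/L$ claimed by the lemma (at $\gamma=1/L_f$ you are left with $+\tfrac12\Econd\|\xx_t-\zz_t\|^2$ that you cannot absorb without degrading the $2\gamma^2$ constant). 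The paper avoids this entirely: it introduces the full-gradient point $\widetilde{\xx}_t = \prox_{\gamma g}(2\zz_t-\yy_t-\gamma\nabla f(\zz_t))$, uses $\Econd\langle\vv_t-\nabla f(\zz_t),\widetilde{\xx}_t-\xx\rangle=0$ to decouple, and then Cauchy--Schwarz plus the $\gamma$-Lipschitzness of $\xx\mapsto\prox_{\gamma g}(2\zz_t-\yy_t-\gamma\,\cdot\,)$ gives $\Econd\langle\vv_t-\nabla f(\zz_t),\xx_t-\xx\rangle\le\gamma\,\Econd\|\vv_t-\nabla f(\zz_t)\|^2$ with no quadratic residual in $\yy_{t+1}-\yy_t$, so the coefficient $(\gamma L_f-1)$ can be dropped for all $\gamma\le 1/L_f$. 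Your argument proves the lemma only for $\gamma\le 1/(2L_f)$ (which does cover the step size $1/(3L_f)$ actually used downstream, so the final theorem is unaffected), but as stated the lemma needs the paper's decoupling trick or an explicit tightening of the step-size hypothesis.
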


\begin{proof}
By the convexity and the $L$-smoothness inequality, $f$ verifies the following inequalities for an arbitrary $\xx$:
\begin{align}
  &f(\zz_t) - f(\xx)\leq \langle \nabla f(\zz_t), \zz_t - \xx\rangle \label{eq:f_convex}\\
  &f(\xx_t) - f(\zz_t)\leq \langle \nabla f(\zz_t), \xx_t - \zz_t \rangle + \frac{L}{2}\|\zz_t - \xx_t\|^2~\\
  &f(\xx_t) - f(\xx)\leq \langle \nabla f(\zz_t), \xx_t - \xx \rangle + \frac{L}{2}\|\zz_t - \xx_t\|^2~,
\label{eq:convex_and_l_smooth}
  \end{align}
  where the last equation is derived from adding the previous two.
We now derive inequalities for $g$ and $h^*$. From the subdifferential characterization of the proximal operator (Lemma~\ref{lemma:prox_characterization}), the update $\zz_t = \prox_{\gamma h}(\yy_t) $  implies the inclusion
\begin{equation}
    \uu_{t} = \frac{1}{\gamma}(\yy_t - \zz_t) \in \partial h(\zz_{t}) ~\implies \zz_{t} \in \partial h^*(\uu_{t})
\end{equation}
where the implication is a consequence of the Fenchel-Young inequality, see e.g. \citep[Proposition 16.10]{bauschke2017convex} or \citep[Proposition 11.3]{rockafellar1998variational}. Similarly, the update $\xx_{t} = \prox_{\gamma g}(2 \zz_t - \yy_t - \gamma \vv_t)$ in its turn gives the inclusion
  \begin{align}
  &\frac{1}{\gamma}(2 \zz_t - \yy_t - \gamma\vv_t - \xx_{t}) \in \partial g(\xx_{t})
\end{align}
By convexity of $g$ and $h^*$ we then have the inequalities
\begin{align}
    h^*(\uu_{t}) - h^*(\uu) &\leq \langle \zz_{t}, \uu_{t} - \uu \rangle ~.\label{eq:ineq_g}\\
    g(\xx_t) - g(\xx) &\leq \frac{1}{\gamma} \langle \zz_t  - \xx_t, \xx_t - \xx\rangle - \langle \uu_t +\vv_t, \xx_t - \xx \rangle\label{eq:ineq_h}
\end{align}
Adding \eqref{eq:convex_and_l_smooth} and \eqref{eq:ineq_h} we obtain
\begin{align}
f(\xx_t) + g(\xx_t) - f(\xx) - g(\xx) &\leq \frac{1}{\gamma}\langle \zz_t  - \xx_t, \xx_t - \xx\rangle+ \frac{L}{2}\|\zz_t - \xx_t\|^2 - \langle \uu_t, \xx_t - \xx \rangle \nonumber \\
&\qquad + \langle \vv_t - \nabla f(\zz_t), \xx_t - \xx\rangle\label{eq:primal_subopt0}
\end{align}
Using these, we can now write the following sequence of inequalities for the Lagrangian suboptimality
\begin{align}
\mathcal{L}(\xx_t, \uu_t) - \mathcal{L}(\xx, \uu_t) &= f(\xx_t) - f(\xx) + g(\xx_t) - g(\xx) + \langle \xx_t - \xx, \uu_t\rangle\\
&\stackrel{\eqref{eq:primal_subopt0}}{\leq}  \frac{1}{\gamma}\langle \zz_t  - \xx_t, \xx_t - \xx\rangle+ \frac{L}{2}\|\zz_t - \xx_t\|^2  + \langle \vv_t - \nabla f(\zz_t), \xx_t - \xx\rangle\label{eq:primal_suboptimality}\\
\mathcal{L}(\xx_t, \uu) - \mathcal{L}(\xx_t, \uu_t) & = h^*(\uu_t) - h^*(\uu) + \langle \xx_t, \uu - \uu_t\rangle \stackrel{\eqref{eq:ineq_g}}{\leq} \langle \zz_t - \xx_t, \uu_t - \uu\rangle\label{eq:dual_suboptimality}
\end{align}
Adding these two last equations we have
\begin{align}
  &\mathcal{L}(\xx_t, \uu) - \mathcal{L}(\xx, \uu_t) \leq \frac{1}{\gamma}\langle\zz_t - \xx_t, \xx_t- \xx\rangle+ \frac{L}{2}\|\zz_t - \xx_t\|^2   + \langle \vv_t - \nabla f(\zz_t), \xx_t - \xx\rangle\nonumber\\
  &\qquad + \langle \zz_t - \xx_t, \uu_t - \uu\rangle\\
  &~= \frac{1}{\gamma}\langle\zz_t - \xx_t, \xx_t + \gamma \uu_t - \xx - \gamma \uu\rangle + \frac{L}{2}\|\zz_t - \xx_t\|^2  + \langle \vv_t - \nabla f(\zz_t), \xx_t - \xx\rangle\\
  &~=\frac{1}{\gamma}\langle \boldsymbol \yy_t - \yy_{t+1}, \yy_{t+1} - \yy \rangle + \frac{L}{2}\|\yy_t - \yy_{t+1}\|^2  + \langle \vv_t - \nabla f(\zz_t), \xx_t - \xx\rangle\\
  &\qquad \text{(using $\yy_{t+1} - \yy_t = \xx_t - \zz_t$)}\nonumber\\
  &~=\frac{1}{2\gamma}\|\yy_t - \yy\|^2 + \left( \frac{L}{2} - \frac{1}{2\gamma}\right)\|\yy_{t+1} - \yy_t\|^2 - \frac{1}{2\gamma}\|\yy_{t+1} - \yy\|^2 \nonumber\\
  &\qquad  + \langle \vv_t - \nabla f(\zz_t), \xx_t - \xx\rangle\quad, \\
  &~\leq \frac{1}{2\gamma}\|\yy_t - \yy\|^2 - \frac{1}{2\gamma}\|\yy_{t+1} - \yy\|^2   + \langle \vv_t - \nabla f(\zz_t), \xx_t - \xx\rangle\quad,
  \label{eq:square_identity}
\end{align}
where the second equality comes from the definition of $\uu_t, \yy_{t+1}$ and $\yy$ and in the last equality we have applied the identity $2\langle \boldsymbol a, \boldsymbol b\rangle = {\|\boldsymbol{a} + \boldsymbol{b}\|^2} - \|\boldsymbol a\|^2 - \|\boldsymbol b\|^2$. In the last inequality we have used the assumption $\gamma \leq 1/L$.

We will now upper bound the last term. For this, we introduce the variable $\widetilde{\xx}$, which represents the step in $\xx$ that would be taken if we used the full gradient rather than the \SAGA\ gradient approximation:
\begin{equation}
\widetilde{\xx}_t \defas \prox_{\gamma g}(2 \zz_t - \yy_t - \gamma \nabla f(\zz_t)) \quad.
\end{equation}
Taking expectations on this last quantity we have
\begin{align}
  \Econd\langle \vv_t - \nabla f(\zz_t), \xx_t - \xx \rangle
  &=  \Econd\langle \vv_t - \nabla f(\zz_t), \xx_t - \widetilde{\xx}_t \rangle + \Econd\langle \vv_t - \nabla f(\zz_t), \widetilde{\xx}_t - \xx_t \rangle \\
  &\leq \Econd\|\vv_t - \nabla f(\zz_t)\| \|\xx_t - \widetilde{\xx}_t\| + \Econd\langle \vv_t - \nabla f(\zz_t), \widetilde{\xx}_t - \xx_t \rangle \qquad \\
  &\qquad\text{ (Cauchy-Schwarz)}\nonumber\\
  &=  \Econd\|\vv_t - \nabla f(\zz_t)\| \|\xx_t - \widetilde{\xx}_t \|\\
  &  \qquad \text{ (since $\widetilde{\xx}_t$ does not depend on $i$ and $\Econd \vv_t = \nabla f(\zz_t)$)}\nonumber\\
  &=  \Econd\|\vv_t - \nabla f(\zz_t)\| \|\prox_{\gamma g}(2 \zz_t - \yy_t - \gamma \vv_t)\nonumber \\
  &\qquad\qquad\qquad\qquad- \prox_{\gamma g}(2 \zz_t - \yy_t - \gamma \nabla f(\zz_t)) \|\\
  &\leq \gamma \Econd \|\vv_t - \nabla f(\zz_t)\|\|\vv_t - \nabla f(\zz_t)\|^2\\
  &\qquad\qquad \text{ (nonexpansiveness of $\prox$)}\nonumber\\
  &= \gamma \Econd \|\vv_t - \nabla f(\zz_t)\|^2 \\
  &\leq \gamma (1 + \eta) \Econd\|\nabla\xi_i (\zz_t) - \nabla\xi_i(\xx^\star)\|^2 + 2 \gamma (1+\eta^{-1}) L_{f}H_t~,
\end{align}
where the last inequality follows by Lemma~\ref{lemma:bound_variance_2} for dense update variants.
Taking conditional expectations in \eqref{eq:square_identity}, plugging this bound, multiplying everything by $2\gamma$ and rearranging we obtain
\begin{equation}
\begin{aligned}
&2 \gamma \Econd(\mathcal{L}(\xx_t, \uu) - \mathcal{L}(\xx, \uu_t)) + \Econd\|\yy_{t+1} - \yy\|^2 \\
&\qquad \leq \Econd\|\yy_{t} - \yy\|^2 + 2\gamma^2 (1 +  \eta) \Econd\|\nabla\xi_i (\zz_t) - \nabla\xi_i(\xx^\star)\|^2 + 4\gamma^2 (1+ \eta^{-1})L_{f}H_t
\end{aligned}
\end{equation}
which is the desired bound.
\end{proof}

\begin{mdframed}
\begin{customtheorem}{\ref{thm:sublinear_convergence}}
Let $\overline{\xx}_t$ denote the averaged (also known as ergodic) iterate, i.e., $\overline{\xx}_t = {(\sum_{k=0}^t\xx_k )/(t+1)}$ and $\overline{\uu}_t = (\sum_{k=0}^t\uu_k )/(t+1)$. Then \VRTOS\ (Algorithm~\ref{alg:vrtos}) methods converge for any step size $\gamma \leq 1 / (3L_f)$, and for $\gamma = 1/ (3L_f), t \geq 0$ we have the following bound for all $(\xx, \uu) \in \dom g \times \dom h^*$:
\begin{equation}
\EE\left[\mathcal{L}(\overline\xx_t, \uu) - \mathcal{L}(\xx, \overline\uu_t)\right] \leq \frac{10 n }{q(t+1)}\left[\frac{3 L_{f}q}{20 n}\|\yy_0 - \yy\|^2 + \frac{3 L_{f}q}{2n}\|\yy_0 - \yy^\star\|^2 + H_0  \right]~,
\end{equation}
with $\yy = \xx + \gamma \uu$, $\yy^\star \in \Fix(\boldsymbol{G}_\gamma)$.

Furthermore, if $h$ is $\beta_h$-Lipschitz we have the following rate in terms of the primal objective:
\begin{equation}
     \mathcal{P}(\overline{\xx}_t) - \mathcal{P}(\xx^\star) \leq \frac{10 n }{q(t+1)}\left[\frac{6 L_{f}q}{20 n}\|\zz_0 - \xx^\star\|^2 + \frac{3 L_{f}q}{2n}\|\yy_0 - \yy^\star\|^2 + \frac{q}{15 n L_f}\beta_h^2 + H_0  \right]~.
\end{equation}
\end{customtheorem}
\end{mdframed}
\begin{proof}
We define the following Lyapunov function:
\begin{equation}
    W_t(\xx, \uu) \defas V_t + \lambda \|\yy_t - \yy\|^2 ~\text{ with $\yy = \xx + \gamma \uu$}~.
\end{equation}
We will now aim to bound $\Econd W_{t+1} - W_t$
 by using Lemma \ref{lemma:lyapunov} with $\mu=0$ and $s=1$, we have for $\Econd V_{t+1}$ that
\begin{equation}\label{eq:sublinear_vt}
    \begin{aligned}
        \Econd V_{t+1} &\leq V_t  +  \left({2L_{f}c\gamma^2}(1 + \beta^{-1}) - \frac{q}{n}\right) H_t \\
        &\qquad + c\left({\gamma^2}(1 + \beta) - \frac{\gamma}{L}\right)\Econd\| \nabla\xi_i(\zz_t) - \nabla \xi_i(\xx^\star)\|^2\\
&\qquad + \left(- 2c\gamma + \frac{q}{n}\right) B_f(\zz_t,\xx^\star)
    \end{aligned}
\end{equation}

while for the last term from Lemma~\ref{theorem:saddle_point_inequality_1} we have
\begin{equation}\label{eq:sublinear_y}
\begin{aligned}
\Econd\|\yy_{t+1} - \yy\|^2 &\leq \|\yy_t - \yy\|^2 - 2 \gamma\Econd (\mathcal{L}(\xx_t, \uu) - \mathcal{L}(\xx, \uu_t))\\
&\qquad\qquad + 2 \gamma^2 (1 + \eta) \Econd\|\nabla\xi_i (\zz_t) - \nabla\xi_i(\xx^\star)\|^2 + 4 \gamma^2(1+\eta^{-1}) L_{f}H_t
\end{aligned}
\end{equation}
Adding \eqref{eq:sublinear_vt} and \eqref{eq:sublinear_y} times $\lambda$ we have
\begin{equation}
\begin{aligned}
    &\Econd W_{t+1}(\xx, \uu) -W_{t}(\xx, \uu) \leq  - 2 \lambda \gamma\Econd (\mathcal{L}(\xx_t, \uu) - \mathcal{L}(\xx, \uu_t)) \\
    &\qquad \qquad +  \left[4\gamma^2 \lambda (1 + \eta^{-1})L_{f}+ {2L_f c\gamma^2}(1 + \beta^{-1}) - \frac{q}{n}\right] H_t\\
    &\qquad \qquad \qquad+ \gamma\left[2\lambda(1 + \eta)\gamma + {\gamma}(1 + \beta)c - \frac{c}{L_f}\right]\Econd\| \nabla\xi_i(\zz_t) - \nabla \xi_i(\xx^\star)\|^2\\
    &\qquad \qquad \qquad \qquad+ \left[- 2c\gamma + \frac{q}{n}\right] B_f(\zz_t,\xx^\star)
\end{aligned}
\end{equation}
We can now verify that with the coefficients
\begin{equation}
\gamma = \frac{1}{3 L_f},\quad c= \frac{3 L_{f}q}{2 n},\quad\beta = \eta = \frac{3}{2}, \quad\lambda = \frac{3 L_{f}q}{20 n}~,
\end{equation}
all the square brackets are negative and so we have
\begin{equation}
    \Econd W_{t+1}(\xx, \uu) -  W_{t}(\xx, \uu) \leq- \frac{q}{10 n} \Econd (\mathcal{L}(\xx_t, \uu) - \mathcal{L}(\xx, \uu_t))
\end{equation}
These expectations are conditional on information from step $t$. Taking full expectations (with respect to all randomness) we have
\begin{equation}
    \EE W_{t+1}(\xx, \uu) - \EE  W_{t}(\xx, \uu)\leq - \frac{q}{10 n} \EE\left[\mathcal{L}(\xx_t, \uu) - \mathcal{L}(\xx, \uu_t) \right]~,
\end{equation}
where all expectations are unconditional. Adding the previous inequality from $0$ to $t$, the terms in $W_{t}$ cancel each other and we have
\begin{equation}
    \frac{q}{10 n}\EE \left[\sum_{k=0}^{t} \mathcal{L}(\xx_k, \uu) - \mathcal{L}(\xx, \uu_k) \right] \leq  W_0(\xx, \uu) - \EE W_{t+1}(\xx, \uu)~.
\end{equation}
We can drop the last term since it is always negative. Note that the function $ \mathcal{L}(\xx_t, \uu) - \mathcal{L}(\xx, \uu_t)$ is convex in $\xx_t$ and $\uu_t$ and so we can apply Jensen's inequality. This gives
\begin{equation}
\begin{aligned}
    \EE \left[ \mathcal{L}(\overline\xx_t, \uu) - \mathcal{L}(\xx, \overline\uu_k) \right] &\leq  \frac{10 n}{q(t+1)}W_0(\xx, \uu)\\
    &= \frac{10 n }{q(t+1)}\left[\frac{3 L_{f}q}{20 n}\|\yy_0 - \yy\|^2 + \frac{3 L_{f}q}{2n}\|\yy_0 - \yy^\star\|^2 + H_0  \right]~,
\end{aligned}
\end{equation}
which proves the first result of the theorem.

For the second result, let $\widehat\uu \defas \argmin_{\uu} \mathcal{L}(\overline\xx_{t+1}, \uu)$ and $(\xx^\star, \uu^\star)$ be a saddle point of $\mathcal{L}$. Then $\mathcal{L}(\overline\xx_{t+1}, \widehat\uu) = P(\overline\xx_{t+1})$ and $\mathcal{L}(\xx^\star, \uu^\star) = P(\xx^\star)$ by definition of Fenchel dual.

At the same time, by the $\beta_h$-Lipschitz assumption on $h$ implies that the norm of every element in $\dom h^*$ is bounded by $\beta_h$ (see e.g., \citep[Corollary 13.3.3]{rockafellar1997convex}). This way we bound
\begin{align}
\|\yy_0 - \yy\|^2 &= \|\zz_0 + \gamma \uu_0 - \yy\|^2 \leq 2\|\zz_0 - \xx\|^2 + 2 \gamma^2\|\uu_0 - \uu\|^2\\
&\leq 2 \|\zz_0 - \xx\|^2 + 4 \gamma^2 \beta_h^2
\end{align}
Plugging this bound into the last inequality with $\xx = \xx^\star$
\begin{equation}
  P(\overline\xx_{t+1}) - P(\xx^*) \leq \frac{10 n }{q(t+1)}\left[\frac{6 L_{f}q}{20 n}\|\zz_0 - \xx^\star\|^2 + \frac{3 L_{f}q}{2n}\|\yy_0 - \yy^\star\|^2 + \frac{q}{15 n L_f}\beta_h^2 + H_0  \right]~.
\end{equation}

\end{proof}

\clearpage
\subsection{Sublinear convergence -- sparse algorithms}\label{apx:sublinear_sparse}

\begin{mdframed}
\begin{customtheorem}{\ref{thm:sublinear_convergence_sparse}}
\TheoremSubSparse
\end{customtheorem}
\end{mdframed}
\begin{proof}
    Using the Lyapunov inequality of Lemma~\ref{lemma:lyapunov} for non-strongly convex functions, i.e., with $\mu=0$ we have
\begin{equation}\label{eq:proof_sublinear_0}
    \begin{aligned}
        \Econd V_{t+1} &\leq V_t + c(s - 1)\Econd \|\yy_{t+1} - \yy_t\|^2 +  \left(\frac{2L_f c\gamma^2}{s}(1 + \beta^{-1}) - \frac{q}{n}\right) H_t \\
        &\qquad + c\left(\frac{\gamma^2}{s}(1 + \beta) - \frac{\gamma}{L_f}\right)\Econd\| \nabla\xi_i(\zz_t) - \nabla \xi_i(\xx^\star)\|^2\\
&\qquad \qquad + \left( - 2c\gamma + \frac{q}{n}\right) B_f(\zz_t,\xx^\star)
    \end{aligned}
\end{equation}
where $V_t$ and $H_t$ are as defined in Eq. \eqref{eq:def_lyapunov}. For notational convenience, we define $\boldsymbol{R}$ as the operator residual $\boldsymbol{R}(\yy) = \boldsymbol{G}_\gamma(\yy) - \yy$, and denote by $i$ the random index selected at the $t$-th iteration.
The term $\|\yy_{t+1} - \yy_t\|^2$ can be bounded in term of the gradient mapping using the following inequality, where $\widetilde{\xx} = \prox^{\DD^{-1}}_{\gamma h}(2 \zz_t - \yy_t - \DD \nabla f(\zz_t))$ is the value of $\xx_t$ had we used the full gradient instead of the stochastic approximation:
  \begin{align}\label{eq:proof_sublinear_1}
      \|\PP_i\boldsymbol{R}(\yy_t)\|^2 &= \|\yy_{t+1} - \yy_t + \PP_i\boldsymbol{R}(\yy_t) - \yy_{t+1} + \yy_t\|^2 \\
      &\leq 2 \|\yy_{t+1} - \yy_t\|^2 + 2\|\PP_i\boldsymbol{R}(\yy_t) - \yy_{t+1} + \yy_t\|^2 \\
      &= 2 \|\yy_{t+1} - \yy_t\|^2 + 2\|\boldsymbol{R}(\yy_t) - \yy_{t+1} + \yy_t\|_{(i)}^2 \\
      &\qquad \text{(since both $\PP_i\boldsymbol{R}(\yy_t)$ and $\yy_{t+1} + \yy_t$ have support in $T_i$)}\nonumber\\
      &= 2 \|\yy_{t+1} - \yy_t\|^2 + 2\|\boldsymbol{G}_\gamma(\yy_t) - \yy_{t+1}\|_{(i)}^2 \\
      &\qquad \text{(by definition of $\boldsymbol{R}$)}\nonumber\\
      &= 2 \|\yy_{t+1} - \yy_t\|^2 + 2\|\yy_t - \zz_t + \widetilde{\xx}_t - (\yy_{t} - \zz_t + \xx_t)\|_{(i)}^2 \\
      &\qquad \text{(by definition of $\boldsymbol{G}_\gamma$ and $\yy_{t+1}$)}\nonumber\\
      &= 2 \|\yy_{t+1} - \yy_t\|^2 + 2 \|\widetilde{\xx}_t - \xx_t\|_{(i)}^2 ~.
  \end{align}
  For the last term, we further have
    \begin{align}
      \|\xx_t - \widetilde{\xx}_t\|_{(i)}^2 &= \|\prox^{\DD^{-1}}_{\gamma h}(2 \zz_t - \yy_t - \gamma \vv_t) - \prox^{\DD^{-1}}_{\gamma h}(2 \zz_t - \yy - \gamma \DD \nabla f(\zz_t))\|_{(i)}^2\\
      &\leq \gamma^2\|\vv_t - \DD \nabla f(\zz_t)\|^2_{(i)}\qquad \text{ (by Lemma~\ref{lemmma:block_nonexpansive})} \\
    &\leq 2 \gamma^2 \Econd \|\nabla \xi_i(\zz_t) - \nabla \xi_i(\xx^\star)\|^2 + 4 \gamma^2 L_f  H_t\quad\text{ (by Lemma \ref{lemma:bound_variance_2})}
  \end{align}
  Combining this into Eq.~\eqref{eq:proof_sublinear_1} and tacking expectation, we have:
  \begin{align}
    &\Econd\|\PP_i\boldsymbol{R}(\yy_t)\|^2 \leq 2 \Econd\|\yy_{t+1} - \yy_t\|^2 +  4 \gamma^2 \Econd \|\nabla \xi_i(\zz_t) - \nabla \xi_i(\xx^\star)\|^2 + 8 \gamma^2 L_f  H_t \\
    \iff - &\Econd \|\yy_{t+1} - \yy_t\|^2 \leq - \frac{1}{2}\Econd\|\PP_i\boldsymbol{R}(\yy_t)\|^2+ 2 \gamma^2 \Econd \|\nabla \xi_i(\zz_t) - \nabla \xi_i(\xx^\star)\|^2 + 4 \gamma^2 L_f  H_t
  \end{align}
Plugging this last inequality in Eq.~\eqref{eq:proof_sublinear_0} gives
\begin{equation}
    \begin{aligned}
        \Econd V_{t+1} &\leq V_t + \frac{c(s - 1)}{2}\Econd\|\PP_i\boldsymbol{R}(\yy_t)\|^2 +  \left[2 c (s-1)\gamma^2 L_f + \frac{2L_f c\gamma^2}{s}(1 + \beta^{-1}) - \frac{q}{n}\right] H_t \\
        &\qquad + c\left[ (s-1)\gamma^2 + \frac{\gamma^2}{s}(1 + \beta) - \frac{\gamma}{L_f}\right]\Econd\| \nabla\xi_i(\zz_t) - \nabla \xi_i(\xx^\star)\|^2\\
&\qquad \qquad + \left[ - 2c\gamma + \frac{q}{n}\right] B_f(\zz_t,\xx^\star)
    \end{aligned}
\end{equation}
We can verify that with the following values
\begin{equation}
    \gamma = \frac{1}{3 L_f}~,\quad \beta = 3/2~,\quad s = 8/10~,\quad c = \frac{2 L q}{n}~,
\end{equation}
all the square brackets in the previous expression are non-positive and so we have
\begin{align}
  &\Econd V_{t+1} - V_t \leq- \frac{L q}{5 n}\Econd \|\PP_i \boldsymbol{R}(\yy_t)\|^2\\
  &\qquad\qquad = - \frac{L q}{5 n}\Econd \|\boldsymbol{R}(\yy_t)\|_{(i)}^2 = - \frac{L q}{5 n} \|\boldsymbol{R}(\yy_t)\|_{\DD^{-1}}^2\\
  &\qquad\qquad \leq - \frac{L q}{5d_{\max} n  } \|\boldsymbol{R}(\yy_t)\|^2\\
  &\iff V_t  - \Econd V_{t+1} \geq \frac{ L q}{5 d_{\max} n} \|\boldsymbol{R}(\yy_t)\|^2
\end{align}
Summing from 0 to $t$ and chaining expectations have
$$
\EE V_0 - \EE V_{t+1} \geq \frac{L q}{5 d_{\max} n} \sum_{k=0}^{k}\|\boldsymbol{R}(\yy_k)\|^2 \geq  \frac{L q}{5 d_{\max} n} \sum_{k=0}^{t}\|\boldsymbol{R}(\yy_k)\|^2 \geq  \frac{L q (t+1)}{5 d_{\max} n }\min_{k=0, \ldots, t} \|\boldsymbol{R}(\yy_t)\|^2
$$
Dropping $\EE V_{t+1}$ (since it is positive)  and taking the square root we have
\begin{equation}
    \min_{k=0, \ldots, t} \|\boldsymbol{R}(\yy_t)\|^2 \leq \frac{5 d_{\max} n}{L q (t+1)} V_0~,
\end{equation}
The final results follows then by definition of $\boldsymbol{R}$.
\end{proof}

\clearpage

\section{Learning with multiple penalties}\label{apx:optimizing_multiple_penalties}

In this section we review some cases in which we can compute the scaled proximal operator $\prox_{\gamma h}^{\DD^{-1}}$ for some diagonal matrix $\DD$. We refer to \citep{pedregosa2018adaptive} for a discussion on how common penalties such as $\ell_1$ trend filtering, multidimensional total variation, overlapping group lasso, etc. can be split as a sum of proximal terms.

\subsection{$\ell_1$ norm}

We consider the case in which $g$ is the $\ell_1$ or Lasso penalty, $g(\xx) \defas \|\xx\|_1$.
Since this function is fully separable, its resolvent can be computed component-wise. Hence, the reweighting matrix $\DD$ can be associated with the step size $\gamma$ and using the known prox for the Lasso penalty we obtain
$$
\left[(\text{Id} + \gamma \DD \partial g)^{-1} \xx\right]_j = \Big(1 - \frac{[\DD]_{j, j}\gamma}{|\xx_j|}\Big)_+ \xx_j
$$

\subsection{Fused lasso}

The fused lasso penalty, also known as \mbox{1-di}mensional total variation, is defined as the $\ell_1$ norm of the differences between consecutive coefficients. Although in this case direct methods have been developed to compute its proximal operator~\citep{condat2013direct,johnson2013dynamic}, there still exist advantages in splitting the penalty.
In particular, existing direct approaches involve dense updates due to the non-separability of the penalty. However, by splitting the penalty into constituents that are block-separable, it is possible to optimize with this penalty while only performing sparse updates. The split is the following:
\begin{align}
  \|\xx\|_\text{FL} &\defas \textstyle\sum_{i=1}^{p-1} |\xx_i - \xx_{i+1}|= \underbrace{\textstyle\sum_{i=1}^{r}|\xx_{2 i - 1} - \xx_{2 i}|}_{\defas g(\xx)} + \underbrace{\textstyle\sum_{i=1}^{s}|\xx_{2 i} - \xx_{2 i+1}|}_{\defas h(\xx)},
\end{align}
with $r = \lfloor {(p-1)}/{2}\rfloor$ and $s = \lfloor {p}/{2}\rfloor$.
We note that both $g$ and $h$ are block-separable with blocks of size 2.
Furthermore, it is possible to compute the scaled proximal operator of $\prox_{\gamma g}^Q(\xx)$ in closed form.
The advantages of  \VRTOS\ with this formulation on large and sparse problems is demonstrated experimentally in \S\ref{scs:experiments}.

Both functions $g$ and $h$ are block-separable with blocks of size two. Hence it is sufficient to specify the proximal operator on a vector of size two. Let $\xx = (\xx_1, \xx_2)$ and $\DD = \text{diag}(\qq_1, \qq_2)$. Then we have
\begin{equation}
\prox^{\DD^{-1}}_{\gamma g}(\xx) = \begin{cases}
\big(\xx_1 - \nicefrac{\gamma}{\qq_1}, \xx_2 + \nicefrac{\gamma}{\qq_2}\big) &\!\!\text{if } \xx_1 - \gamma/ \qq_1 \geq \xx_2 + \gamma \qq_2\\
\big(\xx_1 + \nicefrac{\gamma}{\qq_1}, \xx_2 - \nicefrac{\gamma}{\qq_2}\big) &\!\!\text{if } \xx_1 + \gamma/\qq_1 \leq \xx_2 - \gamma \qq_2\\
\big(\frac{\qq_1 \xx_1 + \qq_2 \xx_2}{\qq_1 + \qq_2}, \frac{\qq_1 \xx_1 + \qq_2 \xx_2}{\qq_1 + \qq_2}\big)  &\!\!\text{otherwise }~.
\end{cases}
\end{equation}
\begin{proof}
  Let $(\zz_1, \zz_2) = \prox^{\DD^{-1}}_{\gamma g}(\xx_1, \xx_2)$. The first order optimality conditions applied to this problem give
  $$
  \frac{\DD^{-1}}{\gamma}((\xx_1, \xx_2) - (\zz_1, \zz_2)) \in \partial |\zz_1 - \zz_2|
  $$
  We now perform a dichotomy of cases. Suppose first $\zz_1 - \zz_2 > 0$. Then the above becomes
  $$
  \frac{\DD^{-1}}{\gamma}((\xx_1, \xx_2) - (\zz_1, \zz_2)) = (1, -1)
  $$
  from where the solution is given by $ (\xx_1 - {\gamma}/\qq_1, \xx_2 + {\gamma}/\qq_2)$, but only if $\xx_2 - \gamma/\qq_1 \geq \xx_2 + \gamma/\qq_2$, otherwise the assumption $\zz_1 - \zz_2 < 0$ would be violated.

  Repeating this for $\zz_1 - \zz_2 < 0$ and $\zz_1 - \zz_2=0$ yields the above rule.
\end{proof}

%

\clearpage

\section{Pseudocode for the extension to $k$ proximal terms}\label{scs:implementation_extension}

The extension of the proposed method to $k$ proximal terms consists in running Algorithm~\ref{alg:vrtos} or \ref{alg:vrtos_sparse} on particular values of $g$ and $h$. Some tricks can help to reduce the memory usage of this algorithm, reducing the storage of vectors $\xx, \zz$ and $\vv_t$ from $k \times p$ to $p$. In this  subsection we provide the pseudocode for runnning Sparse \VRTOS\ on its $k$-proximal terms extension.

As in \S\ref{scs:extension_k_terms} we consider an optimization problem of the form
\begin{empheq}[box=\mybluebox]{equation}\tag{OPT-$k$}
\begin{aligned}
    &\minimize_{\XX \in \RR^{k \times p}}\, f(\overline{\XX}) + \textstyle\sum_{j=1}^k g_j(\XX_j) + h(X)\,,\nonumber \\
    &\text{ with } f(\xx) = \textstyle\frac{1}{n} \sum_{i=1}^n \psi_i(\xx) + \omega(\xx)~,
\end{aligned}
\end{empheq}
where $h(X) = \imath\{\XX_1 = \cdots = \XX_k\}$. We will first detail how the scaled proximal operator of $h$ can be computed

\begin{lemma}\label{lemma:projection_kterms}
Let $h(X) = \imath\{\XX_1 = \cdots = \XX_k\}$. Then we have that
\begin{gather}
\prox_{\gamma h}^{\DD^{-1}}(\xx) = \zz {\mathbf{1}_k}^T \text{ for $\zz \in \RR^p$ defined as }\\
 \zz_j = \left(\sum_{i=1}^k a_{i, j} \XX_{i, j}\right) / \left(\sum_{i=1}^k a_{i, j}\right) \text{ with  $a_{i, j} = \DD^{-1}_{i p + j, i p + j}$.}
\end{gather}
\end{lemma}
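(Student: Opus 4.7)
The strategy is to reduce the scaled proximal operator to a weighted least-squares problem and then exploit the diagonal structure of $\DD$ to decouple it into $p$ scalar problems. By the definition of the scaled proximal operator,
\[
\prox_{\gamma h}^{\DD^{-1}}(\XX) = \argmin_{\ZZ \in \RR^{k \times p}} \Big\{ h(\ZZ) + \frac{1}{2\gamma}\|\XX - \ZZ\|_{\DD^{-1}}^2 \Big\}.
\]
Since $h$ is the indicator of the linear subspace $\{\ZZ : \ZZ_1 = \cdots = \ZZ_k\}$, every feasible $\ZZ$ can be written uniquely in the form $\ZZ = \zz\,\mathbf{1}_k^T$ for some $\zz \in \RR^p$. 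The problem thus reduces to the unconstrained minimization of $\|\XX - \zz\mathbf{1}_k^T\|_{\DD^{-1}}^2$ over $\zz \in \RR^p$.

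Next I would expand the $\DD^{-1}$-weighted norm entry by entry, using that $\DD$ is diagonal and the weights along the diagonal correspond to the coordinates of $\XX$ under the ambient vectorization convention, so that $\|\XX - \zz\mathbf{1}_k^T\|_{\DD^{-1}}^2 = \sum_{i=1}^k \sum_{j=1}^p a_{i,j}(\XX_{i,j} - \zz_j)^2$ with $a_{i,j} = \DD^{-1}_{ip+j,\,ip+j}$. The crucial observation is that this sum decouples over the index $j$: for each $j \in \{1,\ldots,p\}$ one obtains the independent scalar problem
\[
\min_{\zz_j \in \RR} \; \sum_{i=1}^k a_{i,j}(\XX_{i,j} - \zz_j)^2,
\]
which is a strongly convex one-dimensional weighted least-squares (strict convexity follows from $a_{i,j} > 0$, since $\DD$ is positive definite by assumption).

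Finally, setting the derivative with respect to $\zz_j$ to zero yields the first-order optimality condition $\sum_{i=1}^k a_{i,j}(\zz_j - \XX_{i,j}) = 0$, whose unique solution is $\zz_j = \big(\sum_{i=1}^k a_{i,j}\XX_{i,j}\big)/\big(\sum_{i=1}^k a_{i,j}\big)$, exactly as claimed. Collecting this across $j$ and recalling that the minimizer of the original problem is $\ZZ = \zz\,\mathbf{1}_k^T$ gives the formula in the statement. There is no real obstacle here beyond bookkeeping: the only subtle point is matching the diagonal entries of $\DD^{-1}$ with the double index $(i,j)$, which is handled by the definition $a_{i,j} = \DD^{-1}_{ip+j,\,ip+j}$; everything else is a direct consequence of separability.
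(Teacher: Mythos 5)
Your proof is correct and takes essentially the same route as the paper's: parametrize the feasible set as $\zz\,\mathbf{1}_k^T$, use the diagonal structure of $\DD^{-1}$ to decouple the weighted least-squares objective into $p$ independent scalar problems, and solve each by the first-order condition to get the weighted average. Your explicit remark on strict convexity from $a_{i,j}>0$ is a minor addition the paper leaves implicit; otherwise the arguments coincide.
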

\begin{proof}

Let $\mathcal{S}$ denote the domain of $h$, i.e., $\mathcal{S} \defas \{\XX \in \RR^{k \times p}| \XX_1\!=\!\XX_2\!=\!\cdots\!=\!\XX_k\}$. Computing this proximal operator consists by definition of scaled proximal operator in solving the following optimization problem
\begin{align}
  &\argmin_{\ZZ \in S} \|\vec(\ZZ) - \vec(\XX)\|_{\DD^{-1}}^2 = \argmin_{\zz \in \RR^{p}} \|\vec(\zz \boldsymbol{1}_k^T) - \vec(\XX)\|^2_{\DD^{-1}}
\end{align}
The problem is then separable along the components of $\zz$, and the $j$-th component is the solution to the problem
\begin{equation}
\argmin_{\zz_j \in \RR} \sum_{i=1}^k a_{i, j} (\zz_j -  \XX_{j, i})^2 \text{ with $a_{i, j} = \DD^{-1}_{i p + j, i p + j}$}~,
\end{equation}
and whose solution is
\begin{equation}
    \zz_j = \left(\sum_{i=1}^k a_{i, j} \XX_{i, j}\right) / \left(\sum_{i=1}^k a_{i, j}\right)
\end{equation}
\end{proof}

Before introducing the algorithm, we make the following definitions:
\begin{itemize}
    \item Let $\mathcal{B}_j$ denote the blocks of $g_j$, that is, $g_j$ can be decomposed block coordinate-wise as $g_j(\xx) = \sum_{B \in \mathcal{B}_j} g_{j, B}([\xx]_B)$.
    \item Let $T_{i, j}$ denote the extended support of $\nabla \psi_i$ in $\mathcal{B}_j$, that is, $T_{i, j} \defas \{B: \text{supp}(\nabla f_i) \cap B \neq \varnothing, \,B\in\mathcal{B}_j \}$.
    \item Let $S_i$ be the set of coordinates that are at least in one block of one of the extended supports, that is, $S_i \defas \{c: c \in B \text{ for any } B \in T_{i, j} \text{ and any } j=1, \ldots, k\}$.
\end{itemize}

With respect to Algorith~\ref{alg:vrtos_sparse}, compute the $\zz$ update at the end of the algorithm instead of the beginning to efficiently use the extended support.

\begin{algorithm}[h]
 \KwIn{$\YY_0 \in \RR^{k \times p}$, $\balpha_0 \in \RR^{n \times p}$, $\gamma > 0$}

 {\bfseries Temporary storage}: $\zz_t$, $\vv_t$ and $\xx_t$, all in $\RR^p$

 \KwResult{approximate solution to \eqref{eq:obj_fun_k} }

\For{$t=0, 1, \ldots $ }{

Sample $i \in \{1, \ldots, n\}$ uniformly at random

Compute $\nabla \psi_i(\zz_t)$

\For{$j=1, \ldots, k$}{
$[\vv_{t}]_{T_{i, j}} = \frac{1}{k}[\nabla \psi_i(\zz_t)\!-\!\balpha_{i, t} + \DD^{(j)}(\overline{\balpha}_t + \nabla \omega(\zz_t))]_{T_i}$

$[\xx_t]_{T_{i, j}} = [\prox_{\gamma \varphi_{i, j}}(2 \zz_t - \yy_t - \gamma \vv_t)]_{T_i}$

$[\YY_{j, t+1}]_{{T_{i, j}}} = [\YY_{j, t} + \xx_t - \zz_t]_{T_i}$

\For{$b \in T_i$}{
$    \zz_{t+1, b} = \left(\sum_{l=1}^n \DD^{(l)}_{b, b} \YY_{l, b}\right) / \left(\sum_{l=1}^n \DD^{(l)}_{b, b}\right)$
}
}

update $\balpha_{t+1}$ according to \eqref{eq:q_memorization}
}

\Return $\prox^{\DD^{-1}}_{\gamma h}(\yy_t)$

 \caption{Sparse \VRTOS\ for $k$ proximal terms}\label{alg:vrtos_sparse_k}
\end{algorithm}

\vspace{1em}

\clearpage

\section{Experiments}\label{apx:experiments}

\subsection{Implementation aspects}\label{apx:implementation}

We review some implementation details for the proposed algorithms

\paragraph{Update of memory terms.} In a practical implementation of the \SAGA\ variants, the vector $\overline{\balpha}_t= ({1}/{n})\sum_{i=1}^n\balpha_{i, t}$ is also stored in memory and updated incrementally as $\overline\balpha_{t+1} = \overline\balpha_t + (\balpha_{i, t+1} - \balpha_{i, t})/n$.

\paragraph{Compressed memory storage.} Like other \SAGA\ variants, \VRTOS\ with the \SAGA-like update of memory terms requires to store a table of partial gradients. In the general case, this requires a matrix of size $n \times p$. However, for linearly-parametrized loss functions this can be compressed into a matrix of size $n$. Linearly-parametrized functions are of the form $\psi_i(\xx) = l_i(\aa_i^T \xx)$ for some input dataset $\{\aa_i\}_{i=1}^n$ and some real functions $\{l_i\}_{i=1}^n$. Deriving with respect to $\xx$ one obtains $\nabla \psi_i(\xx) = \aa_i l_i'(\aa_i^T \xx)$. In this expression only the factor $l_i'(\aa_i^T \xx)$ depends on the iterate $\xx$, and it is a scalar. Hence, we only need to store this scalar and we can construct the partial gradient at run time by multiplying by the vector $\aa_i$. The memory cost is hence reduced to a list of $n$ scalars.

\paragraph{Initialization of $\balpha_0$.} The original \SAGA\ algorithm of~\citep{defazio2014saga} required to initialize the memory terms as $\balpha_{i, t}= \nabla \psi_i(\zz_0)$. This is no longer required in our algorithm, in which these memory terms can be initialized arbitrarily. In fact, we recommend to initialize them to zero. This is convenient and makes the gradient estimate $\vv_t$ close to the  \SGD\ estimate during the first iterations.

\paragraph{Initialization of $\yy_0$.} An ``initial guess'' $\yy_0$ must also be provided. From Theorem~\ref{thm:sublinear_convergence} and \ref{apx:fixed_point_characterization}, we have that $\yy_t$ converges towards $\xx^\star + \gamma \uu^\star$, where $\uu^\star$ is a minimizer of the dual objective $\mathcal{D}$. Hence, the ideal initialization for this vector is $\xx_0 + \gamma \uu_0$, where $\xx_0$
is an initial guess for \eqref{eq:opt_problem} and $\uu_0$ is an initial guess for the dual problem. However, we rarely have an initial guess for the dual problem, in which case one can set $\uu_0 = \boldsymbol{0}$.

\paragraph{\textsc{Sgd-Tos}.} Following \citep{yurtsever2016stochastic}, we used a step size of the form $\gamma/t$ in this case, where $t$ is the number of iterations.

\paragraph{Software.} All methods are implemented in Python. Numba was used to speed up the inner loops of stochastic methods (\VRTOS, \SAGA, \ProxSVRG\ and \textsc{Stos}).
For the Adaptive Three Operator splitting method we used the implementation provided by the authors\footnote{http://openopt.github.io/copt/}.

\subsection{Overlapping Group Lasso Benchmarks}\label{apx:overlapping_benchmarks}

In this subsection we giver some details on the benchmarks reported in \S\ref{scs:experiments} that were omitted from the main text.

The associated objective function that we consider is
\vspace{-0.0em}$$
\frac{1}{n} \sum_{i=1}^n \log\big(1 + \exp(- {b}_i \aa_i^\intercal \xx)\big) + \frac{\lambda_1}{2} \|\xx\|^2 + \lambda_2 \|\xx\|_\text{OGL},
\vspace{-0.0em}$$
where $\aa_i \in \mathbb{R}^p$ and $b_i \in \{-1,+1\}$ are the data samples.

The overlapping group lasso penalty $\|\cdot\|_\text{OGL}$ is defined as the sum over the group norms. Given a collection of (potentially overlapping) groups $\mathcal{G}$, the overlapping group penalty is given by
\begin{equation}
    \|\xx\|_\text{OGL} = \sum_{g \in \mathcal{G}}\|[\xx]_g\|_2~.
\end{equation}

In our comparison the groups are chosen to have 10 variables with 2 variables of overlap between two successive groups: $\{\{1, \ldots, 10\}, \{8, \ldots, 18\}, \{16, \ldots, 26\}, \ldots\}$.

Although this penalty can be expressed as a sum of only two proximal terms, we instead use the formulation in \S\ref{scs:extension_k_terms} in order to avoid computing the scaled proximal operator and to better leverage the sparsity in the dataset.

\paragraph{Extra experiments.} We also run the same benchmark on the KDD12 dataset (149,639,105 samples and 54,686,452 features) but was not shown in the main paper due to lack of space. The results are displayed below and are consistent with the rest of the experiments.

\begin{figure}[h]
    \centering
    \includegraphics[width=0.5\linewidth]{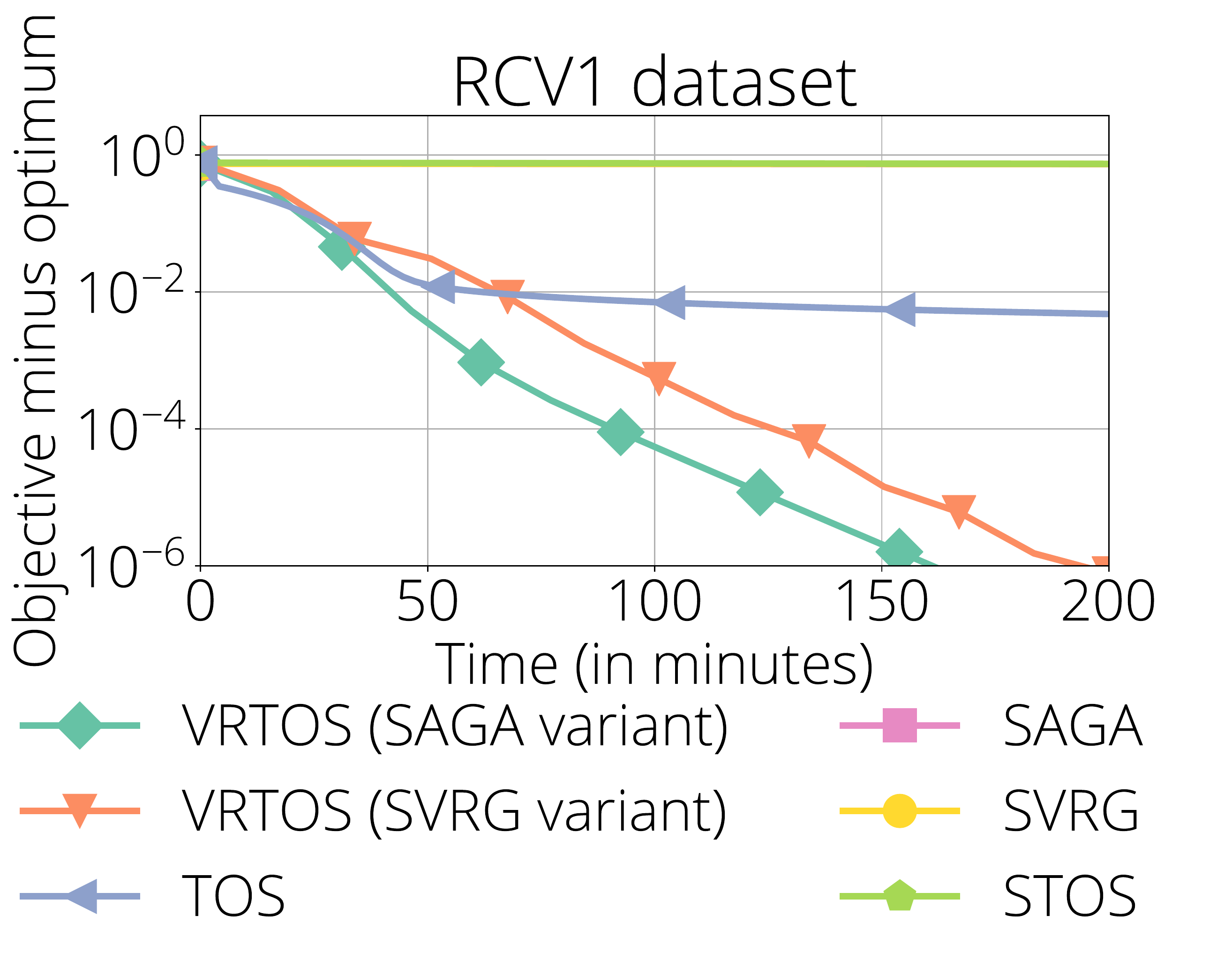}
    \caption{Benchmarks on the KDD12 dataset. }
    \label{fig:extra_exp}
\end{figure}

\end{document}